 \theoremstyle{plain}
 \newtheorem{thm}{Theorem}[section]
 \newtheorem{cor}[thm]{Corollary}
 \newtheorem{lem}[thm]{Lemma}
 \newtheorem{propo}[thm]{Proposition}
 \theoremstyle {definition}
 \newtheorem{defn}[thm]{Definition}
 \theoremstyle{remark}
\newcommand{\op}[1]{\operatorname{#1}}
\newcommand{\tif}{\text{if }}
\newcommand{\tand}{\text{ and }}
\newcommand{\tfor}{\text{for }}
\newcommand{\ga}{\alpha}
\newcommand{\gb}{\beta}
\newcommand{\gd}{\delta}
\newcommand{\gep}{\varepsilon}
\newcommand{\gf}{\varphi}
\newcommand{\gga}{\gamma}
\newcommand{\gh}{\eta}
\newcommand{\gl}{\lambda}
\newcommand{\gm}{\mu}
\newcommand{\go}{\omega}
\newcommand{\gs}{\sigma}
\newcommand{\gt}{\tau}
\newcommand{\gy}{\psi}
\newcommand{\gF}{\Phi}
\newcommand{\gO}{\Omega}
\newcommand{\gY}{\Psi}
\newcommand{\D}[1]{{\mathbb{#1}}} 
\newcommand{\refS}[1]{Section ~\ref{#1}} 
\newcommand{\refT}[1]{Theorem ~\ref{#1}}
\newcommand{\refL}[1]{Lemma ~\ref{#1}}
\newcommand{\refD}[1]{Definition ~\ref{#1}}
\newcommand{\refP}[1]{Proposition ~\ref{#1}}
\newcommand{\refE}[1]{Equation ~\eqref{#1}}
\newcommand{\ol}{\overline}
\newcommand{\diam}{\operatorname{diam}\,}
\newcommand{\interior}{\operatorname{int}\,}
\newcommand{\eps}{\varepsilon}
\newcommand{\diams}{\op{diam}_{\hat{\D{C}}}}
\newcommand{\ds}{\op{d}_{\hat{\D{C}}}}
\begin{document}
\title{Hairy Cantor sets}
\author{Davoud Cheraghi}
\address{Department of Mathematics, Imperial College London, London, SW7 2AZ, UK}
\email{d.cheraghi@imperial.ac.uk}
\author{Mohammad Pedramfar}
\address{Department of Mathematics, Imperial College London, London, SW7 2AZ, UK}
\email{m.pedramfar15@imperial.ac.uk}
\keywords{}
\subjclass[2010]{54F65 (Primary), 57R30, 37F10 (Secondary)}
\date{\today}

\begin{abstract}
We introduce a topological object, called \textit{hairy Cantor set}, which in many ways enjoys the universal 
features of objects like Jordan curve, Cantor set, Cantor bouquet, hairy Jordan curve, etc. 
We give an axiomatic characterisation of hairy Cantor sets, and prove that any two such objects in the plane 
are ambiently homeomorphic.  

Hairy Cantor sets appear in the study of the dynamics of holomorphic maps with infinitely many renormalisation structures. 
They are employed to link the fundamental concepts of polynomial-like renormalisation by Douady-Hubbard 
with the arithmetic conditions obtained by Herman-Yoccoz in the study of the dynamics of analytic circle diffeomorphisms.
\end{abstract}

\maketitle

\section{Introduction}
We introduce a topological object which enjoys a similar level of universal features as objects like 
Jordan curve, Cantor set, Cantor bouquet, hairy Jordan curve, Lelek fan, etc  
\cite{Jo1893,Moore1919,Lelek60,BuOv1990,Nadler92,HoOv2016}. \nocite{Veb1905}
These are topological objects uniquely determined, up to ambient homeomorphisms, by some simple axioms. 
These objects frequently appear in dynamical systems, in particular, as the Julia sets or the attractors of holomorphic 
maps on complex spaces.
The object presented here has a delicate fine-scale structure, for instance, it is not locally connected. 
However, due to the way these emerge in iterations of holomorphic mappings, they turn out ubiquitous.

We start by presenting simple examples of our favourite object. 

\begin{defn}\label{D:strainght-hairy-Cantor}
A set $X$ in the plane $\D{R}^2$ is called a \textbf{straight hairy Cantor set}, if there are a Cantor set of points 
$C\subset \D{R}$ and a function $l: C \to [0, +\infty)$ such that
\[ X = \{(x, y) \in \D{R}^2 \mid x \in C, 0 \leq y \leq l(x) \},\] 
and the function $l$ satisfies the following properties: 
\begin{itemize}
\item[(i)] the set of $x \in C$ with $l(x) > 0$ is dense in $C$; 
\item[(ii)] if $x \in C$ is an end point \footnote{$x \in C$ is called an end point, if there is $\gd>0$ such that 
either $(x, x+\gd)\cap C=\emptyset$ or $(x-\gd, x)\cap C=\emptyset$.}, then 
\[l(x)=0= \limsup_{t\to x, \, t\in C} l(t);\]
\item[(iii)] if $x\in C$ is not an end point, then 
\[\limsup_{t\to x^-,\, t\in C} l(t) = \limsup_{t\to x^+,\, t\in C} l(t) = l(x).\]
\end{itemize}
\end{defn}

By property (ii) in the above definition, $l=0$ on a dense subset of $C$. Also, by properties (ii) and (iii), any straight 
hairy Cantor set is compact. In particular, $\sup_{x\in C} l(x)$ is finite. 
On the other hand, as every arc in $X$ is accumulated from both sides by arcs, 
any straight hairy Cantor set is not locally connected. 

It is not immediately clear, but true, that for any open interval $I$ with $I \cap C \neq \emptyset$, 
the closure of the set $\{l(x)\mid x\in I \cap C\}$ forms a connected interval in $\D{R}$.
Due to such features, straight hairy Cantor sets in the plane are topologically unique. 

\begin{thm}\label{T:straight-hairy-homeomorphic}
All straight hairy Cantor sets in the plane are ambiently homeomorphic.
\end{thm}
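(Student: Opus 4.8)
The plan is to show that any straight hairy Cantor set $X$ is ambiently homeomorphic to a fixed model, built over the standard middle-thirds Cantor set, and to do this by constructing the ambient homeomorphism as a uniform limit of homeomorphisms of $\D{R}^2$ adapted to a nested sequence of ``Cantor-like'' partitions. First I would fix notation: given the straight hairy Cantor set $X$ over $(C,l)$, enumerate the bounded complementary intervals (gaps) of $C$ in $\D{R}$, say $G_1, G_2, \dots$, ordered so that $|G_1| \geq |G_2| \geq \cdots$, and use the endpoints of the first $2^n-1$ gaps to cut $[\min C, \max C]$ into $2^n$ closed ``blocks'' $I^n_1 < I^n_2 < \cdots < I^n_{2^n}$, each meeting $C$. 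This is the combinatorial skeleton: the blocks at level $n$ refine those at level $n-1$ exactly as the dyadic Cantor construction does, and the same is done on the model side. The first task is to verify that $\diam(I^n_j) \to 0$ uniformly in $j$ as $n \to \infty$ — this follows because $C$ is a Cantor set (no interior), so the gap lengths are summable-controlled enough that the largest surviving block shrinks.

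Next I would establish the key analytic input, the remark stated just before the theorem: for every open interval $I$ with $I \cap C \neq \emptyset$, the closure of $l(I \cap C)$ is an interval $[0, M_I]$ (it contains $0$ by property (ii) since endpoints of $C$ are dense and have $l=0$; and it is connected because properties (ii)--(iii) force $l$ to have a ``Darboux-type'' behaviour — any value between $0$ and a large value $l(x)$ is realised as a $\limsup$ from one side, hence approximated, and by (iii) the sup over a subinterval is attained in the closure). Granting this, each block $I^n_j$ carries a well-defined ``height'' $h^n_j := \sup\{ l(x) : x \in I^n_j \cap C\} = \max l(\overline{I^n_j} \cap C)$, finite and attained, and by (iii) these heights are consistent under refinement in the sense that $h^{n-1}_k = \max_{j} h^n_j$ over the children $j$ of $k$. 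The crucial continuity statement — the analogue of ``diam shrinks'' for the hair part — is that $\max_j h^n_j$ need NOT go to $0$, but the oscillation of $l$ within each block, suitably measured against the accumulation structure, is controlled so that $X$ is recovered as $\bigcap_n N_n$ where $N_n$ is the union over $j$ of the rectangles $\overline{I^n_j} \times [0, h^n_j]$ — i.e. $X = \bigcap_n N_n$. One shows $X \subseteq N_n$ trivially, and $\bigcap_n N_n \subseteq X$ because a point $(x,y)$ in all $N_n$ has $x \in \bigcap_n \overline{I^n_{j(n)}} \cap \dots = \{$a point of $C\}$ and $y \leq h^n_{j(n)} \to l(x)$ by property (iii) applied carefully (here one must use that $x$ is a two-sided accumulation point, or handle the endpoint case via (ii)).

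Then I would build the homeomorphism. Let $Y$ over $(D, m)$ be a second straight hairy Cantor set with its own blocks $J^n_j$ and heights $g^n_j$. At level $n$ define a homeomorphism $\Phi_n : \D{R}^2 \to \D{R}^2$ that is the identity outside a large disc, maps each rectangle $\overline{I^n_j} \times [0, h^n_j + \epsilon_n]$ affinely-in-$x$ and affinely-in-$y$ onto $\overline{J^n_j} \times [0, g^n_j + \epsilon_n]$ (with a matching collar so the pieces glue continuously across block boundaries — this is a routine piecewise-linear interpolation in the plane, using that the blocks are ordered intervals on a line), and is ``compatible'' with $\Phi_{n-1}$ in the overlap. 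The sequence $\Phi_n$ is arranged so that $\Phi_n$ and $\Phi_{n-1}$ differ only inside $N_{n-1}$-neighbourhoods whose diameters tend to $0$ (using block diameters $\to 0$ in $x$, and the consistency $h^{n-1}_k = \max h^n_j$ to keep the $y$-distortion from accumulating); hence $\Phi = \lim \Phi_n$ exists, is a homeomorphism of $\D{R}^2$ (uniform limit of homeomorphisms that is also injective in the limit — one checks injectivity via the block-nesting), and by construction $\Phi(N_n(X)) = N_n(Y)$ for all $n$, so $\Phi(X) = \Phi(\bigcap N_n(X)) = \bigcap N_n(Y) = Y$.

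The main obstacle I anticipate is the gluing/convergence step: making the level-$n$ homeomorphisms agree well enough on overlaps that the limit is injective and the hair-heights converge to the right values. Getting the $x$-direction right is the standard Cantor-set argument; the genuinely new difficulty is that the ``height function'' $l$ is only upper-semicontinuous-like and the rectangles $\overline{I^n_j} \times [0,h^n_j]$ overshoot $X$ by a definite amount at each finite stage, so one must show this overshoot is squeezed out in the limit precisely using properties (ii) and (iii) — in particular that along any convergent sequence of blocks the heights $h^n_{j(n)}$ converge to $l$ of the limit point. Properly bookkeeping the collars (the $\epsilon_n \to 0$ buffers that let the piecewise maps be genuine homeomorphisms rather than merely continuous) is the technical heart, but is routine once the structure is set up.
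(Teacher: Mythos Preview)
Your overall architecture is right --- nested partitions, block heights $h^n_j = \op{Max}(l, I^n_j)$, approximating rectangles $N_n$, and a limit of piecewise-affine homeomorphisms --- and your claim that $\bigcap_n N_n = X$ via $h^n_{j(n)} \to l(x)$ is correct (this is the paper's \refL{L:upper_sc_limit}). But there is a genuine gap in the convergence step, and it is not a bookkeeping issue with collars.

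The problem is your choice of partition. You take the \emph{natural} gap decomposition on both sides independently: the blocks $I^n_j$ are determined by the gaps of $C$, and the blocks $J^n_j$ by the gaps of $D$. On each block your map $\Phi_n$ scales vertically by roughly $g^n_j / h^n_j$. For the sequence $(\Phi_n)$ to be Cauchy you need, on each child $j$ of a parent $k$,
\[
\frac{g^n_j}{h^n_j} \approx \frac{g^{n-1}_k}{h^{n-1}_k},
\quad \text{equivalently} \quad
\frac{g^n_j / g^{n-1}_k}{h^n_j / h^{n-1}_k} \approx 1.
\]
But with the partitions chosen independently on the $X$ and $Y$ sides, there is no reason whatsoever for these ratios to match: the first gap of $C$ may sit under a tall hair of $X$ and a short hair of $Y$, or vice versa. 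The consistency you cite, $h^{n-1}_k = \max_j h^n_j$, only says one child inherits the parent's height; it says nothing about the other children, and nothing relating $X$ to $Y$. So the vertical distortions $g^n_j/h^n_j$ can oscillate wildly from level to level, and the limit need not be injective (hairs can collapse) or need not carry $X$ onto $Y$.

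The paper's remedy is exactly here. It does \emph{not} use a fixed combinatorial partition. Instead (\refP{P:straight-hairy-partitions-exist}), given an arbitrary fine partition on the $Y$ side, it \emph{chooses} where to cut on the $X$ side so that the ratio above lies in $(1 - 2^{-(n+2)}, 1 + 2^{-(n+2)})$. The existence of such cut points is precisely what requires the key analytic input you identified --- that the ``bump function'' $v \mapsto \op{Max}(l^X, [u, v])$ is \emph{continuous} (\refL{L:bump-functions}), so the intermediate value theorem applies. With this ratio control in hand, the vertical rescalings compose to a genuine homeomorphism (\refP{P:straight-hairy-matching-partitions}). Your proposal invokes the Darboux-type property of $l$ but never actually uses it to \emph{select} the partition; that is the missing idea.
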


We are pursuing topological objects in the plane which are ambiently homeomorphic to a straight hairy Cantor set. 
However, for practical reasons, one requires an axiomatic description of such objects.  
Let $X \subset \D{R}^2$, and consider the following axioms: 
\begin{itemize}
\item[($A_1$)] any connected component of $X$ is either a single point or a Jordan arc;
\item[($A_2$)] the closure of the set of point components of $X$ is a Cantor set of points, say $B$;  
\item[($A_3$)] any arc component of $X$ meets $B$ at one of its end points;
\item[($A_4$)] whenever $x_i \to x$ within $X$, the unique arc in $X$ connecting $x_i$ to $B$ 
converges to the unique arc in $X$ connecting $x$ to $B$, with the convergence in the Hausdorff topology; 
\item[($A_5$)] for every arc component $\gamma$ of $X$, and every $x$ in $\gamma$ minus the end points of $\gamma$, 
$x$ is not accessible from $\D{R}^2\setminus X$; 
\item[($A_6$)] $X \setminus B$ is dense in $X$; 
\item[($A_6'$)] the set of the end points of the arc components of $X$ is dense in $X$;
\end{itemize}

All of the above axioms, except $A_5$, are well-defined for any metric space. 

\begin{thm}\label{T:uniformization-abstract}
Let $X$ be a compact metric space. If $X$ satisfies axioms $A_1$ to $A_4$, and $A_6'$, then $X$ is 
homeomorphic to a straight hairy Cantor set. 
\end{thm}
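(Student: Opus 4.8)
The plan is to realise $X$ explicitly as a straight hairy Cantor set, building the two coordinates of $\mathbb{R}^2$ separately: a horizontal one coming from a carefully chosen embedding of the Cantor set of base points into $\mathbb{R}$, and a vertical one from a continuous height function along the arc components. First I would record the structural consequences of $A_1$--$A_4$. Let $B$ be the Cantor set of $A_2$. By $A_1$ and $A_3$, every point of $B$ is either a point component of $X$ or the base point (the unique point of $B$) of an arc component, so $B=B_0\sqcup B_1$ where $B_0$ is the set of point components and $B_1$ the set of base points; for $b\in B_1$ let $\gamma_b$ be the arc based at $b$ and $e_b$ its free end point. Sending each $x\in X$ to the base point of its component defines a retraction $\pi\colon X\to B$ whose fibres are the arcs $\gamma_b$ ($b\in B_1$) and the singletons of $B_0$; axiom $A_4$ says precisely that $x\mapsto[\pi(x),x]$ is continuous into the hyperspace of compact subsets of $X$, so in particular $\pi$ is continuous. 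Since end points are dense in $X$ by $A_6'$ and $\pi$ carries the set of free end points onto $B_1$, the set $B_1$ is dense in $B$; and $B_0$ is dense in $B$ by $A_2$.

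Next I would construct a continuous $h\colon X\to[0,\infty)$ with $h^{-1}(0)=B$ that restricts on each $\gamma_b$ to a homeomorphism onto an interval $[0,m_b]$, $m_b=h(e_b)>0$. This is the step in which $A_4$ is used most seriously: it forces the connecting sub-arcs $[\pi(x),x]$ to vary continuously with $x$, so that metric quantities read off them are continuous on $X$, and it is exactly this coherence between the parametrisations of neighbouring arcs that is needed for $h$ to be continuous across $B$ and across the passage from one arc component to an adjacent one; promoting such a quantity, which is a priori only monotone, to one that is strictly monotone along every arc is a further point to settle. Set $l(b)=m_b$ for $b\in B_1$ and $l(b)=0$ for $b\in B_0$. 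Then $l$ is \emph{upper regular}, i.e. $\limsup_{b'\to b,\ b'\in B}l(b')=l(b)$ for every $b$: if $b_i\to b$, then by continuity of $\pi$ and $h$, along a subsequence with $e_{b_i}\to p$ one has $p\in\pi^{-1}(b)$ and $l(b_i)=h(e_{b_i})\to h(p)\le l(b)$, which gives ``$\le$''; by density of free end points one can choose $e_{b_i}\to e_b$, whence $b_i=\pi(e_{b_i})\to b$ and $l(b_i)\to h(e_b)=l(b)$, which gives ``$\ge$''; the same argument at a point of $B_0$ shows $l(b_i)\to0$ there. In particular $\{l>0\}=B_1$ and $\{l=0\}=B_0$ are both dense. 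Finally, $x\mapsto(\pi(x),h(x))$ is a continuous injection of the compact space $X$ into $B\times[0,\infty)$ --- two distinct points lie either in components with distinct $\pi$-values or in a common arc, where $h$ separates them --- hence an embedding; it remains only to straighten the $B$-coordinate.

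The crucial step, and the main obstacle, is to embed $B$ in $\mathbb{R}$ in the right way: I want a homeomorphism $\phi$ of $B$ onto a Cantor set $C\subset\mathbb{R}$ such that $\hat l:=l\circ\phi^{-1}\colon C\to[0,\infty)$ satisfies conditions (i), (ii) and (iii) in the definition of a straight hairy Cantor set. Condition (i) is automatic since $\{l>0\}=B_1$ is dense; but (ii) requires the (countably many) end points of $C$ to be $\hat l$-null, and (iii) requires that at every non-end point $c=\phi(b)$ the one-sided limit superiors of $\hat l$ from the left and from the right each equal $\hat l(c)$. I would construct $\phi$ by a recursion producing a nested sequence of finite clopen partitions of $B$ together with a compatible left-to-right placement of the pieces along $\mathbb{R}$, at each stage choosing how to split a piece and in which order to lay down the two halves. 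The bookkeeping has to guarantee: (a) that the pieces destined to contribute end points of $C$ are chosen inside $B_0$ --- possible because $B_0$ is dense; and (b) that when a piece is split, the pieces carrying tall hairs are distributed to \emph{both} halves and can be steered so as to flank every eventual separating point on both sides, so that no one-sided jump of $\hat l$ survives in the limit. Upper regularity of $l$ is precisely what makes (b) possible, since near any base point $b$ with $l(b)>0$ there is a whole cluster of base points with $l$-value close to $l(b)$, which can be divided between the two halves; carrying this out consistently for all points of $B$ simultaneously, rather than one target point at a time, is where the argument demands genuine care.

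With such a $\phi$ in hand, the map $\Phi\colon X\to\mathbb{R}^2$, $\Phi(x)=\bigl(\phi(\pi(x)),\,h(x)\bigr)$, is a continuous injection of a compact space, hence a homeomorphism onto its image, and that image is exactly $\{(c,y)\in\mathbb{R}^2:c\in C,\ 0\le y\le\hat l(c)\}$, which is a straight hairy Cantor set because $\hat l$ satisfies (i)--(iii) by construction; this proves the theorem. I expect the construction of the height function and, above all, the synchronised choice of linear order on $B$ that eliminates all one-sided discrepancies of $\hat l$ to be the two places requiring real work, the latter being the genuine obstacle.
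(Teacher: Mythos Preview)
Your outline is correct and tracks the paper's proof closely: embed $X$ into $B\times[0,\infty)$ via $x\mapsto(\pi(x),h(x))$, then fix up the linear order on $B$. The paper handles your two ``places requiring real work'' with off-the-shelf tools you may find cleaner than what you sketch. For the height function, it uses a Whitney map $\mu$ on the hyperspace $\op{CL}(X)$ and sets $h(x)=\mu([b(x),x])$; axioms $A_4$ and the strict monotonicity of $\mu$ under inclusion give continuity and strict monotonicity along each arc for free, with no ad hoc work. For the reordering, the paper does \emph{not} attempt to choose $\phi$ so that $\hat l$ satisfies (i)--(iii) directly; instead it takes an arbitrary homeomorphism $g:B\to C$, observes that $Z=\Phi(X)$ is compact with $\{(x,l(x)):l(x)\neq 0\}$ dense (this is where $A_6'$ enters, via a short argument that the peak set already dominates the base set in closure), and then invokes a standalone proposition: any such $Z$ is homeomorphic to a straight hairy Cantor set. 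That proposition is proved by an infinite sequence of piecewise-translation shuffles $H_n$ of $C$, each reordering the pieces of a fine Cantor partition so that the bump function has jumps at most $1/n$, and showing the composite converges to a homeomorphism. This is exactly the ``synchronised choice of linear order'' you flag as the main obstacle, but carried out as a limit of finite rearrangements on an already-embedded set rather than built into $\phi$ from the start; the modular separation makes the bookkeeping considerably lighter.
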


\begin{thm}\label{T:uniformization}
Let $X \subset \D{R}^2$ be a compact set. If $X$ satisfies axioms $A_1$ to $A_6$, then $X$ is ambiently homeomorphic to 
a straight hairy Cantor set. 
\end{thm}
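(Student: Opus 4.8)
The plan is to reduce the statement to the abstract uniformisation of \refT{T:uniformization-abstract}, and then to promote the resulting homeomorphism to an ambient one by a controlled limiting construction. For the reduction, the only hypothesis of \refT{T:uniformization-abstract} not immediately present is axiom $A_6'$, and I claim it follows from $A_1$, $A_5$ and $A_6$ once $X$ is in the plane. First, $X$ has empty interior: if a point of $X$ had a neighbourhood inside $X$, its connected component would not be a point or an arc, contradicting $A_1$. Hence $\D{R}^2\setminus X$ is dense with $\partial(\D{R}^2\setminus X)=X$, and since every boundary point of a planar domain is a limit of boundary points accessible from that domain, the accessible points of $\D{R}^2\setminus X$ are dense in $X$. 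By $A_5$ the accessible points of $X$ all lie in $B\cup\{\text{end points of arc components}\}$ (the base end point of any arc lies in $B$ by $A_3$). If some ball about a point of $X$ contained no end point of an arc component, then in a slightly smaller ball every accessible point would lie in $B$, and density of the accessible points would force $X$ to coincide with $\ol B=B$ there, contradicting $A_6$. Thus $A_6'$ holds, and \refT{T:uniformization-abstract} provides a straight hairy Cantor set $Y$ with a homeomorphism $h\colon X\to Y$. It now suffices to construct an ambient homeomorphism of $\D{R}^2$ — equivalently of $S^2=\D{R}^2\cup\{\infty\}$ fixing $\infty$ — taking $X$ onto \emph{some} straight hairy Cantor set; $h$ serves only to record the combinatorial type of $X$.

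Next I would set up the scaffolding. Fix a defining sequence for the Cantor set $B$, a nested family of clopen pieces $\{B_w\}$ indexed by finite binary words, chosen so that at each generation their left-to-right order in the plane matches the lexicographic order of words; this is possible because, after an ambient isotopy, a planar Cantor set sits on a line. From $A_4$ and $A_6'$ one deduces that the base points of the arc components are dense in $B$, so every $B_w$ contains the base point of some arc component; fix such an arc $\gamma_w$, and list the $\gamma_w$ as $\gamma_1,\gamma_2,\dots$ with every ancestor preceding its descendants. The decisive input, from $A_1$–$A_4$ and compactness of $X$, is that the map $b\mapsto(\text{component of }X\text{ based at }b)$ is uniformly continuous into the Hausdorff topology; together with $\op{diam}B_w\to 0$ as $|w|\to\infty$, this furnishes $\rho(m)\to0$ such that for $|w|=m$ the arc $\gamma_w$ is within Hausdorff distance $\rho(m)$ of $\gamma_{w'}$ for every sufficiently long ancestor $w'$ of $w$.

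The ambient homeomorphism is then obtained as a uniform limit $\varphi=\lim_n\varphi_n$ of homeomorphisms $\varphi_n\colon S^2\to S^2$. The map $\varphi_n$ carries $\gamma_1,\dots,\gamma_n$ to disjoint vertical segments standing on a line, carries a finite clopen partition of $B$ to matching intervals of that line in the planar order, and keeps $X$ within distance $\eps_n$ of this partial picture, with $\eps_n\to0$. Passing from $\varphi_n$ to $\varphi_{n+1}$ comprises (i) refining the partition of $B$ by one generation, a move exactly of the kind proving that all planar Cantor sets are ambiently homeomorphic, and (ii) straightening $\gamma_{n+1}$ to a vertical segment placed beside the segment already carrying the parent of its word. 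For all large $n$, $\varphi_n(\gamma_{n+1})$ is within a tiny Hausdorff distance of that parent segment — by the uniform continuity above and the inductively maintained smallness of $\eps_n$ — so step (ii) is realised by a homeomorphism supported in a disk of arbitrarily small diameter, and one arranges that $\varphi_{n+1}$ and $\varphi_{n+1}^{-1}$ differ from $\varphi_n$ and $\varphi_n^{-1}$, respectively, by less than $2^{-n}$ in the spherical metric; the finitely many initial stages, where no target segment is yet nearby, are absorbed with no smallness requirement. Then $\varphi_n$ and $\varphi_n^{-1}$ converge uniformly, $\varphi$ is a homeomorphism of $S^2$, $\varphi(B)$ is a Cantor set on the line, $\varphi(\gamma_n)$ is a vertical segment, and every other arc component of $X$ — a Hausdorff limit of $\gamma_w$'s based near its base point — is sent by continuity of $\varphi$ to a Hausdorff limit of vertical segments, again a vertical segment. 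Thus $\varphi(X)$ has the form required in \refD{D:strainght-hairy-Cantor}, and being compact it automatically satisfies conditions (ii)–(iii) of that definition, while (i) holds because arc components are dense by $A_6'$; so $\varphi(X)$ is a straight hairy Cantor set.

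The main obstacle is the smallness of the disk in step (ii). As $X$ is not locally connected — each arc being two-sidedly accumulated by arcs — one cannot straighten $\gamma_{n+1}$ inside a disk isolating it from the cluster of arcs pressing on it; the disk must abut the already-placed segments, and one has to show the still-unhandled part of $X$ is by then confined to a thin neighbourhood of those segments. Making this quantitative, and checking that the push both fits in the prescribed thin disk and matches the complement correctly, is where $A_5$ is indispensable: inaccessibility of interior arc points forces the local model of $X$ together with $\D{R}^2\setminus X$ near any arc to be the standard ``Cantor set of parallel arcs'', with no complementary pocket wedged between an arc and its neighbours, so the straightening can be localised and $\D{R}^2\setminus X$ is reproduced block by block. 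A further point, to be settled while choosing the defining sequence and the labelling, is that the planar cyclic order induced on the arc components and on $B$ is consistent with a single straight hairy model — otherwise the ordered matching above is not realisable — and this, too, is encoded in $A_1$–$A_5$.
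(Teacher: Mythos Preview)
Your reduction to $A_6'$ via density of accessible boundary points is sound, but the heart of the proposal---the inductive straightening with shrinking support---has a genuine gap. The crucial claim is that $\varphi_n(\gamma_{n+1})$ lies within a tiny Hausdorff distance of an already-placed vertical segment, so that the correction is supported in a small disk. But axiom $A_4$ does \emph{not} say that components with nearby bases are Hausdorff-close: it only says that $[b(x_i),x_i]\to[b(x),x]$ when $x_i\to x$. Components $b^{-1}(b(x_i))$ with $b(x_i)\to b_0$ can have Hausdorff limits equal to any sub-arc of $b^{-1}(b_0)$, so $\gamma_{n+1}$ may be much shorter than its ``parent'' arc, or---worse---the parent may be short while $\gamma_{n+1}$ is long, in which case a large portion of $\varphi_n(\gamma_{n+1})$ lies nowhere near any placed segment and you have no control over where it sits. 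Even granting closeness, the disk in which you straighten $\gamma_{n+1}$ must avoid the already-placed segments (else you destroy them), yet these segments accumulate on both sides; you assert that $A_5$ forces a ``standard Cantor set of parallel arcs'' local model, but that is precisely the conclusion of the theorem and cannot be invoked as input. Finally, the assertion that compactness of $\varphi(X)$ ``automatically'' gives conditions (ii)--(iii) of \refD{D:strainght-hairy-Cantor} is false; one needs \refL{L:straight_HCS_is_HCS}, which in turn requires knowing $\varphi$ is an ambient homeomorphism so that axiom $A_5$ transfers.

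The paper takes a completely different route that avoids any inductive limiting on arcs. It first builds a \emph{base curve} $\omega$ for $X$ (\refP{P:every_hcs_has_base}), so that $X$ sits in the closure of a simply connected domain $U^*\subset\hat{\D{C}}$. A Riemann map $\Phi:\hat{\D{C}}\setminus\ol{\D{D}(0,1)}\to U^*$ and the theory of prime ends (\refL{L:prime-ends-impressions}) then extend the base map $b$ continuously to a full annular neighbourhood $A_2$ of $X$ (\refP{P:b-is-continuous}), with hyperbolic geodesic rays providing the missing parts of the leaves. The delicate point---that these extended leaves vary continuously---is handled by the Gehring--Hayman theorem (\refP{P:bracket-continuous}). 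A Whitney map on $A_2$ gives a height function, and $z\mapsto(u(b(z)),h(z))$ is then a homeomorphism of a closed Jordan domain, hence extends to the plane in one stroke. The image of $X$ is verified to be straight via \refL{L:straight_HCS_is_HCS}. In short: rather than straightening arc by arc, the paper builds a global product chart $(\text{base},\text{height})$ on a neighbourhood of $X$, and the ambient extension is then a classical fact about Jordan domains.
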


\begin{defn}\label{D:hairy-Cantor-characterization}
A compact set $X \subset \D{R}^2$ is called a \textbf{hairy Cantor set}, if it satisfies the axioms $A_1$ to $A_6$.  
\end{defn}

Combining Theorems \ref{T:straight-hairy-homeomorphic} and \ref{T:uniformization} we conclude that all hairy Cantor sets 
in the plain are ambiently homeomorphic. 

The proof of \refT{T:straight-hairy-homeomorphic} is fairly elementary and is based on a careful analysis of bump-functions 
associated to straight hairy sets. 
In contrast, the proof of \refT{T:uniformization} is more involved. 
The main idea is to identify a one dimensional topological foliation of the plane, equivalent to the foliation of $\D{R}^2$ by 
vertical lines, and each component of $X$ lies in a single leaf of the foliation. 
The (extra portions of) leaves come from hyperbolic geodesics in $\D{C} \setminus X$.
The topology of the leaves/geodesics are studied in the framework of Caratheodory's prime ends \cite{Caratheodory1913}, 
and using classical complex analysis, in paticular, Gehring-Hayman theorem \cite{GeHa62}.
The main technical difficulty is to build a global cross section for the foliation. 
We form a uniformisation of the hairy Cantor set to an straight hairy Cantor set using hyper-spaces and 
Whitney maps \cite{Wh33}. 

Hairy Cantor sets have emerged in the study of the dynamics of holomorphic maps. 
In the counterpart paper \cite{ChePe17} we explain the appearance of hairy Cantor sets as the attractors 
of a wide class of holomorphic maps with infinitely many renormalisation structures.
Roughly speaking, successive perturbations of holomorphic maps with parabolic cycles leads to infinite 
renormalisation structures discovered by Douady and Hubbard in early 80s \cite{DH85}. 
Although such systems were the subject of profound studies in the 80s and 90s, recent significant advances on the topic 
fuels interest in these objects. 
In \cite{ChePe17}, hairy Cantor sets are employed to make a striking connection between the satellite renormalisation structures, and the 
arithmetic conditions which appeared in the study of the linearisation of analytic circle diffeomorphisms by 
Herman-Yoccoz \cite{Her79,Yocc02}.
Also, our results shed light on the remarkable examples of positive area Julia sets by Buff-Ch\'eritat \cite{BC12}, 
the non-locally connected Julia sets by Sorensen \cite{Sor00} and Levin \cite{Lev11}, as well as the infinitely renormalisable 
maps without \textit{a priori} bounds in \cite{ChSh14}. 

Conjecturally, for a dense set of maps on the bifurcation locus of any non-trivial family of rational functions, the attractor of 
the map contains a hairy Cantor set. 
One may compare this to the appearance of Cantor bouquets in the dynamics of exponential 
maps \cite{Dev84,AaOv93,Sixsmith2018}, and hairy Jordan curves in the attractors of holomorphic maps with 
an irrationally indifferent fixed/periodic point \cite{Che17}. 
  

\section{Straight hairy Cantor sets}
The main purpose in this section is to prove \refT{T:straight-hairy-homeomorphic}. 
Before we embark on the proof of that theorem, we present a straight hairy Cantor set, 
which essentially represents the general form of a straight hairy Cantor set. 

\begin{lem}\label{L:example}
There exists a straight hairy Cantor set in the plane.
\end{lem}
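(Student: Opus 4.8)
The plan is to exhibit an explicit Cantor set $C\subset\D{R}$ together with an explicit $l\colon C\to[0,+\infty)$ satisfying (i)–(iii) of \refD{D:strainght-hairy-Cantor}, so that $X=\{(x,y):x\in C,\ 0\le y\le l(x)\}$ is a straight hairy Cantor set. I would take $C$ to be the middle–thirds Cantor set, identifying $x\in C$ with its ternary expansion $x=\sum_{i\ge1}a_i(x)3^{-i}$, $a_i(x)\in\{0,2\}$; the end points of $C$ are then exactly the $x$ for which $(a_i(x))$ is eventually constant. The real work is choosing $l$, and two features are essentially forced on any admissible $l$ and must guide the construction. First, near an end point $x$ every nearby point of $C$ agrees with $x$ only on an initial block of digits and then has an arbitrary tail, so $\limsup_{t\to x}l(t)=0$ can hold only if $l(t)$ is already small as soon as the ternary expansion of $t$ exhibits a long constant run after a bounded prefix; thus $l$ must be controlled by \emph{coarse} data about $t$, not by tail behaviour. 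Second, (iii) says every hair is a two-sided limit of hairs whose heights tend to its own; since a closed subset of $\D{R}$ always has a one-sided extreme point, this forces $l$ to assume a whole interval of values and to be ``self-accumulating'' at each positive level. I would therefore build $l$ as an infimum over scales of bump profiles, roughly $l(x)=\inf_{n\ge1}\psi_n\!\big(\delta_n(x)\big)$, where $\delta_n(x)$ measures how far $x$ sits from the two end points of its generation-$n$ interval and $\psi_n$ is a continuous bump with $\psi_n(0)=0$ that saturates once $\delta_n(x)$ exceeds a threshold tending to $0$ very fast in $n$; the thresholds are tuned so a single coarse scale already detects proximity to any fixed end point, while the points that stay ``central at every scale past some stage'' form a dense set carrying a self-similar cluster of comparable hairs.

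With such an $l$ in hand the routine parts are: (a) $X$ is compact, since $l$ is upper semicontinuous (an infimum of continuous functions) and bounded, so the region under its graph is closed and bounded; (b) $\{l>0\}$ is dense — for any generation-$n$ interval $I$, the point of $I\cap C$ whose remaining ternary digits are $2,0,2,0,\dots$ sits deep inside every deeper interval, and one checks $l>0$ there; (c) $l\equiv 0$ and $\limsup_{t\to x}l(t)=0$ at every end point $x$ — the first because an eventually-constant expansion makes $\delta_n(x)$ vanish along a cofinal set of $n$, the second because if $x$ has expansion $u0^\infty$ then for every $t$ near $x$ the \emph{fixed} scale $n=|u|$ already has $\delta_{|u|}(t)$ tiny, whence $l(t)\le\psi_{|u|}(\delta_{|u|}(t))\to0$. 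Upper semicontinuity also yields at once the inequality $\limsup_{t\to x^{\pm}}l(t)\le l(x)$ needed in (iii) at every non-end-point $x$.

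The main obstacle is the matching lower bound in (iii): for a non-end-point $x$ with $l(x)=c>0$ I must produce $t_j\to x$ from the left, and separately from the right, with $l(t_j)\to c$. The natural candidates copy the first $N_j$ ternary digits of $x$ (with $N_j\to\infty$ chosen so the next digit of $x$ is a $2$, which makes $t_j<x$) and then continue with a neutral tail such as $2,0,2,0,\dots$. One must show that for infinitely many such $N_j$ this does not create a long constant run straddling position $N_j$ that would drive some $\delta_n(t_j)$ — and hence $l(t_j)$ — well below $c$; the dangerous case is exactly an $x$ whose expansion has arbitrarily long constant runs of length comparable to their starting position, and one has to check that for such $x$ either $l(x)=0$ already (so (iii) is vacuous) or the bad scales can be avoided by a sharper choice of $N_j$ and of the appended tail. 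Making this estimate airtight — in effect, putting enough genuine two-sided self-similar clustering into the definition of $l$ that the approximants inherit the value $c$ — is the crux; the construction approaching $x$ from the right is identical after exchanging the digits $0\leftrightarrow2$.
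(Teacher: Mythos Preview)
Your proposal is a plan, not a proof: you explicitly leave the verification of property~(iii) unfinished (``Making this estimate airtight\dots\ is the crux''), and the function $l$ itself is never fully specified --- the bumps $\psi_n$ and the thresholds are described only in words. This is the genuine gap. The difficulty you identify is real: with the middle-thirds Cantor set and $l(x)=\inf_n\psi_n(\delta_n(x))$, changing a single ternary digit of $x$ to produce a one-sided approximant $t$ perturbs $\delta_m(t)$ at \emph{every} coarser scale $m$, not just at the scale of the changed digit, so there is no clean mechanism forcing $l(t)\approx l(x)$. Your sketch of the obstacle (long constant runs in the expansion of $x$) is correct, but you have not shown either that such $x$ necessarily have $l(x)=0$ under your $\psi_n$, or how to choose $\psi_n$ and the appended tails to dodge the bad scales. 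Until those choices are pinned down and the estimate carried through, there is no proof of~(iii).

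The paper sidesteps exactly this difficulty by abandoning the binary Cantor set. It builds $C$ with $2n-1$ subintervals at level $n$ (so the branching grows), giving each $x\in C$ an address $(t_1(x),t_2(x),\dots)$ with $1\le t_n(x)\le 2n-1$, and sets
\[
l(x)=\prod_{n\ge1}\bigl(1-|t_n(x)/n-1|\bigr).
\]
Now changing a \emph{single} coordinate $t_n(x)\mapsto t_n(x)\pm1$ produces a neighbour $p_n$ on the desired side of $x$, and because $l$ is a product over independent coordinates, only one factor changes --- by $O(1/n)$ --- so $l(p_n)/l(x)\to1$ immediately. The growing branching is what makes a one-coordinate perturbation both small in effect and available on both sides (once $n$ is large enough that $t_n(x)$ is not extremal). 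Your infimum-over-scales formulation on a binary tree has no analogue of this decoupling, which is why your step~(iii) stalls.
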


\begin{proof}
For $n\geq 1$, consider the set $\gt_n$ of multi-indexes $(i_1, i_2, \dots, i_n)$ such 
that for each $1 \leq j \leq n$, $1 \leq i_j \leq 2j-1$. 
We aim to define a Cantor set
\[C = \cap _{n\geq 1} \cup_{t\in \gt_n} I_t,\]
where each $I_t$ is a closed interval in $\D{R}$, and $I_t \subseteq I_{t'}$, for $t\in \gt_n$  and $t'\in \gt_m$,  
whenever $n\geq m$ and the first $m$ entries of $t$ are the same as the first $m$ entries of $t'$. 
For $t\in \gt_1$, we let $I_t=[0,1]$. 
Assume that for some $n \geq 2$, $I_t$ is defined for all $t\in \gt_{n-1}$. 
In order to define $I_s$, for $s \in \gt_n$, we divide each $I_t$, $t\in \gt_{n-1}$, into $4n-3$ intervals with equal lengths. 
Then, starting from the left most interval, we alternate discarding one interval, and keeping the next interval, 
until the last interval. 
In other words, with the order on the real line, we keep the odd numbered intervals.  
Thus, we are left with $2n-1$ closed intervals in $I_t$. We label these intervals as $I_s$, such that the first $n-1$ 
entries of $s$ are the same as $t$, and use the last entry to label these $2n-1$ intervals in an order preserving fashion. 
This completes the definition of $I_s$, for $s\in \gt_n$.
With this construction, every $x\in C$ has a unique address $\gt(x) =(t_1(x), t_2(x), t_3(x), \dots)$, 
where 
\[x= \cap_{n\geq 1} I_{t_1(x), t_2(x), \dots , t_n(x)}.\]

By an inductive process, we define a sequence of continuous functions $l_n: \D{R} \to [0, +\infty)$ as follows. 
For $n=0$, let $l_0(x)=1$ for all $x\in \D{R}$. 
Now assume that $l_{n-1}: \D{R} \to [0, +\infty)$ is defined for some $n\geq 1$. 
For $x\in \cup_{t\in \gt_n} I_t$, we let 
\[l_n(x) = (1 - |t_n(x)/n - 1|)l_{n-1}(x).\]
For $x\in (-\infty, 0)$ we let $l_n(x)=l_n(0)$, for $x\in (1, +\infty)$ we let $l_n(x)=l_n(1)$, and on any interval 
$(a,b)$ in the complement of $\cup_{t\in \gt_n} I_t$ with $a$ and $b$ in $\cup_{t\in \gt_n} I_t$, we use a linear 
interpolation of $l(a)$ and $l(b)$. This gives a continuous function $l_n$ on $\D{R}$. 

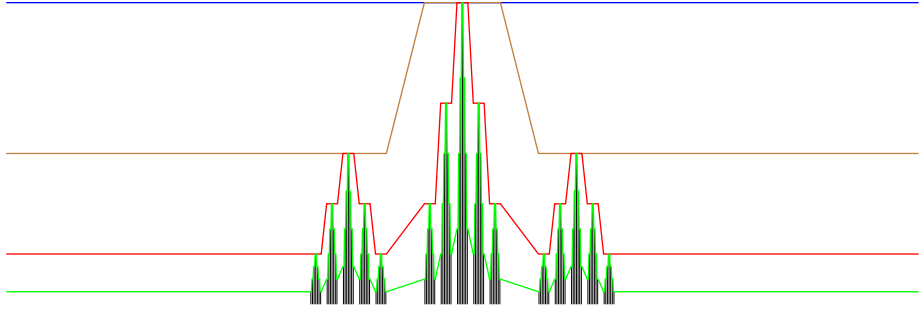
\begin{figure}[ht]
\begin{pspicture}(-4,0)(8,4)
\psset{xunit=4, yunit=4}
\psset{linewidth=0.1pt}

\psline(0.001786, 0.000000)(0.001786, 0.041667)
\psline(0.007143, 0.000000)(0.007143, 0.083333)
\psline(0.012500, 0.000000)(0.012500, 0.125000)
\psline(0.017857, 0.000000)(0.017857, 0.166667)
\psline(0.023214, 0.000000)(0.023214, 0.125000)
\psline(0.028571, 0.000000)(0.028571, 0.083333)
\psline(0.033929, 0.000000)(0.033929, 0.041667)
\psline(0.055357, 0.000000)(0.055357, 0.083333)
\psline(0.060714, 0.000000)(0.060714, 0.166667)
\psline(0.066071, 0.000000)(0.066071, 0.250000)
\psline(0.071429, 0.000000)(0.071429, 0.333333)
\psline(0.076786, 0.000000)(0.076786, 0.250000)
\psline(0.082143, 0.000000)(0.082143, 0.166667)
\psline(0.087500, 0.000000)(0.087500, 0.083333)
\psline(0.108929, 0.000000)(0.108929, 0.125000)
\psline(0.114286, 0.000000)(0.114286, 0.250000)
\psline(0.119643, 0.000000)(0.119643, 0.375000)
\psline(0.125000, 0.000000)(0.125000, 0.500000)
\psline(0.130357, 0.000000)(0.130357, 0.375000)
\psline(0.135714, 0.000000)(0.135714, 0.250000)
\psline(0.141071, 0.000000)(0.141071, 0.125000)
\psline(0.162500, 0.000000)(0.162500, 0.083333)
\psline(0.167857, 0.000000)(0.167857, 0.166667)
\psline(0.173214, 0.000000)(0.173214, 0.250000)
\psline(0.178571, 0.000000)(0.178571, 0.333333)
\psline(0.183929, 0.000000)(0.183929, 0.250000)
\psline(0.189286, 0.000000)(0.189286, 0.166667)
\psline(0.194643, 0.000000)(0.194643, 0.083333)
\psline(0.216071, 0.000000)(0.216071, 0.041667)
\psline(0.221429, 0.000000)(0.221429, 0.083333)
\psline(0.226786, 0.000000)(0.226786, 0.125000)
\psline(0.232143, 0.000000)(0.232143, 0.166667)
\psline(0.237500, 0.000000)(0.237500, 0.125000)
\psline(0.242857, 0.000000)(0.242857, 0.083333)
\psline(0.248214, 0.000000)(0.248214, 0.041667)
\psline(0.376786, 0.000000)(0.376786, 0.083333)
\psline(0.382143, 0.000000)(0.382143, 0.166667)
\psline(0.387500, 0.000000)(0.387500, 0.250000)
\psline(0.392857, 0.000000)(0.392857, 0.333333)
\psline(0.398214, 0.000000)(0.398214, 0.250000)
\psline(0.403571, 0.000000)(0.403571, 0.166667)
\psline(0.408929, 0.000000)(0.408929, 0.083333)
\psline(0.430357, 0.000000)(0.430357, 0.166667)
\psline(0.435714, 0.000000)(0.435714, 0.333333)
\psline(0.441071, 0.000000)(0.441071, 0.500000)
\psline(0.446429, 0.000000)(0.446429, 0.666667)
\psline(0.451786, 0.000000)(0.451786, 0.500000)
\psline(0.457143, 0.000000)(0.457143, 0.333333)
\psline(0.462500, 0.000000)(0.462500, 0.166667)
\psline(0.483929, 0.000000)(0.483929, 0.250000)
\psline(0.489286, 0.000000)(0.489286, 0.500000)
\psline(0.494643, 0.000000)(0.494643, 0.750000)
\psline(0.500000, 0.000000)(0.500000, 1.000000)
\psline(0.505357, 0.000000)(0.505357, 0.750000)
\psline(0.510714, 0.000000)(0.510714, 0.500000)
\psline(0.516071, 0.000000)(0.516071, 0.250000)
\psline(0.537500, 0.000000)(0.537500, 0.166667)
\psline(0.542857, 0.000000)(0.542857, 0.333333)
\psline(0.548214, 0.000000)(0.548214, 0.500000)
\psline(0.553571, 0.000000)(0.553571, 0.666667)
\psline(0.558929, 0.000000)(0.558929, 0.500000)
\psline(0.564286, 0.000000)(0.564286, 0.333333)
\psline(0.569643, 0.000000)(0.569643, 0.166667)
\psline(0.591071, 0.000000)(0.591071, 0.083333)
\psline(0.596429, 0.000000)(0.596429, 0.166667)
\psline(0.601786, 0.000000)(0.601786, 0.250000)
\psline(0.607143, 0.000000)(0.607143, 0.333333)
\psline(0.612500, 0.000000)(0.612500, 0.250000)
\psline(0.617857, 0.000000)(0.617857, 0.166667)
\psline(0.623214, 0.000000)(0.623214, 0.083333)
\psline(0.751786, 0.000000)(0.751786, 0.041667)
\psline(0.757143, 0.000000)(0.757143, 0.083333)
\psline(0.762500, 0.000000)(0.762500, 0.125000)
\psline(0.767857, 0.000000)(0.767857, 0.166667)
\psline(0.773214, 0.000000)(0.773214, 0.125000)
\psline(0.778571, 0.000000)(0.778571, 0.083333)
\psline(0.783929, 0.000000)(0.783929, 0.041667)
\psline(0.805357, 0.000000)(0.805357, 0.083333)
\psline(0.810714, 0.000000)(0.810714, 0.166667)
\psline(0.816071, 0.000000)(0.816071, 0.250000)
\psline(0.821429, 0.000000)(0.821429, 0.333333)
\psline(0.826786, 0.000000)(0.826786, 0.250000)
\psline(0.832143, 0.000000)(0.832143, 0.166667)
\psline(0.837500, 0.000000)(0.837500, 0.083333)
\psline(0.858929, 0.000000)(0.858929, 0.125000)
\psline(0.864286, 0.000000)(0.864286, 0.250000)
\psline(0.869643, 0.000000)(0.869643, 0.375000)
\psline(0.875000, 0.000000)(0.875000, 0.500000)
\psline(0.880357, 0.000000)(0.880357, 0.375000)
\psline(0.885714, 0.000000)(0.885714, 0.250000)
\psline(0.891071, 0.000000)(0.891071, 0.125000)
\psline(0.912500, 0.000000)(0.912500, 0.083333)
\psline(0.917857, 0.000000)(0.917857, 0.166667)
\psline(0.923214, 0.000000)(0.923214, 0.250000)
\psline(0.928571, 0.000000)(0.928571, 0.333333)
\psline(0.933929, 0.000000)(0.933929, 0.250000)
\psline(0.939286, 0.000000)(0.939286, 0.166667)
\psline(0.944643, 0.000000)(0.944643, 0.083333)
\psline(0.966071, 0.000000)(0.966071, 0.041667)
\psline(0.971429, 0.000000)(0.971429, 0.083333)
\psline(0.976786, 0.000000)(0.976786, 0.125000)
\psline(0.982143, 0.000000)(0.982143, 0.166667)
\psline(0.987500, 0.000000)(0.987500, 0.125000)
\psline(0.992857, 0.000000)(0.992857, 0.083333)
\psline(0.998214, 0.000000)(0.998214, 0.041667)

\psset{linewidth=0.5pt}

\psline[linecolor=blue](-1.000000, 1.000000)(0.000000, 1.000000)(1.000000, 1.000000)(2.000000, 1.000000)
\psline[linecolor=brown](-1.000000, 0.500000)(0.000000, 0.500000)(0.250000, 0.500000)(0.375000, 1.000000)(0.625000, 1.000000)(0.750000, 0.500000)(1.000000, 0.500000)(2.000000, 0.500000)
\psline[linecolor=red](-1.000000, 0.166667)(0.000000, 0.166667)(0.035714, 0.166667)(0.053571, 0.333333)(0.089286, 0.333333)(0.107143, 0.500000)(0.142857, 0.500000)(0.160714, 0.333333)(0.196429, 0.333333)(0.214286, 0.166667)(0.250000, 0.166667)(0.375000, 0.333333)(0.410714, 0.333333)(0.428571, 0.666667)(0.464286, 0.666667)(0.482143, 1.000000)(0.517857, 1.000000)(0.535714, 0.666667)(0.571429, 0.666667)(0.589286, 0.333333)(0.625000, 0.333333)(0.750000, 0.166667)(0.785714, 0.166667)(0.803571, 0.333333)(0.839286, 0.333333)(0.857143, 0.500000)(0.892857, 0.500000)(0.910714, 0.333333)(0.946429, 0.333333)(0.964286, 0.166667)(1.000000, 0.166667)(2.000000, 0.166667)
\psline[linecolor=green](-1.000000, 0.041667)(0.000000, 0.041667)(0.003571, 0.041667)(0.005357, 0.083333)(0.008929, 0.083333)(0.010714, 0.125000)(0.014286, 0.125000)(0.016071, 0.166667)(0.019643, 0.166667)(0.021429, 0.125000)(0.025000, 0.125000)(0.026786, 0.083333)(0.030357, 0.083333)(0.032143, 0.041667)(0.035714, 0.041667)(0.053571, 0.083333)(0.057143, 0.083333)(0.058929, 0.166667)(0.062500, 0.166667)(0.064286, 0.250000)(0.067857, 0.250000)(0.069643, 0.333333)(0.073214, 0.333333)(0.075000, 0.250000)(0.078571, 0.250000)(0.080357, 0.166667)(0.083929, 0.166667)(0.085714, 0.083333)(0.089286, 0.083333)(0.107143, 0.125000)(0.110714, 0.125000)(0.112500, 0.250000)(0.116071, 0.250000)(0.117857, 0.375000)(0.121429, 0.375000)(0.123214, 0.500000)(0.126786, 0.500000)(0.128571, 0.375000)(0.132143, 0.375000)(0.133929, 0.250000)(0.137500, 0.250000)(0.139286, 0.125000)(0.142857, 0.125000)(0.160714, 0.083333)(0.164286, 0.083333)(0.166071, 0.166667)(0.169643, 0.166667)(0.171429, 0.250000)(0.175000, 0.250000)(0.176786, 0.333333)(0.180357, 0.333333)(0.182143, 0.250000)(0.185714, 0.250000)(0.187500, 0.166667)(0.191071, 0.166667)(0.192857, 0.083333)(0.196429, 0.083333)(0.214286, 0.041667)(0.217857, 0.041667)(0.219643, 0.083333)(0.223214, 0.083333)(0.225000, 0.125000)(0.228571, 0.125000)(0.230357, 0.166667)(0.233929, 0.166667)(0.235714, 0.125000)(0.239286, 0.125000)(0.241071, 0.083333)(0.244643, 0.083333)(0.246429, 0.041667)(0.250000, 0.041667)(0.375000, 0.083333)(0.378571, 0.083333)(0.380357, 0.166667)(0.383929, 0.166667)(0.385714, 0.250000)(0.389286, 0.250000)(0.391071, 0.333333)(0.394643, 0.333333)(0.396429, 0.250000)(0.400000, 0.250000)(0.401786, 0.166667)(0.405357, 0.166667)(0.407143, 0.083333)(0.410714, 0.083333)(0.428571, 0.166667)(0.432143, 0.166667)(0.433929, 0.333333)(0.437500, 0.333333)(0.439286, 0.500000)(0.442857, 0.500000)(0.444643, 0.666667)(0.448214, 0.666667)(0.450000, 0.500000)(0.453571, 0.500000)(0.455357, 0.333333)(0.458929, 0.333333)(0.460714, 0.166667)(0.464286, 0.166667)(0.482143, 0.250000)(0.485714, 0.250000)(0.487500, 0.500000)(0.491071, 0.500000)(0.492857, 0.750000)(0.496429, 0.750000)(0.498214, 1.000000)(0.501786, 1.000000)(0.503571, 0.750000)(0.507143, 0.750000)(0.508929, 0.500000)(0.512500, 0.500000)(0.514286, 0.250000)(0.517857, 0.250000)(0.535714, 0.166667)(0.539286, 0.166667)(0.541071, 0.333333)(0.544643, 0.333333)(0.546429, 0.500000)(0.550000, 0.500000)(0.551786, 0.666667)(0.555357, 0.666667)(0.557143, 0.500000)(0.560714, 0.500000)(0.562500, 0.333333)(0.566071, 0.333333)(0.567857, 0.166667)(0.571429, 0.166667)(0.589286, 0.083333)(0.592857, 0.083333)(0.594643, 0.166667)(0.598214, 0.166667)(0.600000, 0.250000)(0.603571, 0.250000)(0.605357, 0.333333)(0.608929, 0.333333)(0.610714, 0.250000)(0.614286, 0.250000)(0.616071, 0.166667)(0.619643, 0.166667)(0.621429, 0.083333)(0.625000, 0.083333)(0.750000, 0.041667)(0.753571, 0.041667)(0.755357, 0.083333)(0.758929, 0.083333)(0.760714, 0.125000)(0.764286, 0.125000)(0.766071, 0.166667)(0.769643, 0.166667)(0.771429, 0.125000)(0.775000, 0.125000)(0.776786, 0.083333)(0.780357, 0.083333)(0.782143, 0.041667)(0.785714, 0.041667)(0.803571, 0.083333)(0.807143, 0.083333)(0.808929, 0.166667)(0.812500, 0.166667)(0.814286, 0.250000)(0.817857, 0.250000)(0.819643, 0.333333)(0.823214, 0.333333)(0.825000, 0.250000)(0.828571, 0.250000)(0.830357, 0.166667)(0.833929, 0.166667)(0.835714, 0.083333)(0.839286, 0.083333)(0.857143, 0.125000)(0.860714, 0.125000)(0.862500, 0.250000)(0.866071, 0.250000)(0.867857, 0.375000)(0.871429, 0.375000)(0.873214, 0.500000)(0.876786, 0.500000)(0.878571, 0.375000)(0.882143, 0.375000)(0.883929, 0.250000)(0.887500, 0.250000)(0.889286, 0.125000)(0.892857, 0.125000)(0.910714, 0.083333)(0.914286, 0.083333)(0.916071, 0.166667)(0.919643, 0.166667)(0.921429, 0.250000)(0.925000, 0.250000)(0.926786, 0.333333)(0.930357, 0.333333)(0.932143, 0.250000)(0.935714, 0.250000)(0.937500, 0.166667)(0.941071, 0.166667)(0.942857, 0.083333)(0.946429, 0.083333)(0.964286, 0.041667)(0.967857, 0.041667)(0.969643, 0.083333)(0.973214, 0.083333)(0.975000, 0.125000)(0.978571, 0.125000)(0.980357, 0.166667)(0.983929, 0.166667)(0.985714, 0.125000)(0.989286, 0.125000)(0.991071, 0.083333)(0.994643, 0.083333)(0.996429, 0.041667)(1.000000, 0.041667)(2.000000, 0.041667)

\end{pspicture}
\caption{The graphs of the functions $l_1$, $l_2$, $l_3$, and $l_4$ are drown in colors blue, brown, red, and green, respectively. 
The vertical lines in black form the straight hairy Cantor set determined by the functions $(l_n)_{n\geq 1}$. It is possible to zoom 
in on the image for further details.}
\label{F:SHCS}
\end{figure}

Note that for all $n \geq 1$, $1 \leq t_n(x) \leq 2n - 1$, which implies $|t_n(x) - n| \leq n-1$. 
Therefore, $l_{n-1}(x)/n \leq l_n(x) \leq l_{n-1}(x)$, for all $n\geq 1$ and $x\in \D{R}$.
In particular, we may define 
\[l(x)=\lim_{n \to \infty} l_n(x).\]
For $x \in C$, we have
\begin{equation}\label{E:L:example-2}
l(x)=\prod_{n \geq 1} (1 - |t_n(x)/n - 1|).
\end{equation}

Since each $l_n$ is continuous, the set 
\[X_n = \{ (x, y) \in \D{R}^2 \mid x \in \D{R}, 0 \leq y \leq l_n(x) \},\] 
is closed in $\D{R}^2$.
Therefore, 
\[\cap_{n \geq 0} X_n =\{ (x,y) \in \D{R}^2 \mid x\in \D{R}, 0 \leq y \leq l(x)\}\] 
is also closed in $\D{R}^2$.  
This implies that $l: \D{R} \to [0, +\infty)$ is upper semi-continuous, that is, for all $x\in \D{R}$, 
\begin{equation}\label{E:L:example-1}
\limsup_{t \to x} l(t) \leq l(x).
\end{equation}

We claim that the set 
\[X = \{ (x, y) \in \D{R}^2 \mid x \in C, 0 \leq y \leq l(x) \}\]
is a straight hairy Cantor set. 

It follows from the product formula in \refE{E:L:example-2} that for any $x \in C$ with 
$t_n(x) = n$, for large enough $n$, we have $l(x) > 0$.
Evidently, the set of such points is dense in $C$. 
This gives us property (i) in \refD{D:strainght-hairy-Cantor}. 

If $x$ is an end point of $C$, we must have either $t_n(x)=1$ or $t_n(x)=2n-1$, for large enough $n$. 
Using the product formula for $l$, those imply that $l(x)=0$. 
By \refE{E:L:example-1}, $0 \leq \limsup_{t \to x, t\in C} l(t)\leq l(x)=0$. 
Thus, property (ii) in \refD{D:strainght-hairy-Cantor} holds. 

Now assume that $x \in C$ is not an end point. If $l(x)=0$, by \refE{E:L:example-1}, we must have 
$0 \leq \limsup_{t \to x^+, t\in C} l(t) \leq l(x)=0$ and $0 \leq \limsup_{t \to x^-, t\in C} l(t)\leq l(x)=0$.  
If $l(x) > 0$, by \refE{E:L:example-2}, there must be $n_0$ such that for all $n\geq n_0$ we have $n/2 < t_n(x) < 3n/2$. 
For $n \geq \max\{n_0, 3\}$ we may consider the unique point $p_n \in C$ whose address is 
\[(t_1(x), \cdots, t_{n-1}(x), t_n(x) - 1, t_{n+1}(x), t_{n+2}(x), \cdots).\]
Evidently, $\{p_n\}$ is a strictly increasing sequence that tends to $x$. 
Moreover, 
\begin{align*}
\lim_{n \to \infty} \left| \frac{l(p_n)}{l(x)} - 1 \right| 
&= \lim_{n \to \infty} \left| \frac{1 - |(t_n(x) - 1)/n - 1|}{1 - |t_n(x)/n - 1|} - 1\right| \\
&= \lim_{n \to \infty} \left|\frac{|t_n(x)/n - 1| - |t_n(x)/n - 1/n - 1|}{1 - |t_n(x)/n - 1|}\right| \\
&\leq \lim_{n \to \infty} \frac{1/n}{1 - |t_n(x)/n - 1|} = 0.
\end{align*}
Therefore, $l(p_n) \to l(x)$. 
Similarly, we may define a strictly decreasing sequence $\{q_n\}$ in $C$ converging to $x$ such that 
$l(q_n) \to l(x)$. 
This implies part (iii) of \refD{D:strainght-hairy-Cantor}. 
\end{proof}

Below we introduce some basic definitions and notations which will be used for the proof 
of \refT{T:straight-hairy-homeomorphic}. 

Given a straight hairy Cantor set $X$, the corresponding function $l: C \to [0,+\infty)$ determining $X$ in 
\refD{D:strainght-hairy-Cantor} is unique. 
We refer to $l$ as the \textbf{length function} of $X$, and often denote it by $l^X$. 
Also, we refer to the Cantor set $C \subset X$ as the \textbf{base} Cantor set of $X$, or equivalently, say that $X$ 
is \textbf{based} on the Cantor set $C$. 
Note that $C$ is the unique Cantor set of points in $X$ which contains all point components of $X$.


Let $C \subset \D{R}$ be a Cantor set of points. A set $P \subset \D{R}$ is called a \textbf{Cantor partition} 
for $C$, if $P$ is the union of a finite number of closed intervals in $\D{R}$ with $C \subset P$ and 
$\partial P \subset C$. 
A \textbf{nest of Cantor partitions shrinking} to $C$, by definition, is a nest of Cantor partitions 
$P_0 \supset P_1 \supset P_2 \supset \dots$ for $C$ such that $C = \cap_{n \geq 0} P_n$. 
When we write a Cantor partition $P$ for $C$ as $P = \cup_{i = 1}^k J_i$, we mean that each $J_i$ is a non-empty 
closed interval in $\D{R}$ and $J_i\cap J_j=\emptyset$, for $1\leq i < j \leq k$.  
Also, we assume that the intervals $J_i$ are labelled in a way that $\inf J_i < \inf J_j$ if and only if $i< j$. 

Let $X$ be a straight hairy Cantor set based on the Cantor set of points $C$.  
For any Cantor partition $P$ for $C$, by property (ii) in \refD{D:strainght-hairy-Cantor}, 
$l^X(t)=0$, for all $t \in \partial P$. 

Let $C$ be a Cantor set, and let $l: C \to [0, \infty)$ be an upper semi-continuous function. 
For any closed interval $[u,v] \subset \D{R}$ with $[u,v] \cap C \neq \emptyset$, we use the notation 
\[\op{Max}(l, [u,v]) = \max \{ l(x) \mid x \in [u,v] \cap C\}.\]
By the upper semi-continuity of $l$, there is $m_{[u,v]} \in [u,v] \cap C$ with 
\[\op{Max}(l, [u,v])=l(m_{[u,v]}).\] 
Clearly, $m_{[u,v]}$ with this property might not be unique. 
For any such choice of $m_{[u,v]}$, we define
\[B^l_{[u,v]} : [u,v] \to [0, \op{Max}(l, [u,v])]\] 
as 
\[B^l_{[u,v]} (x) = \begin{cases}
\op{Max}(l, [u, x])  	&   x \leq m_{[u,v]},\\
\op{Max}(l, [x, v])     & x \geq m_{[u,v]}.
\end{cases}\]
We refer to $B^l_{[u,v]}$ as the \textbf{bump function} of $l$ on the interval $[u,v]$ associated to $m_{[u,v]}$. 

Assume that $P=\cup_{i=1}^k J_i$ is a Cantor partition for $C$. 
We may consider a bump function on each interval $J_i$, say $B^l_{J_i}$, and combine them to 
define a bump function for $l$ associated to the partition $P$,  
\[B_P^l:\D{R} \to [0, +\infty),\]
according to 
\[B_P^l(x)= 
\begin{cases}
B_{J_i}^l(x) & \tif x\in J_i, \\
0  &  \tif x\notin P.
\end{cases}\]

The following lemma is fairy easy, but will be used several times in the upcoming arguments.   

\begin{lem}\label{L:upper_sc_limit}
Let $C$ be a Cantor set, and $l : C \to [0, \infty)$ be an upper semi-continuous function. 
Also assume that $(I_n)_{n \geq 0}$ be a sequence of closed intervals in $\D{R}$ shrinking to a point $z \in C$.
Then
\[ l(z) = \lim_{n \to \infty} \op{Max}(l, I_n). \]
\end{lem}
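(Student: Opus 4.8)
The plan is to prove the two inequalities $\limsup_{n\to\infty}\op{Max}(l,I_n)\le l(z)$ and $\liminf_{n\to\infty}\op{Max}(l,I_n)\ge l(z)$ separately; together with the fact that each $\op{Max}(l,I_n)$ is a well-defined real number (guaranteed by the upper semi-continuity of $l$, as noted just before the statement), this gives the claimed limit. The lower bound is immediate: since $I_n\supseteq\{z\}$ and $z\in C$, we have $z\in I_n\cap C$, so $l(z)\le\op{Max}(l,I_n)$ for every $n$; taking $\liminf$ yields $l(z)\le\liminf_{n\to\infty}\op{Max}(l,I_n)$.

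For the upper bound, suppose toward a contradiction that $\limsup_{n\to\infty}\op{Max}(l,I_n)>l(z)$; pick $\eps>0$ and a subsequence $(I_{n_k})$ with $\op{Max}(l,I_{n_k})\ge l(z)+\eps$ for all $k$. For each $k$ choose $x_{n_k}\in I_{n_k}\cap C$ realising the maximum, so $l(x_{n_k})\ge l(z)+\eps$. Since the intervals $I_n$ shrink to the point $z$ (so $\diam I_n\to 0$ and $z\in I_n$), we have $x_{n_k}\to z$. But then $\limsup_{t\to z}l(t)\ge\limsup_{k\to\infty}l(x_{n_k})\ge l(z)+\eps>l(z)$, contradicting the upper semi-continuity of $l$ at $z$ (inequality of the type \eqref{E:L:example-1}). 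Hence $\limsup_{n\to\infty}\op{Max}(l,I_n)\le l(z)$, completing the proof.

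There is no real obstacle here; the only point requiring a little care is the meaning of ``shrinking to a point $z$'', which I read as: $z\in I_n$ for all $n$ and $\diam I_n\to 0$ (equivalently $I_0\supseteq I_1\supseteq\cdots$ with $\bigcap_n I_n=\{z\}$). Either reading makes the argument go through verbatim, since all that is used is $z\in I_n\cap C$ and that any selection $x_n\in I_n$ converges to $z$.
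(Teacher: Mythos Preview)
Your proof is correct and follows essentially the same approach as the paper: both establish the lower bound from $z\in I_n\cap C$ and the upper bound from upper semi-continuity of $l$ at $z$. Your version spells out the contradiction argument for the upper bound in more detail, whereas the paper simply asserts $l(z)\ge\limsup_{n\to\infty}\op{Max}(l,I_n)$ as a direct consequence of upper semi-continuity, but the underlying reasoning is identical.
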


\begin{proof}
For all $n \geq 0$, we have $l(z) \leq \op{Max}(l, I_n)$, which gives 
\[ l(z) \leq \liminf_{n \to \infty} \op{Max}(l, I_n). \]
On the other hand, since $l$ is upper semi-continuous, we have
\[ l(z) \geq \limsup_{n \to \infty} \op{Max}(l, I_n). \qedhere \]
\end{proof}

The key property of bump functions is stated in the following lemma. 

\begin{lem}\label{L:bump-functions}
Let $X$ be a straight hairy Cantor set, based on a Cantor set $C$, and with length function $l^X: C \to [0, \infty)$. 
Assume that $P=\cup_{i=1}^k J_i$ is a Cantor partition for $C$. 
Any bump function $B_P^{l^X}:\D{R} \to [0, +\infty)$ is piecewise monotone and continuous.
\end{lem}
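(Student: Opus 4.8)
The plan is to reduce everything to the continuity of the two one-sided running maxima out of which $B_P^{l^X}$ is built. Write $P = \cup_{i=1}^{k} J_i$ with $J_i = [u_i, v_i]$, and for each $i$ set $g_i(x) = \op{Max}(l^X, [u_i, x])$ on $[u_i, m_{J_i}]$ and $h_i(x) = \op{Max}(l^X, [x, v_i])$ on $[m_{J_i}, v_i]$, so that $B^{l^X}_{J_i}$ equals $g_i$ on the left half of $J_i$ and $h_i$ on the right half. Monotonicity is then built in: $g_i$ is nondecreasing, $h_i$ is nonincreasing, and $B_P^{l^X} \equiv 0$ on the open set $\D{R} \setminus P$; since there are only finitely many $J_i$, this exhibits $\D{R}$ as a finite union of closed intervals — for instance $(-\infty, u_1]$, then $[u_i, m_{J_i}]$ and $[m_{J_i}, v_i]$ and (for $i < k$) $[v_i, u_{i+1}]$, and finally $[v_k, +\infty)$ — on each of which $B_P^{l^X}$ is monotone. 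So the real issue is continuity.

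The core step is to prove that each $g_i$ is continuous on $[u_i, m_{J_i}]$; the statement for $h_i$ follows by the reflection $x \mapsto -x$. Since $g_i$ is monotone, it can only fail continuity through a one-sided jump, and I would exclude these. For right-continuity at a point $x_0$: if $(x_0, x_0 + \delta) \cap C = \emptyset$ for some $\delta > 0$ then $g_i$ is constant on $[x_0, x_0+\delta)$ and there is nothing to prove; otherwise $(x_0, x] \cap C \neq \emptyset$ for every $x > x_0$, which forces $x_0 \in C$, and then for any $x_n \downarrow x_0$, \refL{L:upper_sc_limit} applied to $[x_0, x_n]$ gives $\op{Max}(l^X, [x_0, x_n]) \to l^X(x_0) \le g_i(x_0)$, so $g_i(x_0) \le g_i(x_n) = \max\{g_i(x_0), \op{Max}(l^X, [x_0, x_n])\} \to g_i(x_0)$. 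For left-continuity at $x_0$: let $p \in [u_i, x_0] \cap C$ realise $g_i(x_0) = l^X(p)$. If $p < x_0$ then already $g_i(x) \ge l^X(p) = g_i(x_0)$ for $x \in (p, x_0)$, so there is no jump; hence we may assume $p = x_0$, i.e.\ $x_0 \in C$ and $l^X(x_0) = g_i(x_0)$. If $g_i(x_0) = 0$ there is trivially no jump from the left; if $g_i(x_0) > 0$ then $x_0$ is not an end point of $C$ by property (ii) of \refD{D:strainght-hairy-Cantor}, so property (iii) yields a sequence $t_j \uparrow x_0$ in $C$ with $l^X(t_j) \to l^X(x_0)$, and then $g_i(t_j) \ge l^X(t_j) \to g_i(x_0)$, contradicting the existence of a jump. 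I expect this step — specifically the bookkeeping around where the maximiser $p$ sits and whether $x_0$ is an end point of $C$ — to be the only genuinely delicate part, and it is precisely where axioms (ii) and (iii) enter.

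Next I would glue $g_i$ and $h_i$ at $m_{J_i}$. Since $m_{J_i}$ lies in both $[u_i, m_{J_i}]$ and $[m_{J_i}, v_i]$, and both of these are contained in $J_i$, each of $g_i(m_{J_i})$ and $h_i(m_{J_i})$ is squeezed between $l^X(m_{J_i})$ and $\op{Max}(l^X, J_i) = l^X(m_{J_i})$, so both equal $l^X(m_{J_i})$; hence $B^{l^X}_{J_i}$ is continuous on $J_i$ and is nondecreasing and then nonincreasing. Also $\partial J_i \subset \partial P \subset C$ and $u_i, v_i$ are end points of $C$, so $l^X$ vanishes there (as noted right after \refD{D:strainght-hairy-Cantor}), giving $B^{l^X}_{J_i}(u_i) = g_i(u_i) = l^X(u_i) = 0$ and likewise $B^{l^X}_{J_i}(v_i) = 0$.

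Finally I would assemble $B_P^{l^X}$ on all of $\D{R}$: it is locally constant equal to $0$ on the open set $\D{R} \setminus P$; on each $J_i$ it coincides with the continuous function $B^{l^X}_{J_i}$; and at each endpoint $u_i$ (respectively $v_i$) the one-sided limit taken from $\D{R} \setminus P$ equals $0 = B^{l^X}_{J_i}(u_i)$ (respectively $0 = B^{l^X}_{J_i}(v_i)$). Hence $B_P^{l^X}$ is continuous on $\D{R}$, and combined with the finite decomposition into intervals of monotonicity recorded in the first paragraph this would complete the proof.
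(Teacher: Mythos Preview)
Your proof is correct and follows essentially the same route as the paper's. Both reduce to showing each $B^{l^X}_{J_i}$ is continuous on $J_i=[u_i,v_i]$, use that $l^X$ vanishes at the end points $u_i,v_i\in\partial P$, and invoke axioms (ii) and (iii) of \refD{D:strainght-hairy-Cantor} precisely at points where the running maximum is attained at the point itself; the only difference is organisational --- you treat right- and left-continuity of the running maximum $g_i$ separately and locate the maximiser, whereas the paper checks the specific points $u,v,m_J$ and then interior points by the dichotomy $l^X(x)=B^X_J(x)$ versus $l^X(x)\neq B^X_J(x)$.
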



\begin{proof}
Recall that for any $u\in \partial P$, $l^X(u)=0$. 
Therefore, it is enough to show that each $B^{l^X}_{J_i}$ is piecewise monotone and continuous. 
Fix an arbitrary $i$, and let $J_i=J=[u,v]$ and $B_{J}^X=B^{l^X}_{J}$.
Assume that $B^X_J$ is associated to some $m_J \in J$ satisfying $\op{Max}(l^X, J)=l^X(m_J)$. 

By definition, $B^X_J$ is increasing on the interval $[u, m_J]$ and is decreasing on $[m_J, v]$. 
 
If $u=v$, then $B^X_J(u)=0$ and there is nothing to prove. 
When $u \neq v$, by property (i) in \refD{D:strainght-hairy-Cantor}, $l^X$ is positive at some point in $J$, and 
therefore, $m_J \in (u,v)$. 

By the increasing property of $B^X_J$ on $[u, m_J]$, and properties (ii) and (iii) in \refD{D:strainght-hairy-Cantor}, 
we note that 
\[l^X(u)= B^X_J(u) \leq \lim_{t \to u^+} B^X_J(t) \leq \limsup_{t\to u^+} l^X(t)=l^X(u).\]
This implies the continuity of $B^X_J$ at $u$. 
Similarly, one obtains the continuity of $B^X_J$ at $v$. 

Since $l(m_J) \neq 0$, by property (ii), $m_J$ is not an end point of $C$, and by property (iii), we have 
\[\limsup_{t \to m_J^-,\, t \in C} l^X(t) = \limsup_{t \to m_J^+,\, t\in C} l^X(t) = l^X(m_J).\]
Hence, 
\begin{align*}
l^X(m_J)= \limsup_{t \to m_J^-,\, t \in C} l^X(t)  & \leq \lim_{t\to m_J^-} B^X_J(t) \\ 
&\leq B^X_J(m_J) \leq \lim_{t\to m_J^+} B^X_J(t)  
\leq \limsup_{t \to m_J^+,\, t \in C} l^X(t)= l^X(m_J).
\end{align*}
This implies the continuity of $B^X_J$ at $m_J$. 

For an arbitrary $x \in (u, m_J)$, if $l(x) \neq B^X_J(x)$, then $B^X_J$ is constant near $x$, and in particular, it is 
continuous at $x$. If $l^X(x)=B^X_J(x)$, then  
\begin{align*}
l^X(x)=\limsup_{t\to x^-, \, t\in C} l^X(t) & \leq \lim_{t \to x^-} B^X_J(t) \\
& \leq B^X_J(x) \leq \lim_{t \to x^+} B^X_J(t)   
\leq \limsup_{t\to x^+, \, t\in C} l^X(t)=l^X(x). 
\end{align*}
This implies the continuity of $B^X_J$ at $x$. 
Similarly, one proves the continuity of $B^X_J$ on $(m_J, v)$. 
\end{proof}

The proof of \refT{T:straight-hairy-homeomorphic} will be based on the following two propositions. 

\begin{propo}\label{P:straight-hairy-partitions-exist}
Let $X$ and $Y$ be straight hairy Cantor sets based on the same Cantor set $C$ and with length functions 
$l^X$ and $l^Y$.
There are nests of Cantor partitions $P^X_n= \cup_{i=1}^{k_n} J^X_{n,i}$ and $P^Y_n= \cup_{i=1}^{k_n} J^Y_{n,i}$, 
for $n\geq 0$, shrinking to $C$, such that for all $n\geq 0$ we have 
\begin{itemize}
\item[(i)] for all $1\leq i \leq k_n$ we have $\big | J^X_{n,i} \big| < 2^{-n} |C|$ and $\big| J^Y_{n,i} \big| < 2^{-n}|C|$; 
\footnote{We use the notation $|J|$ to denote the Euclidean diameter of a given $J \subset \D{R}$.}
\item[(ii)] for all $1\leq i \leq k_{n+1}$ and $1 \leq j \leq k_n$, $J^X_{n+1, i} \subset J^X_{n, j}$ 
if and only if $J^Y_{n+1, i} \subset J^Y_{n, j}$;
\item[(iii)] whenever $J^X_{n+1, i} \subset J^X_{n, j}$ for some $i$ and $j$, then 
\[1-2^{-(n+2)} < 
\frac{\op{Max}(l^Y,J^Y_{n+1,i}) /\op{Max}(l^Y, J^Y_{n, j})}
{\op{Max}(l^X, J^X_{n+1, i})/\op{Max} (l^X, J^X_{n, j})} 
< 1 + 2^{-(n+2)}.\]
\end{itemize}
\end{propo}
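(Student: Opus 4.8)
The plan is to construct the two nests simultaneously by induction on $n$, where at each stage we refine the currently fixed intervals so that the three numerical requirements are met. The starting point $P^X_0 = P^Y_0$ is just a single closed interval (or a small finite collection of intervals) containing $C$ with endpoints in $C$; condition (iii) is vacuous at the bottom. Assume $P^X_n$ and $P^Y_n$ are built. Fix one interval $J = J^X_{n,j}$ on the $X$-side and the corresponding $J' = J^Y_{n,j}$ on the $Y$-side. I want to chop $J \cap C$ (and simultaneously $J' \cap C$, using the common base Cantor set $C$) into finitely many pieces, each of diameter $< 2^{-(n+1)}|C|$, whose endpoints lie in $C$ (possible since $C$ has no interior, so between any two points of $C$ we can slip an endpoint into a complementary gap, then push it to a nearby point of $C$). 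That handles (i) and the nesting-matching part of (ii) automatically, since the same subdivision indices are used on both sides.

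The real content is (iii): I need the subdivision fine enough that for each sub-interval $K$ (with $X$-copy $K^X \subset J$ and $Y$-copy $K^Y \subset J'$) the ratio
\[
\frac{\op{Max}(l^Y, K^Y)/\op{Max}(l^Y, J')}{\op{Max}(l^X, K^X)/\op{Max}(l^X, J^X)}
\]
lies in $(1 - 2^{-(n+2)}, 1 + 2^{-(n+2)})$. Here the key observation is a continuity/limit statement: if instead of a fixed subdivision we let the sub-interval containing a given point $z \in C$ shrink to $z$, then by \refL{L:upper_sc_limit} its $\op{Max}$ tends to $l^X(z)$ on the $X$-side and to $l^Y(z)$ on the $Y$-side. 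Since $\op{Max}(l^X,J^X)$ and $\op{Max}(l^Y,J')$ are fixed positive numbers (positive by property (i) of \refD{D:strainght-hairy-Cantor}, assuming we never took an interval disjoint from the support — which we can arrange, or treat trivially), the problematic ratio for a tiny interval around $z$ is close to $\dfrac{l^Y(z)/\op{Max}(l^Y,J')}{l^X(z)/\op{Max}(l^X,J^X)}$ — but that is not $1$. So the naive approach fails: we cannot make this ratio close to $1$ pointwise.

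The fix — and this is the step I expect to be the main obstacle — is that the maxima must be \emph{anchored}: when subdividing, I should always include among the sub-intervals one small interval around a point $m$ where $l^X$ attains $\op{Max}(l^X, J^X)$, and likewise control the $Y$-side there. More carefully, I would subdivide in two rounds. First cover $J \cap C$ by finitely many closed intervals with endpoints in $C$ so small that, by \refL{L:upper_sc_limit} applied finitely often and by upper semicontinuity, on each one $\op{Max}(l^X, \cdot)$ and $\op{Max}(l^Y, \cdot)$ are within a factor $1 + \varepsilon$ of $l^X$ (resp. $l^Y$) at some chosen point, and also so that the sub-interval meeting a maximising point $m_{J^X}$ of $l^X$ in $J^X$ satisfies $\op{Max}(l^X,\cdot)$ there is within $1+\varepsilon$ of $\op{Max}(l^X,J^X)$, and symmetrically for $Y$. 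Then the one sub-interval $K_0$ carrying the $X$-maximum has $\op{Max}(l^X,K_0^X)/\op{Max}(l^X,J^X)$ within $(1\pm\varepsilon)$ of $1$, and I must argue its $Y$-counterpart ratio is correspondingly controlled; iterating, and using that property (iii) composes multiplicatively down the nest, gives the telescoping bound. The delicate point is coordinating the two sides so that the \emph{same} index set of sub-intervals works and the ratio of normalisations stays near $1$; I would push this through by choosing $\varepsilon = \varepsilon(n)$ small enough (e.g. $\varepsilon < 2^{-(n+5)}$) and invoking \refL{L:upper_sc_limit} and the fact that $C = \bigcap P_n$ to guarantee the required fineness exists, handling the finitely many intervals $J^X_{n,j}$ one at a time and taking the common refinement.
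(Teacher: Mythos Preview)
Your proposal has a genuine gap at exactly the point you flag as ``the delicate point''. You try to use (essentially) the \emph{same} subdivision of $C$ on both sides, and then hope that by refining enough the ratio in (iii) becomes close to $1$. But $l^X$ and $l^Y$ are completely unrelated upper semi-continuous functions on $C$; there is no reason that on a small interval $K$ the quantity $\op{Max}(l^X,K)/\op{Max}(l^X,J)$ should be anywhere near $\op{Max}(l^Y,K)/\op{Max}(l^Y,J)$. Your ``anchoring'' fix only controls the single sub-interval containing a maximiser of $l^X$; for every other sub-interval the ratio can be as far from $1$ as you like, no matter how fine the partition. Invoking \refL{L:upper_sc_limit} does not help here, since as you yourself observe it only shows the ratio tends to $l^Y(z)\op{Max}(l^X,J)\big/\big(l^X(z)\op{Max}(l^Y,J')\big)$, not to $1$.

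The paper's proof uses a genuinely different mechanism. The two partitions $P^X_n$ and $P^Y_n$ are \emph{not} the same; only their combinatorial nesting pattern agrees. At each stage one first takes an arbitrary fine refinement on one side (say the $Y$-side when $n$ is even), recording the target ratios $\op{Max}(l^Y,J^Y_{n+1,i})/\op{Max}(l^Y,J^Y_{n,j})$. Then one \emph{constructs} the $X$-side partition interval by interval, using the continuity of the bump function $B^{l^X}_{J^X_{n,j}}$ (\refL{L:bump-functions}) and the intermediate value theorem: as a cut point $v$ moves from an endpoint towards the maximiser $m^X_{n,j}$, $\op{Max}(l^X,[u,v])$ sweeps continuously from $0$ to $\op{Max}(l^X,J^X_{n,j})$, so one can stop at a value making the $X$-ratio match the prescribed $Y$-ratio to within $2^{-(n+2)}$, then nudge to a nearby endpoint of $C$. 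Alternating which side is refined first (even/odd $n$) is what guarantees that \emph{both} nests shrink to $C$ and satisfy (i). This matching-via-bump-function step is the idea your proposal is missing.
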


\begin{proof}
We shall use the notations 
\[J^X_{n,i}=[u^X_{n,i}, v^X_{n,i}], J^Y_{n,i}= [u^Y_{n,i}, v^Y_{n,i}], \quad \tfor n\geq 0 \tand 1\leq i \leq k_n,\]
with 
\[u^X_{n,1} < v^X_{n, 1} < u^X_{n,2} < \cdots < u^X_{n, k_n} < v^X_{n, k_n},\]
and 
\[u^Y_{n, 1} < v^Y_{n, 1} < u^Y_{n,2} < \cdots < u^Y_{n, k_n} < v^Y_{n, k_n}.\]

By an inductive argument, we shall simultaneously define the partitions $P_n^X$ and $P_n^Y$. 
For $n=0$, we set $k_0=1$ and $J_{0,1}^X = J_{0,1}^Y= [\inf C, \sup C]$. That is, each of  $P_0^X$ and $P_0^Y$ 
consists of one closed interval. Property (i) in the proposition holds, and there is nothing to verify for items (ii) and (iii).  

Now assume that the partitions $(P_m^X)_{m=0}^n$ and $(P_m^Y)_{m=0}^n$ are defined for some $n\geq 0$ 
and satisfy the properties in items (i)-(iii) in the proposition. Below we defined $P_{n+1}^X$ and $P_{n+1}^Y$. 

Let us assume that $n$ is even (the odd case is mentioned at the end of the proof). 
Let $P_{n+1}^Y=\cup_{i=1}^{k_{n+1}}J^Y_{n+1, i}$ be an arbitrary Cantor partition of $C$ such that $P_{n+1}^Y$ 
is a subset of $P_{n}^Y$ and 
\begin{equation}\label{E:base-step}
\big| J^Y_{n+1,i}\big|  < 2^{-(n+2)} |C|, \quad \tfor 1\leq i \leq k_{n+1}.
\end{equation}

We shall use $P^Y_{n+1}$ to construct $P_{n+1}^X$.
To that end, it is enough to identify all the intervals $J^X_{n+1,i}$ which are contained in $J^X_{n,j}$, for each $j$ 
with $1\leq j \leq k_n$. 
Before identifying those intervals, we note that the number of such intervals for each $j$ must be equal to the 
number of the intervals $J^Y_{n+1,i}$ which are contained in $J^Y_{n,j}$. 
This will clearly guarantee property (ii). To ensure property (iii) we need more detailed analysis. 

Fix an arbitrary $j$ with $1\leq j \leq k_n$. 
Recall that 
\[l^X(u^X_{n,j})= l^X(v^X_{n,j})= l^Y(u^Y_{n,j})= l^Y(v^Y_{n,j})=0.\] 
There are $m_{n, j}^X \in (u_{n, j}^X, v_{n, j}^X)$ and $m_{n, j}^Y \in (u_{n, j}^Y, v_{n, j}^Y)$
such that 
\[l^X(m_{n, j}^X) = \op{Max}(l^X, J^X_{n,j}), \quad l^Y(m_{n, j}^Y) = \op{Max}(l^Y, J^Y_{n,j}).\] 

Since $P_{n+1}^Y \subseteq P_n^Y$ there are $0 \leq p \leq q \leq k_{n+1}$ such that 
\[u_{n+1, p}^Y = u_{n, j}^Y ,\quad v_{n+1, q}^Y = v_{n, j}^Y. \]
We set
\[u_{n+1, p}^X = u_{n, j}^X , \quad v_{n+1, q}^X = v_{n, j}^X.\]
If $p=q$, then we are done. Below we assume that $0 \leq p < q \leq k_{n+1}$. 
We need to identify $u_{n+1, i}^X$ and $v_{n+1,i-1}^X$, for $p < i \leq q$. 

There is a unique integer $l$ with $p \leq  l \leq q $ such that
\[u_{n+1,l}^Y<m_{n,j}^Y<v_{n+1,l}^Y.\]
We shall determine $v_{n+1,i}^X$ and $u_{n+1,i+1}^X$ recursively by going from $i=p$ to $l-1$, if there are any such 
$i$, and independently going from $i= q-1$ to $l$, if there are any such $i$, so that 
\[u_{n+1,p}^X<v_{n+1,p}^X<\cdots<u_{n+1,l}^X<m_{n,j}^X<v_{n+1,l}^X<\cdots<u_{n+1,q}^X<v_{n+1,q}^X.\]
We only explain the process for $p \leq i \leq l-1$, the other case being similar. 

Let us assume that for some $p \leq i < l - 1$, we have already defined
\[u_{n+1,p}^X < v_{n+1,p}^X < \cdots < u_{n+1,i}^X < m_{n,j}^X.\]
Since 
\[\op{Max}(l^X, [u^X_{n+1, p}, v^X_{n+1,q}])= \op{Max}(l^X, J^X_{n,j})= l^X(m^X_{n,j}),\] 
and $m^X_{n,j} > u^X_{n+1,i}$, we have 
\[\op{Max}\big (l^X, [u_{n+1, i}^X, v_{n+1, q}^X]\big)= l^X(m^X_{n,j}).\] 
Therefore, we may use $m_{n, j}^X$ to define a bump function $B^{l^X}_{[u_{n+1, i}^X, v_{n+1, q}^X]}$ on the set 
$[u_{n+1, i}^X, v_{n+1, q}^X]$.
By \refL{L:bump-functions}, as we move $v$ from $u_{n+1, i}^X$ to $m_{n, j}^X$, the value of 
$B_{[u_{n+1, i}^{X}, v_{n+1, q}^X]}^{l^X}(v)$ continuously increases from $0$ to $\op{Max}(l^X, J^X_{n, j})$. 
Therefore, since 
\[0 < \op{Max}(l^Y, J^Y_{n+1, i}) / \op{Max}(l^Y, J^Y_{n, j}) \leq 1,\] 
there must be $v' \in (u_{n+1, i}^X, m_{n,j}^X)$ such that 
\[1-2^{-(n+2)} < \frac{\op{Max}(l^Y, J^Y_{n+1, i}) / \op{Max}(l^Y, J^Y_{n, j})}
{l^X(v') / \op{Max}(l^X, J^X_{n,j})}  < 1 + 2^{-(n+2)}.\] 
Now, by slightly moving $v'$ to the right, so that it becomes an end point of $C$, we may find an interval 
$(v_{n+1, i}^X, u_{n+1,i+1}^X)$ such that
\begin{gather*} 
u_{n+1, i}^X < v' < v_{n+1, i}^X < u_{n+1, i+1}^X < m_{n, j}^X , \\
v_{n+1, i}^X, u_{n+1, i+1}^X \in C, \quad (v_{n+1, i}^X, u_{n+1,i+1}^X) \cap C= \emptyset,
\end{gather*}
and 
\begin{gather*} 
1-2^{-(n+2)} < 
\frac{\op{Max}(l^Y, J^Y_{n+1,i}) / \op{Max}(l^Y, J^Y_{n, j})}
{\op{Max}(l^X, [u^X_{n+1,i}, v^X_{n+1, i}]) / \op{Max}(l^X, J^X_{n,j})} < 1 + 2^{-(n+2)}. 
\end{gather*}

Here, $v_{n+1,i}^X=v'$ and $u_{n+1, i}^X= \inf \{t\in C \mid t> v'\}$. 
Also, we have used that the map $v' \mapsto \op{Max}(l^X, [u_{n+1,i}^X, v'])$ depends continuously on $v'$. 
This completes the induction step to identify $v_{n+1,i}^X$ and $u_{n+1,i+1}^X$. 
Note that at the end of this construction, $m^X_{n,j} \in [u_{n+1,l}^X, v_{n+1,l}^X]$, thus, 
\[\frac{\op{Max}(l^Y, J^Y_{n+1,l}) / \op{Max}(l^Y, J^Y_{n,j})}
{\op{Max}(l^X, J^X_{n+1,l}) / \op{Max}(l^X, J^X_{n,j})}=1.\]

We assumed at the induction step that $n$ is even. For odd $n$, we interchange the role of $X$ and $Y$ in the 
above process, that is, we start with an arbitrary partition $P^X_{n+1}$ for $C$ which is contained in $P^X_n$
and satisfies \refE{E:base-step}, and build $P^Y_{n+1}$ in the same fashion. 
This guarantees that for all $n\geq 0$ and $1\leq i \leq k_n$ we have $\big| J^X_{n,i}\big| \leq 2^{-n}|C|$
and $\big| J^Y_{n,i}\big| \leq 2^{-n}|C|$. 
\end{proof} 

\begin{propo}\label{P:straight-hairy-matching-partitions}
Let $X$ and $Y$ be straight hairy Cantor sets based on the same Cantor set of points $C$ and with length functions 
$l^X$ and $l^Y$. 
Let $P_n=\bigcup_{i = 1}^{k_n} J_{n,i}$, for $n\geq 0$, be a nest of Cantor partitions shrinking to $C$, with 
disjoint non-empty closed intervals $J_{n,i}$, for each $n\geq 0$. 
Assume that whenever $J_{n, i} \subseteq J_{n-1, j}$ for some integers $n$, $i$, and $j$, we have 
\[ 1 - 1/2^{n+1} < \frac{\op{Max}(l^Y,J_{n,i})/\op{Max}(l^Y,J_{n-1,j})}
{\op{Max}(l^X,J_{n,i})/\op{Max}(l^X,J_{n-1,j})} < 1 + 1/2^{n+1}.\]
Then, $X$ and $Y$ are ambiently homeomorphic.\footnote{Recall that two subsets of the plane are ambiently 
homeomorphic if there is a homeomorphism of the plain which maps one bijectively onto the other.}
\end{propo}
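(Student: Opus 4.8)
The plan is to build the ambient homeomorphism as an infinite composition (or direct limit) of ``model'' homeomorphisms of the plane, each adapted to one level $n$ of the given nest of Cantor partitions. The guiding picture is that a straight hairy Cantor set based on $C$ is determined, up to a plane homeomorphism fixing the real axis pointwise, by the data of the numbers $\op{Max}(l,J_{n,i})$ for all the partition intervals; the hypothesis says the multiplicative increments of these data for $X$ and $Y$ agree to within $1\pm 2^{-(n+1)}$, which is summable, so the ``rescaling'' needed to match $X$ to $Y$ converges.

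First I would set up, for each $n$, a vertical-strip decomposition of the plane: over each interval $J_{n,i}=[a,b]$ take the strip $\{(x,y): x\in J_{n,i}\}$, and over each gap between consecutive partition intervals take the complementary strip. Inside the strip over $J_{n,i}$, both $X$ and $Y$ have a bump function (in the sense of \refL{L:bump-functions}, applied to $l^X$ and $l^Y$ on that interval with a chosen maximiser $m$), which is piecewise monotone and continuous and vanishes at the endpoints of $J_{n,i}$. I would define a homeomorphism $h_n$ of the plane, supported in (a neighbourhood of) $X\cup Y$, that on the strip over $J_{n,i}$ is a vertical rescaling $y\mapsto \rho_{n,i}(x)\,y$ (interpolated to be the identity away from the hairs and near the strip boundary), where $\rho_{n,i}$ is chosen so that after applying $h_n$ the ratio $\op{Max}(l^{\bullet},J_{n,i})/\op{Max}(l^{\bullet},J_{n-1,j})$ of the rescaled $X$ agrees exactly, or up to a controlled error $1\pm 2^{-(n+2)}$, with that of $Y$. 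The key quantitative input is the hypothesis inequality, which forces $\log\rho_{n,i}$ at level $n$ to differ from $\log\rho$ at level $n-1$ by at most $O(2^{-n})$ on each interval; summing the geometric series shows the partial compositions $H_N=h_N\circ\cdots\circ h_1$ converge uniformly on compacta to a homeomorphism $H$ of the plane, and that $H(X)$ has the same length data as $Y$ on every partition interval, hence (since $C=\bigcap P_n$ and the intervals shrink) $H(X)=Y$ as sets. One must also verify that the length function of $H(X)$ again satisfies the axioms (i)--(iii), which follows because vertical rescaling by a function that is constant on each $J_{n,i}$ at the relevant scale preserves the $\limsup$ relations defining a straight hairy Cantor set.

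The main obstacle, and the place where care is genuinely needed, is the \emph{compatibility of the bump functions across consecutive levels inside a single parent interval}: when $J_{n,i}$ and $J_{n,i'}$ are two children of $J_{n-1,j}$ lying on opposite sides of the parent maximiser $m_{n-1,j}$, the bump function of the parent is increasing on one and decreasing on the other, and the vertical rescaling $h_n$ must be built so that it does not tear the graph at the endpoints shared between a child interval and an adjacent gap — i.e. $\rho_{n,i}$ must tend continuously to the correct boundary value ($1$, after the previous rescalings) at $\partial J_{n,i}$, using that $l^{\bullet}=0$ there. Controlling this requires choosing $h_n$ to be the identity outside a $2^{-n}$-neighbourhood of the hairs over $P_n$ and checking that these neighbourhoods, for the tails $n\ge N$, stay within a fixed neighbourhood of $X$, so the infinite composition is locally eventually constant off $X$ and globally uniformly Cauchy. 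I expect the estimates here to be routine once the vertical-rescaling factors are written down explicitly via \refL{L:upper_sc_limit} and \refL{L:bump-functions}, but the bookkeeping of which child sits on which side of which maximiser, and the induction that the accumulated error after level $n$ is below $2^{-n}$, is the real content.
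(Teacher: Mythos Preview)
Your overall strategy --- an infinite composition of level-$n$ vertical rescalings whose multiplicative increments are controlled by the hypothesis --- is exactly the approach the paper takes. But there is one genuine gap and one place where you are making life harder than necessary.

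The gap is injectivity of the limit. You assert that the uniform limit of the $H_N$ is a homeomorphism, but a uniform limit of plane homeomorphisms is only guaranteed to be continuous. In the paper's construction the rescaling factor on each $J_{n,i}$ is the \emph{constant} $\eta_n\in(1-2^{-(n+1)},1+2^{-(n+1)})$, and one checks explicitly that each $H_n$ contracts vertical distances by at most a factor $1-2^{-n}$ (both below the bump graph, where the map is $y\mapsto\eta_n y$, and above it, where it interpolates linearly to the identity on $y=1$). Since $\prod_{n\ge 1}(1-2^{-n})>0$, the limit is injective on every vertical line, hence injective. Your proposal never mentions such a lower bound, and the alternative mechanism you suggest --- that the composition is ``locally eventually constant off $X$'' --- is false for the natural construction: every $H_n$ moves points above the hairs as well, since it must interpolate between the rescaling and the identity at the top of the box.

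The overcomplication is your worry about making $\rho_{n,i}(x)$ tend continuously to $1$ at $\partial J_{n,i}$ and about tracking which child lies on which side of which parent maximiser. Neither is needed here. The rescaling factor can be taken piecewise constant (jumping at $\partial P_n$), and $H_n$ is still continuous because the bump function $B_n$ vanishes at every point of $\partial P_n$: at those points the formula ``$y\mapsto\eta_n(x)y$ below $B_n(x)$'' and the formula ``identity outside $P_n$'' agree. The children-versus-maximiser bookkeeping you anticipate is the content of \refP{P:straight-hairy-partitions-exist}, not of this proposition; here the partitions are \emph{given}, and all one needs is the telescoping identity $\prod_{m\le n}\eta_m(t)=\op{Max}(l^Y,J_{n,i})/\op{Max}(l^X,J_{n,i})$ together with \refL{L:upper_sc_limit} to pass to the pointwise limit $l^{\phi(X)}=l^Y$.
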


Let us denote the Euclidean ball of radius $r$ about $z\in \D{R}^2$ with $\D{D}(z,r)$. 
In the same fashion, given $K \subset \D{R}^2$, let 
\[\D{D}(K, r)= \cup_{z\in K} \D{D}(z,r).\] 

\begin{proof}
Without loss of generality we may assume that $C \subset [0,1]$, with $0$ and $1$ in $C$. 
This may be achieved by applying a translation and then a linear rescaling. 
Also, by applying two rescalings in the second coordinate in $\D{R}^2$, we may assume that 
\[\op{Max}(l^X,[0,1])= \op{Max}(l^Y,[0,1])= 1/2.\] 
These changes do not alter the ratios of the maximums in the hypotheses of the proposition.

For $n \geq 1$, we define the function $\gh_n: \D{R} \to \D{R}$ as follows. 
If $t\in P_n$, we identify the unique integers $i$ and $ j$ with $t \in J_{n,i} \subseteq J_{n-1, j}$, and set 
\[\gh_n(t) 
= \frac{\op{Max}(l^Y,J_{n,i})/\op{Max}(l^Y,J_{n-1,j})}{\op{Max}(l^X,J_{n,i})/\op{Max}(l^X,J_{n-1,j})}.\]
For $t \notin P_n$, we set $\gh_n(t)=1$. 
Clearly, $\gh_n$ is constant on each closed interval of the partition $P_n$. 

By the assumption in the proposition, for all $n\geq 1$ and all $t\in \D{R}$, we have 
\[|\gh_n(t) - 1| < 1/2^{n+1}.\]

By an inductive argument, we aim to build homeomorphisms $\gf_n : \D{R}^2 \to \D{R}^2$, for $n\geq 0$, 
such that each $\gf_{n}(X)$ is a straight hairy Cantor set based on $C$ and its length function $l^{\gf_{n}(X)}$ 
satisfies the relation 
\begin{equation}\label{E:incremental-fix}
l^{\gf_n(X)}(t) = \frac{\op{Max}(l^Y, J_{n,i})}{\op{Max}(l^X, J_{n, i})} \cdot l^X(t),
\end{equation}
whenever $t \in J_{n, i}$ for some $i$.

For $n = 0$, we let $\gf_0$ be the identity map on $\D{R}^2$. 
Assume that for some $n \geq 1$, $\gf_{n - 1}$ is defined and satisfies \refE{E:incremental-fix}.
Below we define $\gf_n$. 

Let $B_n$ be a bump function for $l^{\gf_{n-1}(X)}$ associated to the Cantor partition $P_n$. 
Since $\gh_n$ is constant on each interval of $P_n$, we have
\begin{align*}
\max\{\gh_n(t)& B_n(t)  \mid t \in [0, 1] \}  \\
& = \max \{ \gh_n(t) l^{\gf_{n-1}(X)}(t) \mid t \in C \} \\
& = \max \left\{\frac{\op{Max}(l^Y, J_{n,i})}{\op{Max}(l^X, J_{n, i})}\cdot  l^X(t) \; \Big \vert \; 
\begin{array}{ll}
1 \leq i \leq k_n \\ 
1\leq j \leq k_{n-1}
\end{array}, t \in J_{n, i} \cap C  \subset J_{n-1,j}\right\} \\
& = \max\{\op{Max}(l^Y, J_{n,i}) \mid 1 \leq i \leq k_n \} = 1/2. 
\end{align*}
In particular, for all $t\in [0,1]$, 
\[B_n(t) \leq 1/(2\gh_n(t)) \leq (1/2)(4/3)=2/3.\] 

Define $H_n: \D{R}^2 \to \D{R}^2$ as the identity map outside of $[0, 1]^2$, and for $(x, y) \in [0, 1]^2$, let 
\[H_n(x, y) =
\begin{cases}
\left( x, \gh_n(x) y \right) &\tif	y \leq B_n(x), \\
\left( x, 1 - \frac{1 - \gh_n(x)B_n(x)}{1 - B_n(x)}(1 - y) \right) & \tif	y > B_n(x).
\end{cases}\]
Evidently, $H_n$ is identity outside $P_n \times [0, 1]$, and also on the real slice $(\D{R}, 0)$.
For $x_0 \in P_n$, $H_n$ maps the line segment $\{x_0\} \times [0,1]$ onto itself in a piecewise linear fashion, with 
\[H_n \left( \{x_0\} \times [0, B_n(x_0)] \right) = \{x_0\} \times [0, \gh_n(x_0) B_n(x_0)], \] 
\[H_n \left( \{x_0\} \times [B_n(x_0), 1] \right) = \{x_0\} \times [\gh_n(x_0) B_n(x_0), 1]. \]
In particular, 
\[H_n \left (\{ (x, y) \mid x \in [0, 1], y = B_n(x) \}\right )
=\{ (x, y) \mid x \in [0, 1], y = \gh_n(x) B_n(x) \}.\]
It follows from the continuity of $B_n$ in \refL{L:bump-functions} that $H_n$ is continuous, and hence, a 
homeomorphism of the plane. We define $\gf_n= H_n \circ \gf_{n-1}$. 

The set $\gf_n(X)$ is a straight hairy Cantor set. That is because, the homeomorphism $H_n$ sends 
each vertical line $x=x_0$ into itself, and is the identity on the horizontal line $(\D{R}, 0)$. 

To prove that $\gf_n$ satisfies \refE{E:incremental-fix}, choose $t \in J_{n, i} \subseteq J_{n-1, j}$, and note that 
$\gf_n= H_n \circ \gf_{n-1}$ implies  
\begin{align*}
l^{\gf_n(X)}(t) &= \gh_n(t) l^{\gf_{n-1}(X)}(t) \\
& =\frac{\op{Max}(l^Y,J_{n,i})/\op{Max}(l^Y,J_{n-1,j})}{\op{Max}(l^X,J_{n,i})/\op{Max}(l^X,J_{n-1,j})}
\cdot \frac{\op{Max}(l^Y, J_{n-1,j})}{\op{Max}(l^X, J_{n-1, j})} \cdot l^X(t) \\
&= \frac{\op{Max}(l^Y,J_{n,i})}{\op{Max}(l^X,J_{n,i})} \cdot l^X(t).
\end{align*}
This completes the process of defining the maps $\gf_n$, for $n\geq 0$. 

Next we show that the maps $\gf_n$ converge uniformly to a homeomorphism $\gf: \D{R}^2 \to \D{R}^2$.  
To prove this, first note that 
when $0 \leq y \leq B_n(x)$, 
\begin{gather*}
|H_n(x, y) - (x, y)| = \left| (\gh_n(x) - 1) y \right| \leq 1/2^{n+1},
\end{gather*} 
and when $B_n(x) \leq y \leq 1$, 
\begin{gather*}
|H_n(x, y) - (x, y)| = \left| (1 - y)(\gh_n(x) - 1)\frac{B_n(x)}{1 - B_n(x)} \right| 
\leq \frac{1}{2^{n+1}} \frac{2/3}{1/3}= 1/2^n. 
\end{gather*} 
These imply that 
\begin{align*}
& \max \left \{ |\gf_n(x,y) - \gf_{n-1}(x,y)| \mid (x,y) \in \D{R}^2 \right\} \\
& \quad =\max \left \{ |H_n(x,y) - (x,y)|  \mid (x,y) \in \D{R}^2 \right\} \leq 1/2^n.
\end{align*}
Hence, the sequence $(\gf_n)_{n = 0}^{\infty}$ is uniformly Cauchy. In particular, the 
limiting map $\gf$ exists and is continuous. 

We claim that $\phi$ is injective. 
To see this, fix $x \in [0, 1]$. For any $n \geq 1$, and all $y$ and $y'$ in $[0, B_n(x)]$, we have 
\[|H_n(x, y) - H_n(x, y')| = \eta_n(x) |y - y'| \geq (1 - 2^{-(n+1)})|(x, y) - (x, y')|.\]
For $y$ and $y'$ in $[B_n(x), 1]$, we have 
\begin{align*}
|H_n(x, y) - H_n(x, y')| 
&= \frac{1 - \eta_n(x)B_n(x)}{1 - B_n(x)} |y - y'|  \\
&= \left( 1 - \frac{B_n(x)}{1 - B_n(x)}(1 - \eta_n(x)) \right) |y - y'| \\
&\geq \left( 1 - \frac{B_n(x)}{1 - B_n(x)} 2^{-(n+1)} \right)|y - y'| \\
&\geq ( 1 - 2^{-n} )|(x, y) - (x, y')|.
\end{align*}
Finally, since $H_n$ is a homeomorphism of ${\{x\} \times [0, 1]}$, for $0 \leq y' < B_n(x) < y \leq 1$, we have 
\begin{align*}
|H_n(x, y) - H_n(x, y')|
&= |H_n(x, y) - H_n(x, B_n(x))| + |H_n(x, B_n(x)) - H_n(x, y')| \\
&\geq (1 - 2^{-n})|(x, y) - (x, y')|.
\end{align*}
Combining the above inequalities, we conclude that for all $x$, $y$, and $y'$ in $[0, 1]$, as well as all $n \geq 0$, 
we have
\begin{align*}
|\phi_{n}(x, y) - \phi_n(x, y')|
&= | H_n \circ H_{n-1} \circ \cdots \circ H_1(x, y) - H_n \circ H_{n-1} \circ \cdots \circ H_1(x, y')| \\
&\geq (1 - 2^{-n})| H_{n-2} \circ \cdots \circ H_1(x, y) - H_{n-2} \circ \cdots \circ H_1(x, y')| \\
& \; \; \vdots \\
&\geq \left(\prod_{k = 1}^{n} (1 - 2^{-k})\right) |(x, y) - (x, y')|\\
&\geq \left(\prod_{k = 1}^{\infty} (1 - 2^{-k})\right)|y - y'|.
\end{align*}
We note that $\prod_{k = 1}^{\infty} (1 - 2^{-k}) > 0$. 
By virtue of the above inequality, $\phi$ is injective.

The map $\phi$ is surjective. To see this, we note that $\phi(C) \subseteq C$ is compact.
Therefore, if there is $z \in C \setminus \phi(C)$, then there must be $\gep > 0$ such that the disk of radius $\gep$ around $z$, 
$\D{D}(z,\gep)$, does not meet $\phi(C)$.
Choose $n \geq 0$ large enough so that $\| \phi_n - \phi \|_{\infty} < \gep$.
Then, we have $\phi(\phi_n^{-1}(z)) \in \D{D}(z, \gep)$, which is a contradiction.

Every $H_n$ (and therefore every $\phi_n$) sends vertical lines to vertical lines and is identity on the boundary of $[0, 1]^2$. 
Therefore $\phi$ also has this property. 
The convergence of $\gf_n$ to $\gf$ implies that for all $t\in C$, we have  
\[l^{\gf(X)}(t) = \lim_{n \to \infty} l^{\gf_n(X)}(t).\]

We claim that $\gf(X)=Y$.  To prove this, it is enough to show that $l^{\gf(X)}(t)=l^Y(t)$, for all $t\in C$. 

Fix $t \in C$. For each $n \geq 0$, choose $i_n$ with $t \in J_{n, i_n}$. 
By \refL{L:upper_sc_limit}, 
\[l^X(t) = \lim_{n \to \infty} \op{Max}(l^X, J_{n, i_n}), \quad l^Y(t) = \lim_{n \to \infty} \op{Max}(l^Y, J_{n, i_n}).\]
Therefore, if $l^X(t)\neq 0$, using \refE{E:incremental-fix}, we obtain 
\[ l^{\gf(X)}(t) = \lim_{n \to \infty} l^{\gf_n(X)}(t)
= \lim_{n \to \infty}  \left ( \frac{\op{Max}(l^Y, J_{n, i_n})}{\op{Max}(l^X, J_{n, i_n})} \cdot l^X(t) \right )
= \frac{l^Y(t)}{l^X(t)}\cdot l^X(t) = l^Y(t). \]

On the other hand, from the hypothesis, and using $\log (1+x) \leq x$ for $x\geq 0$, 
we note that for all $m \geq 1$, we have 
\begin{align*}
 \frac{\op{Max}(l^Y, J_{m, i_m})}{\op{Max}(l^X, J_{m, i_m})}
& = \prod_{n=1}^m  \frac{\op{Max}(l^Y,J_{n,i_n}) \op{Max}(l^X,J_{n-1,i_{n-1}})}
{\op{Max}(l^Y,J_{n-1,i_{n-1}}) \op{Max}(l^X,J_{n,i_n})} \\
& \leq \prod_{n=1}^m (1 + 1/2^{n+1}) \leq e^{1/2}. 
\end{align*}
Similarly, using $\log (1-x)\geq -2 x$ for $x\in (0, 1/4)$, we obtain 
\begin{align*}
 \frac{\op{Max}(l^Y, J_{m, i_m})}{\op{Max}(l^X, J_{m, i_m})}
 \geq \prod_{n=1}^m (1 - 1/2^{n+1}) \geq e^{-1}. 
\end{align*}
Thus, $l^X(t)=0$ if and only if $l^Y(t)=0$. However, since $\gf$ is a homeomorphism, $l^X(t)=0$ if and only if 
$l^{\gf(X)}(t)=0$. 
\end{proof}

\begin{proof}[Proof of \refT{T:straight-hairy-homeomorphic}]
First we note that it is enough to show that any two straight hairy Cantor sets based on the same Cantor set 
$C \subset \D{R}$ are ambiently homeomorphic. 
That is because, for any two Cantor sets $C$ and $C'$ in $\D{R}$, there is a homeomorphism 
$\gf : \D{R} \to \D{R}$ with $\gf(C) = C'$. 
The map $\gf$ may be extended to a homeomorphism of $\D{R}^2$ through $\gf(x, y) =  (\gf(x), y)$.
Given a straight hairy Cantor set $X$ based on $C'$, $\gf^{-1}(X)$ is a straight hairy Cantor set based on $C$.

Now consider two straight hairy Cantor sets $X$ and $Y$ based on the same Cantor set $C\subset \D{R}$ and 
with length functions $l^X$ and $l^Y$, respectively.
By \refP{P:straight-hairy-partitions-exist} there are nests of Cantor partitions 
$P_n^X= \cup_{i=1}^{k_n}[u^X_{n,i}, v^X_{n,i}]$ and $P_n^Y= \cup_{i=1}^{k_n} [u^Y_{n,i}, v^Y_{n,i}]$ 
shrinking to $C$, which enjoy the three properties in that proposition.
In particular, by properties (i) and (ii) in that proposition, there is a homeomorphism $\gy: \D{R} \to \D{R}$ 
such that for all $n\geq 0$ and all $1\leq i \leq k_n$, 
\[\gy(u^X_{n, i}) = u^Y_{n, i}, \quad \gy(v^X_{n, i}) = v^Y_{n, i}.\]
Evidently, $\gy$ maps $C$ onto $C$. We extend $\gy$ to a homeomorphism of $\D{R}^2$ through 
\[\gy(x, y) = (\gy(x), y).\]
It follows that $\gy^{-1}(Y)$ is a straight hairy Cantor set based on $C$. 
Moreover, by part (iii) of \refP{P:straight-hairy-partitions-exist}, the straight hairy Cantor sets $X$ and $\gy^{-1}(Y)$, as well as 
the nest of Cantor partitions $(P_n^X)_{n \geq 0}$ 
satisfy the hypothesis of \refP{P:straight-hairy-matching-partitions}. 
Therefore, we obtain a homeomorphism of $\D{R}^2$ which maps $X$ to $\gy^{-1}(Y)$. 
This completes the proof. 
\end{proof}

The following proposition is the main technical step towards the proof of \refT{T:straight-hairy-homeomorphic}. 
 
\begin{propo}\label{P:straight-AHCS-is-homeo-SHCS}
Let $C \subseteq \D{R}$ be a Cantor set, and let $l:C \to [0, \infty)$ be a function such that the set 
\[Z=\{(x, y) \in \D{R}^2 \mid x \in C, 0 \leq y \leq l(x) \}\]
satisfies the following properties: 
\begin{itemize}
\item[(i)] $Z$ is compact; 
\item[(ii)] $\{(x, l(x)) \mid x \in C, l(x)\neq 0\}$ is dense in $Z$.
\end{itemize}
Then, $Z$ is homeomorphic to a straight hairy Cantor set.
\end{propo}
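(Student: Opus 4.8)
The goal is to exhibit some straight hairy Cantor set homeomorphic to $Z$; by \refT{T:straight-hairy-homeomorphic} any one will do, so I am free to choose its base Cantor set. Two preliminary facts come first. Since $Z$ is compact it is closed and bounded, so $l$ is bounded and, exactly as in the proof of \refL{L:example}, closedness of $Z$ forces $\limsup_{t\to x,\, t\in C}l(t)\le l(x)$ for every $x\in C$, i.e. $l$ is upper semi-continuous. Next, $\{x\in C:l(x)=0\}$ is dense in $C$: if some clopen piece $K\subseteq C$ had $l>0$ throughout, then each $U_n=\{x\in K:l(x)<1/n\}$ is open in $K$ by upper semi-continuity and dense in $K$ --- otherwise a smaller clopen $K'\subseteq K$ would satisfy $\inf_{K'}l\ge 1/n>0$, contradicting property (ii) near the point $(x_0,\tfrac{1}{2n})$ for any $x_0$ interior to $K'$ --- so $\bigcap_n U_n\ne\emptyset$ by the Baire category theorem, contradicting $\bigcap_n U_n=\{x\in K:l(x)=0\}=\emptyset$. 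Property (ii) also gives that $\{x\in C:l(x)>0\}$ is dense, so $C\times\{0\}$ is precisely the closure of the point components of $Z$.

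For the main step I would mimic the architecture of the proof of \refP{P:straight-hairy-matching-partitions}: fix a nest of Cantor partitions $P_n=\bigcup_i J_{n,i}$ shrinking to $C$ with $|J_{n,i}|<2^{-n}|C|$, and construct homeomorphisms $g_n$ with $\|g_n-\mathrm{id}\|_\infty\le 2^{-n}$ whose compositions converge uniformly to a homeomorphism $g$ of $Z$ onto a straight hairy Cantor set $X$. Here $g_n$ should be designed so that, on each interval $J_{n,i}$, the maximum of the current length function is attained at an interior point of $J_{n,i}\cap C$ and equals the two-sided $\limsup$ of the length there; concretely one uses the bump functions of the current length function on the $J_{n,i}$ --- continuous by \refL{L:bump-functions} --- to rescale, and where necessary relocate, the hairs over $J_{n,i}\cap C$ so that the length ``tents up'' symmetrically to its maximum. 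Upper semi-continuity of the limiting length (a controlled decreasing limit of upper semi-continuous functions), compactness of $X$ (a homeomorphic image of the compact $Z$), persistence of the density of positive hairs through the construction, and the scale-by-scale repair of the asymmetry then yield properties (i)--(iii) of \refD{D:strainght-hairy-Cantor} for $X$.

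I expect the design of the maps $g_n$ to be the main obstacle, precisely at a ``one-sided tall hair'': a point $x_0$ with $l(x_0)$ strictly greater than $\limsup_{t\to x_0^+,\, t\in C}l(t)$. The hypotheses genuinely allow this --- density of the tips only forces $\limsup_{t\to x_0^-,\, t\in C}l(t)=l(x_0)$ on the tall side --- and it cannot be removed by any purely vertical rescaling, since that would have to collapse part of the hair over $x_0$ onto the base. The repair must actually move hairs: split the hairs accumulating on $x_0$ from the tall side into two families, assign one to each side, and reparametrise the short hairs on the deficient side to match. The delicate part is to carry this out simultaneously over all partition intervals and all scales while keeping the $g_n$ uniformly close to the identity and not destroying property (i) in the limit. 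An alternative is to first reparametrise each hair $\{x\}\times[0,l(x)]$ by a Whitney map on the hyperspace of subcontinua of $Z$ and then iterate the repair; in either case this hair-rearrangement is where the real work lies.
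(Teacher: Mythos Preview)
Your preliminaries are sound and your diagnosis of the obstacle is correct: the hypotheses permit one-sided tall hairs, vertical rescalings cannot repair them, and genuine horizontal motion of hairs is required. However, the proposal stops exactly where the work begins. Two concrete problems:

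First, your plan to invoke \refL{L:bump-functions} is not available. That lemma assumes the length function already satisfies Definition~\ref{D:strainght-hairy-Cantor}(ii)--(iii); for a general upper semi-continuous $l$ the bump function can jump, so the machinery of \refP{P:straight-hairy-matching-partitions} does not transfer. Second, your ``split the tall-side hairs into two families and reassign'' is a local repair aimed at individual bad points $x_0$. Such points need not be isolated or even countable, and fixing one can create another; you give no mechanism for controlling this across scales.

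The paper's proof sidesteps both issues by never targeting bad points at all. It works purely horizontally: take $C$ to be the middle-third Cantor set, and at stage $n$ choose a much finer level $m_n$ of the triadic partition. Density of tips (hypothesis (ii)) guarantees that inside each level-$m_{n-1}$ interval $I$, the maxima of $l_{n-1}$ over the level-$m_n$ sub-intervals come within $1/n$ of \emph{every} value in $[0,\op{Max}(l_{n-1},I)]$ --- this is the key use of (ii), via \refL{L:upper_sc_limit}. One then permutes those sub-intervals so that their maxima rise in steps of size $<1/n$ from below $1/n$ at the left end to the overall maximum and back down to below $1/n$ at the right end. The map $H_n$ is just the piecewise translation realising this permutation on $C$; it moves points by at most $3^{-m_{n-1}}$, so the composites converge to a homeomorphism $\phi$ of $C\times\D{R}$. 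Because each later $H_{n'}$ preserves level-$m_n$ intervals, the limiting length $l_*=l\circ\phi^{-1}$ has $\op{Max}(l_*,I_{m_n,j})=\op{Max}(l_n,I_{m_n,j})$ for all $n,j$, and the unimodal-with-small-steps arrangement at every scale delivers Definition~\ref{D:strainght-hairy-Cantor}(ii)--(iii) directly. No bump functions, no vertical rescaling, no local surgery --- just a global shuffle at each scale.
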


\begin{proof}
We aim to modify $Z$, by successively applying homeomorphisms $H_n: C \times \D{R} \to C\times \D{R}$ close to the identity,
so that in the limit we obtain a straight hairy Cantor set. 
That is, we build a chain of homeomorphisms as in 
\begin{displaymath}
\xymatrix{
Z \ar[r]^{H_0} & Z_0 \ar[r]^{H_1}  & Z_1 \ar[r]^{H_2} & Z_2 \ar[r]^{H_3} & \dots \ar[r]^{H_{n-1}} 
& Z_{n-1} \ar[r]^{H_n} & Z_{n} \ar[r]^{H_{n+1}} & Z_{n+1} \ar[r]^{H_{n+2}} & \dots 
}.    
\end{displaymath}
Each $H_n$ is a piecewise translation, shuffling the arcs in $Z_{n-1}$ so that any bump function of $Z_{n}$ has jump 
discontinuities of sizes at most $1/n$. We present the details below. 

Since all Cantor sets in the real line are homeomorphic, without loss of generality, we may assume that $C$ is the 
middle-third Cantor set. 
Let $P_0 = [0, 1]$, and for $n\geq 1$, recursively 
define\footnote{Given $A \subseteq \D{R}$ and $k \in \D{R}$, we define $A + k = \{a + k \mid a \in A\}$ 
and $kA = \{ka \mid a \in A\}$.}
\[P_n = \frac{P_{n-1}}{3} \cup (\frac{2}{3} + \frac{P_{n-1}}{3}). \]
Then, $(P_n)_{n = 0}^{\infty}$ is a shrinking sequence of Cantor partitions for $C$.
For $n \geq 0$, let $P_n=\bigcup_{i = 1}^{2^n} I_{n, i}$, where each $I_{n, i}$ is a connected component of $P_n$.
We may label the subscripts so that $\sup I_{n,i} < \inf I_{n,j}$, whenever $0\leq i < j \leq 2^n$. 
For $p > q \geq 0$, and $1\leq j \leq 2^q$, consider the set 
\[K(p; q,j) = \{ 1 \leq i \leq 2^p \mid I_{p, i} \subseteq I_{q,j} \}.\]
We have 
\[K(p; q,j) = \{ i \in \D{N} \mid 2^{p - q} (j-1) + 1 \leq i \leq 2^{p-q}j\}.\]

We divide the remaining argument into three steps. 

\medskip

{\em Step 1.} There are a sequence of integers $m_0=0< m_1< m_2< m_3< \dots$, homeomorphisms 
$H_n: C\times \D{R} \to C \times \D{R}$, and functions $l_n: C \to [0, \infty)$, for $n \geq 0$, such that 
for all $n \geq 1$, and all $1 \leq j \leq 2^{m_{n-1}}$ we have  
\begin{itemize}
\item[(i)] 
\begin{equation*}
H_{n}(I_{m_{n-1}, j} \cap C) = I_{m_{n-1}, j} \cap C;
\end{equation*}
\item[(ii)] for all $x\in C$, $l_0(x)= l(x)$, and $l_n(x) = l_{n-1} \circ H_n^{-1}(x)$; 
\item[(iii)] for every $k\in \{\min K(m_n; m_{n-1},j),\max K(m_n;m_{n-1},j)\}$, 
\[\op{Max}(l_{n}, I_{m_n, k}) < 1/n;\]
\item[(iv)] for all $i$ with $\min K(m_n; m_{n-1},j) \leq i < \max K(m_n;m_{n-1},j)$, 
\[|\op{Max}(l_{n}, I_{m_n, i}) - \op{Max}(l_{n}, I_{m_n, i+1})| < 1/n.\]
\end{itemize}

\medskip 

Let $m_0=0$, $H_0$ be the identity map, and $l_0 \equiv l$. 
Fix an arbitrary $n\geq 1$. Assume that $m_i$, $H_i$, and $l_i$ are define for all $i \leq  n-1$, and we aim to define $m_n$, 
$H_n$, and $l_n$. 

Consider the set 
\[Z_{n-1} = \{(x, y) \in \D{R}^2 \mid x \in C, 0 \leq y \leq l_{n-1}(x) \}.\]
It follows that $H_{n-1} \circ H_{n-2} \circ \dots \circ H_0$ is a homeomorphism from $Z$ onto $Z_{n-1}$. 
Since $Z$ is compact, $Z_{n-1}$ must be also compact. This implies that $l_{n-1}$ is upper semi-continuous. 

By virtue of the homeomorphism $H_{n-1} \circ \dots \circ H_0$ from $Z$ onto $Z_{n-1}$, and the hypothesis of the proposition, 
the set $\{(x, l_{n-1}(x)) \mid x \in C, l_{n-1}(x) \neq 0 \}$ is dense in $Z_{n-1}$.
Therefore, for every $j$ with $1 \leq j \leq 2^{m_{n-1}}$, there is a finite set  
$A_j \subseteq I_{m_{n-1, j}} \cap C$, 
such that 
\begin{equation}\label{E:ASHCS2}
\D{D}(l_{n-1}(A_j), 1/5n) \supseteq [0, \op{Max}(l_{n-1}, I_{m_{n-1}, j})].
\end{equation}
By Lemma~\ref{L:upper_sc_limit}, for every $x\in A_j$ we have 
\[l_{n-1}(x) = \lim_{m \to \infty, x \in I_{m, i}} \op{Max}(l_{n-1}, I_{m, i}).\]
Therefore, we may find an integer $m_n > m_{n-1}$ such that for all $j$ with $1 \leq j \leq 2^{m_{n-1}}$, we have 
\begin{equation}\label{E:ASHCS1}
\begin{aligned}
\bigcup_{i\in K(m_n; m_{n-1}, j)} \D{D}\big(\op{Max}(l_{n-1}, I_{m_{n}, i}), 1/4n\big) 
\supseteq \D{D}(l_{n-1}(A_j), 1/5n).
\end{aligned}
\end{equation}

We define $H_n$ on each $I_{m_{n-1},j} \cap C$, for $1\leq j \leq 2^{m_{n-1}}$, so that it induces a 
homeomorphism of $I_{m_{n-1}, j} \cap C$. 
Fix such an integer $j$. 
By virtue of Equations \eqref{E:ASHCS2} and \eqref{E:ASHCS1}, there is a permutation 
\[\gs_j : K(m_n; m_{n-1}, j) \to K(m_n; m_{n-1},j)\] 
such that 
\begin{itemize}
\item  for $k \in \{\min K(m_n; m_{n-1}, j), \max K(m_n; m_{n-1},j)\}$, we have 
\[\op{Max}(l_{n-1}, I_{m_n, \gs_j(k)}) < 1/n,\]
\item for all integers $i$ with $\min K(m_n; m_{n-1}, j) \leq i < \max K(m_n; m_{n-1},j)$, we have
\[|\op{Max}(l_{n-1}, I_{m_n, \gs_j(i)}) - \op{Max}(l_{n-1}, I_{m_n, \gs_j(i+1)})| < 1/n.\]
\end{itemize}
For instance, to identify such $\gs_j$, one may first find a permutation $\gs_j'$ of $K(m_n; m_{n-1}, j)$ 
such that $\op{Max}(l_{n-1}, I_{m_n,\gs'(i)}) \leq \op{Max}(l_{n-1}, I_{m_n,\gs'(j)})$, whenever $i< j$ in $K(m_n; m_{n-1}, j)$. 
Then, compose $\gs_j'$ with the following permutation of $K(m_n; m_{n-1}, j)$, 
\begin{align*}
2^{m_n-m_{n-1}}(j-1)+(1, 2, 3, \dots , & 2^{m_n-m_{n-1}})  \\
& \mapsto 2^{m_n-m_{n-1}}(j-1)+(1, 3,5, \dots, 6, 4, 2).
\end{align*}

Once we have $\gs_j$, there is a unique homeomorphism $H_n$ such that for all $i$ in $K(m_n; m_{n-1},j)$
\[H_n(I_{m_n, i} \cap C) = I_{m_n, \gs_j^{-1}(i)} \cap C,\]
and each $H_n$ is a translation by a constant on each of $I_{m_n, i} \cap C$. 

Carrying out the above process for all $1 \leq j \leq 2^{m_{n-1}}$, we obtain a homeomorphism $H_n$. 
Then, we define 
\[l_n(x) = l_{n-1}(H_n^{-1}(x)), \quad \forall x\in C.\]
This completes the proof of Step 1. 

\medskip

Consider the homeomorphisms $\phi_n: C \times \D{R} \to C \times \D{R}$, defined according to 
\[\phi_0=H_0, \quad \phi_1= H_1 \circ H_0, \quad \phi_n= H_n \circ H_{n-1} \circ \dots \circ H_0, \; \forall n\geq 2.\]

\medskip

{\em Step 2.} The sequence $\phi_n$, for $n\geq 0$, uniformly converges to a homeomorphism 
\[\phi: C \times \D{R} \to C \times \D{R}.\]   

\medskip

By property (i) in Step 1, for all $n\geq 1$ and all $x \in C$, we have
\[|H_n(x) - x| \leq \max\{\diam(I_{m_{n-1}, j}) \mid 1 \leq j \leq 2^{m_{n-1}}\} = 3^{-m_{n-1}}.\]
Hence, 
\[\|\phi_{n} - \phi_{n-1} \|_{\infty} = \| H_{n} - \op{Id} \|_{\infty} \leq 3^{-m_{n-1}}.\]
Therefore, $\phi_n$ forms a Cauchy sequence, and converges to a continuous map $\phi: C \times \D{R} \to C\times \D{R}$. 

To prove that $\phi$ is surjective, note that $\phi(C) \subseteq C$ is compact.
Therefore, if there is $z$ in $C \setminus \phi(C)$, then there is $\gep > 0$ such that 
\[\D{D}(z, \gep) \cap C \subseteq C \setminus \phi(C).\]
Choose $n \geq 0$ so that $\| \phi_n - \phi \|_{\infty} < \gep$.
Then, we have $\phi(\phi_n^{-1}(z)) \in D_{\gep}(z)$, which is a contradiction.

To prove that $\phi$ is injective, fix arbitrary points $x \neq y$ in $C$.
There are $n > 0$ and $1 \leq i < i' \leq 2^{m_n}$ such that $x \in I_{m_n, i}$ and $y \in I_{m_n, i'}$.
By the above construction, there are distinct integers $j$ and $j'$ in $[1, 2^{m_n}]$ such that $\phi_n(x) \in I_{m_n, j}$ and 
$\phi_n(y) \in I_{m_n, j'}$.
On the other hand, for all $n' > n$, we have
\[H_{n'}(I_{m_n, j} \cap C) = I_{m_n, j} \cap C,\quad 
H_{n'}(I_{m_n, j'} \cap C) = I_{m_n, j'} \cap C. \]
Therefore, $\phi(x) = \cdots \circ H_{n+2} \circ H_{n+1} \circ \phi_n(x) \in I_{m_n, j}$ and similarly 
$\phi(y) \in I_{m_n, j'}$.
In particular, $\phi(x) \neq \phi(y)$.

\medskip

{\em Step 3.} The set $\phi(Z)$ is a straight hairy Cantor set. 

\medskip

Consider the function $l_* = l \circ \phi^{-1}: C \to [0, \infty)$, and the set 
\[Z_* = \{ (x, y) \in \D{R}^2 \mid x \in C, 0 \leq y \leq l_*(x) \}. \]
By Step 2, $\phi: Z \to Z_*$ is a homeomorphism.

We claim that for all $n \geq 0$ and $1 \leq j \leq 2^{m_n}$, we have
\[\op{Max}(l_*, I_{m_n, j}) = \op{Max}(l_n, I_{m_n, j}).\]
Recall that $l_n = l\circ \phi_n^{-1}$. To prove the above property, it is sufficient to show that 
$\phi^{-1}(I_{m_n,j} \cap C)= \phi_n^{-1}(I_{m_n,j} \cap C)$, or equivalently, 
$I_{m_n,j} \cap C = \phi \circ \phi_n^{-1}(I_{m_n,j} \cap C)$. 
However, by property (i) in Step 1, for all $n' > n$, $H_{n'}(I_{m_n,j} \cap C) = I_{m_n,j} \cap C$. 
Using $\phi \circ \phi_n^{-1} = \cdots \circ H_{n+2} \circ H_{n+1}$, we conclude that 
$\phi \circ \phi_n^{-1}$ preserves $I_{m_n,j} \cap C$.

Since $\phi: Z\to Z_*$ is a homeomorphism, the set $\{x \in C \mid l_*(x) \neq 0\}$ is dense in $C$.
This implies property (i) in \refD{D:strainght-hairy-Cantor}. 

Let $z$ be an end point of $C$. There are $p \geq 0$ and $1 \leq j_p \leq 2^{m_p}$ such that $z \in \partial I_{{m_p}, j_p}$. 
Then, for all $q> p$, $z \in I_{m_q, j_q}$ with $j_q \in \{\min K(m_q; m_{q-1}, j_{q-1}), \max K (m_q; m_{q-1}, j_{q-1})\}$. 
By property (iii) in Step 1, we must have 
\[\op{Max}(l_*, I_{m_q, j_q}) < 1/q.\]
Then, by \refL{L:upper_sc_limit} we have $l_*(z)= \lim_{q \to \infty} \op{Max}(l_*, I_{m_q, j_q}) = 0$.
This completes the proof of property (ii) in \refD{D:strainght-hairy-Cantor}.

Fix an arbitrary $z \in C$ with $l_*(z) > 0$, and let $\gep > 0$ be arbitrary.
Assume that $(j_n)_{n \geq 0}$ are the sequence of integers with $z \in I_{m_n, j_n}$, for all $n \geq 0$.
Using \refL{L:upper_sc_limit}, there exists $n > \max(2/\gep, 1/l_*(z))$ such that $3^{-m_{n-1}} < \gep$, and
\[|\op{Max}(l_*, I_{m_n, j_n}) - l_*(z)| < \gep/2.\]
On the other hand, since $\op{Max}(l_*, I_{m_n, j_n}) \geq l_*(z) > 1/n$, 
\[j_n \notin \{\min K(m_n; m_{n-1}, j_{n-1}), \max K(m_n; m_{n-1}, j_{n-1})\}.\] 
Therefore, 
\[I_{m_n, j_n-1} \cup I_{m_n, j_n+1} \subseteq I_{m_{n-1}, j_{n-1}},\]
and by property (iv) in Step 1, 
\[|\op{Max}(l_*, I_{m_n, j_n-1}) - \op{Max}(l_*, I_{m_n, j_n})| < 1/n,\]
\[|\op{Max}(l_*, I_{m_n, j_n}) - \op{Max}(l_*, I_{m_n, j_n+1})| < 1/n.\]
Let $\ga$ be a point in $I_{m_n, j_n-1} \cap C$ with $l_*(\ga) = \op{Max}(l_*, I_{m_n, j_n-1})$.
Then, $\ga < z$,
\[|\ga - z| < \diam(I_{m_{n-1}, j_{n-1}}) = 3^{-m_{n-1}} < \gep,\]
and 
\[|l_*(z) - l_*(\ga)| \leq |l_*(z)- \op{Max}(l_*, I_{m_n, j_n})| + |\op{Max}(l_*, I_{m_n, j_n})- l(\ga)| < \gep/2+1/n < \gep.\]
Similarly, we may find $\gb \in C$ such that
\[\gb > z,\quad |\gb - z| < \gep,\quad |l_*(\gb) - l_*(z)| < \gep.\]
This completes the proof of property (iii) in \refD{D:strainght-hairy-Cantor}.
\end{proof}

We will need the following Lemma for the proof of \refT{T:uniformization}. 

\begin{lem}\label{L:straight_HCS_is_HCS}
Let $C \subset \D{R}$ be a Cantor set, $l: C \to [0, +\infty)$, and 
\[X = \{(x, y) \in \D{R}^2 \mid x \in C, 0 \leq y \leq l(x) \}.\]
Then, $X$ is a straight hairy Cantor set if and only if the following properties hold: 
\begin{itemize}
\item[(i)] $X$ is compact;
\item[(ii)] the set $\{x \in C \mid l(x) \neq 0\}$ is dense in $C$;
\item[(iii)] any point in $\{ (x, y)\in X \mid x \in C, 0 < y < l(x) \}$ is not accessible from $\D{R}^2 \setminus X$.
\end{itemize}
\end{lem}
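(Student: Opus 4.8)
The forward direction is easy: if $X$ is a straight hairy Cantor set then (i) and (ii) are immediate from the remarks following \refD{D:strainght-hairy-Cantor} (compactness and density of $\{l>0\}$ follow from axioms (i)--(iii)), and for (iii) one observes that every point $(x,y)$ with $0<y<l(x)$ lies on a vertical arc $\{x\}\times[0,l(x)]$ which, by properties (ii) and (iii), is squeezed between arcs over points $t\in C$ with $t\to x$ on both sides and $l(t)\to l(x)$; hence any path in $\D{R}^2\setminus X$ approaching $(x,y)$ would have to thread between these nearby vertical arcs, which is impossible once it gets close enough. So I would spend essentially no effort on ``$\Rightarrow$'' and record it in a sentence or two.

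For the converse, suppose (i)--(iii) hold; I must verify properties (i), (ii), (iii) of \refD{D:strainght-hairy-Cantor} for $l$. Property (i) of the definition is exactly hypothesis (ii). The plan for the remaining two is to first upgrade (iii) into a statement about $\limsup$ of $l$. The key observation is: if $x\in C$ and there were a sequence $t_i\to x^+$ in $C$ with $\limsup l(t_i) = L < l(x)$, then for $y$ with $L < y < l(x)$ the point $(x,y)$ would be accessible from the right from $\D{R}^2\setminus X$ — one builds an explicit access path lying in the strip between $x$ and the $t_i$, below height $l(x)$ but above the arcs over the $t_i$ once they are small, using compactness of $X$ (hypothesis (i)) to control $l$ from above near $x$ via upper semicontinuity. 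This contradicts (iii). Since upper semicontinuity (from (i)) always gives $\limsup_{t\to x} l(t)\le l(x)$, we also get $\limsup_{t\to x^\pm} l(t)\le l(x)$ trivially, so the only content is the reverse inequality, which is what the accessibility argument supplies when $l(x)>0$.

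This handles property (iii) of the definition at points with $l(x)>0$. For points with $l(x)=0$, property (iii) of the definition reads $\limsup_{t\to x^\pm}l(t)=0$, and property (ii) of the definition (the endpoint case) requires the same plus $l(x)=0$. So I still need: \emph{every endpoint $x$ of $C$ has $l(x)=0$}, and \emph{every $x\in C$ with $l(x)=0$ has $\limsup_{t\to x}l(t)=0$}. For the endpoint claim: if $x$ is, say, a left endpoint (so $(x-\gd,x)\cap C=\emptyset$) with $l(x)>0$, then every point $(x,y)$ with $0<y<l(x)$ is accessible from the left half-plane $\{(\xi,\eta):\xi<x\}$, which is entirely in $\D{R}^2\setminus X$, contradicting (iii); hence $l(x)=0$. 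For an $x$ with $l(x)=0$ that is \emph{not} an endpoint, I must rule out $\limsup_{t\to x^+}l(t)=L>0$: by hypothesis (ii) there are points $s\in C$ arbitrarily close to $x$ on the left with $l(s)>0$, and then a point just above $(x,0)$ but below $\min(l$-values near $x$ on the right$)$ would be accessible from the left through the ``gap'' at $x$ — again contradicting (iii). One must be slightly careful to produce a genuine access arc; the clean way is to use the compactness of $X$ to choose a small rectangle $R=[x-\eta,x+\eta]\times[0,\delta]$ whose intersection with $X$ is controlled, note $(x,0)$ is on the boundary of $R$, and build the path inside $R$ between the vertical fibers of $X$.

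**Main obstacle.** The only real work is the accessibility/non-accessibility arguments, i.e.\ converting the ``$\limsup$'' gap into an explicit curve in $\D{R}^2\setminus X$ terminating at the target point. The subtlety is that $C$ is totally disconnected and the set $\{x: l(x)>0\}$ is only dense, so one cannot simply walk along a horizontal line; one must weave a path that stays strictly inside complementary gaps of $C$ vertically high enough to clear the short hairs. I expect to handle this by fixing a nested sequence of complementary intervals of $C$ accumulating on $x$ (on the appropriate side), on each of which $l\equiv 0$, and connecting consecutive ``landing segments'' by short vertical/horizontal jogs whose heights are chosen below $l(x)$ (or below the relevant $\limsup$ threshold) but above $\op{Max}(l,\cdot)$ on the tiny $C$-pieces in between — feasible precisely because those maxima tend to $0$ by \refL{L:upper_sc_limit} together with the $\limsup$ hypothesis being violated. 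Everything else (identifying definition-property (i) with hypothesis (ii), the forward direction, upper semicontinuity from compactness) is bookkeeping.
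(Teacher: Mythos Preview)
Your approach is essentially the same as the paper's, and is correct. A couple of simplifications will save you effort:

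\textbf{The ``main obstacle'' is not an obstacle.} In the converse direction, suppose $x$ is not an end point and $l^+(x)=\limsup_{t\to x^+,\,t\in C} l(t) < l(x)$. Pick $y$ with $l^+(x)<y<l(x)$. Then there exists $\gep>0$ such that $l(t)<y$ for all $t\in C\cap(x,x+\gep)$. The horizontal segment $\{(t,y): x<t\le x+\gep\}$ lies entirely in $\D{R}^2\setminus X$ (for $t\notin C$ trivially, and for $t\in C$ because $l(t)<y$), and it lands at $(x,y)$. No weaving through gaps of $C$ is needed; a straight horizontal segment is the access curve. The same works from the left, and for the endpoint case you already identified the even simpler access from the empty side.

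\textbf{The $l(x)=0$ non-endpoint case is vacuous.} You correctly note that compactness of $X$ gives upper semicontinuity of $l$, hence $\limsup_{t\to x^\pm} l(t)\le l(x)$ automatically. When $l(x)=0$ this already forces both one-sided $\limsup$'s to be $0$, so property (iii) of \refD{D:strainght-hairy-Cantor} holds at such points with no further argument. Your separate treatment of this case (ruling out $\limsup_{t\to x^+} l(t)=L>0$) is attacking an impossibility; you can drop it entirely.

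The paper's proof is in fact terser than your plan: it simply asserts the forward direction and, for the converse, asserts the accessibility in one line each for endpoints and non-endpoints, relying implicitly on the horizontal-segment observation above.
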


\begin{proof}
Assume that $X$ is a straight hairy Cantor set. 
By the uniqueness of the length function of $X$, $l^X$, we must have $l^X=l$. 
Thus, properties (i), (ii), and (iii) in the above lemma hold. 

Assume that $X$ satisfies the three properties in the lemma. We get property (i) in \refD{D:strainght-hairy-Cantor} 
for free. 
To see property (ii) in \refD{D:strainght-hairy-Cantor}, let $x \in C$ be an end point. If $l(x) \neq 0$, 
any $(x,y) \in X$ with $0 < y < l(x)$ will be accessible from $\D{R}^2 \setminus X$. 
This contradiction with property (iii) in the lemma shows that we must have $l(x)=0$. 

Now assume that $x\in C$ is not an end point. Define
\[l^+(x) = \limsup_{t \to x^+, t \in C} l(t),\quad l^-(x) = \limsup_{ t\to x^-, t \in C} l(t).\]
By the compactness of $X$, $l^+(x)$ and $l^-(x)$ are finite values. 
Moreover, we must have $l(x) \geq l^+(x)$ and $l(x) \geq l^-(x)$.  
If $l(x) > l^+(x)$, then any $(x, y)\in X$ with $l(x) > y > l^+(x)$ will be accessible from the right hand side. 
Similarly, if $l(x) > l^-(x)$, then any $(x, y)\in X$ with $l(x) > y > l^-(x)$ will be accessible from the left hand side. 
These contradict property (iii) in the lemma, so we must have $l(x) = l^+(x)= l^-(x)$.
Thus, we also have property (iii) in \refD{D:strainght-hairy-Cantor}.
\end{proof}

\begin{propo}\label{P:dense-ends}
Assume that 
\[X = \{(x, y) \in \D{R}^2 \mid x \in C, 0 \leq y \leq l(x) \}\] 
is a straight hairy Cantor set, where $C\subset \D{R}$ is a Cantor set, and $l:C \to [0,+\infty)$ is the length functions of $X$. 
Then, we have 
\begin{itemize}
\item[(i)] the set $\{l(x) \mid x\in C\}$ is dense in the interval $[0, \sup_{x\in C} l(x)]$; 
\item[(ii)] the set $\{(x, l(x)) \mid x\in C\}$ is dense in $X$. 
\end{itemize}
\end{propo}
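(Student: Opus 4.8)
The plan is to prove both parts essentially simultaneously, deducing (i) from a local version of the density statement and then (i) together with property (iii) of \refD{D:strainght-hairy-Cantor} gives (ii). I would first record the notation $L = \sup_{x\in C} l(x)$, which is finite by the remarks following \refD{D:strainght-hairy-Cantor} since $X$ is compact. For part (i), fix a value $v \in (0, L)$ and a target error $\gep > 0$; I want to produce $x \in C$ with $|l(x) - v| < \gep$. Pick some $x_0 \in C$ with $l(x_0) > v$ (possible since $v < L$). The idea is to run a ``shrinking interval'' argument: starting from a small closed interval $I$ around $x_0$ with $\op{Max}(l, I) > v$, I will repeatedly pass to a proper subinterval on which the max is still $> v$ but which is ``as small as possible'' with that property, using the bump function machinery of \refL{L:bump-functions} to see that the max of $l$ varies continuously as one slides an endpoint.

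More concretely, here is the key step. Fix a Cantor partition $P$ for $C$ with $x_0$ in one of its intervals $J = [u, w]$, chosen fine enough that $\op{Max}(l, J) > v$ but with $|J|$ small; let $B = B^l_J$ be a bump function on $J$ associated to some $m_J$ with $l(m_J) = \op{Max}(l,J) > v$. By \refL{L:bump-functions}, $B$ is continuous and increases from $l(u) = 0$ to $\op{Max}(l,J)$ on $[u, m_J]$, so by the intermediate value theorem there is a point $c \in (u, m_J)$ with $B(c) = v$, i.e. $\op{Max}(l, [u, c]) = v$. Now $\op{Max}(l, [u,c])$ is realized at some $m \in [u,c]\cap C$ with $l(m) = v$ exactly — so in fact we get a point of $C$ with $l$-value \emph{exactly} $v$, giving more than the density claim. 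I would double-check the edge cases $v = 0$ (immediate from property (ii), the density of endpoints, or simply property (i)'s complement) and the supremum itself being attained (upper semicontinuity on the compact base), so $\{l(x)\mid x\in C\}$ actually equals $[0, L]$.

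For part (ii), let $(x_\infty, y_\infty) \in X$ be arbitrary, so $x_\infty \in C$ and $0 \le y_\infty \le l(x_\infty)$, and fix $\gep > 0$; I want a point of the form $(x, l(x)) \in X$ within $\gep$. If $y_\infty = l(x_\infty)$ there is nothing to do, so assume $y_\infty < l(x_\infty)$. Using part (i) applied on a small neighborhood — or rather a localized version: the same bump-function argument restricted to a partition interval $J \ni x_\infty$ of diameter $< \gep$ with $\op{Max}(l, J) \ge l(x_\infty) > y_\infty$ — I produce $x \in J\cap C$ with $l(x) = y_\infty$ (or within $\gep$ of it); then $|x - x_\infty| \le |J| < \gep$ and $|l(x) - y_\infty| < \gep$, so $(x, l(x))$ is within $2\gep$ of $(x_\infty, y_\infty)$ in the plane. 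Rescaling $\gep$ finishes it. Note property (iii) of \refD{D:strainght-hairy-Cantor} is what guarantees $J$ can be taken with $\op{Max}(l,J)$ close to $l(x_\infty)$ and still containing enough of $C$ near $x_\infty$ — actually one only needs $\op{Max}(l,J) \ge l(x_\infty)$, which is automatic since $x_\infty \in J \cap C$.

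The main obstacle I anticipate is purely bookkeeping: making sure that when I slide an endpoint of an interval to hit the value $v$ via the intermediate value theorem, the resulting point can be taken inside $C$ (so that $\op{Max}(l, [u,c])$ is genuinely attained at a point of $C$ with $l$-value $v$), and that passing to a subinterval of small diameter is compatible with keeping $\op{Max}$ above the threshold. This is handled by always working inside the intervals of a sufficiently fine Cantor partition and invoking \refL{L:upper_sc_limit} to say $\op{Max}(l, I_n) \to l(z)$ along shrinking intervals, together with the fact from \refL{L:bump-functions} that $x \mapsto \op{Max}(l, [u,x])$ is continuous — there are no real analytic difficulties, just care with one-sided limits and endpoints. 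Everything else follows from upper semicontinuity and compactness of the base Cantor set $C$.
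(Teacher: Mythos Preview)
Your proposal is correct and takes a genuinely different route for part (i). The paper argues by contradiction: if the range of $l$ misses an interval $(a,b)$, the set $C' = \{x \in C \mid l(x) \geq b\}$ is compact, but property (iii) of \refD{D:strainght-hairy-Cantor} shows that every point of $C'$ has a strictly smaller point of $C'$ to its left, contradicting the existence of a minimum. Your argument instead uses the continuity of the bump function from \refL{L:bump-functions} together with the intermediate value theorem to \emph{construct} a point with $l$-value exactly $v$; this is more direct and actually yields the stronger conclusion $\{l(x) \mid x \in C\} = [0, L]$, which the paper mentions in the introduction but does not prove in this proposition. The price is that your argument leans on \refL{L:bump-functions}, whereas the paper's argument is self-contained from the axioms.

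For part (ii) the two approaches coincide: both localize to a small interval $J$ around the base point (the paper by noting that the restriction is again a straight hairy Cantor set, you by running the bump-function argument directly on $J$), and then invoke the part (i) mechanism to hit the desired height. Your observation that $\op{Max}(l, J) \geq l(x_\infty)$ is automatic is exactly what makes the localization painless; no appeal to property (iii) is needed at that step.
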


\begin{proof}
As $X$ is compact, the value $m= \sup_{x\in C} l(x)$ is realised and is finite. 
Assume in the contrary that there is an open interval $(a,b) \subset [0,m]$ such that for all $x\in C$, $l(x) \notin (a,b)$. 
Let $C'= \{x\in C \mid l(x) \geq b\}$.
Since $X \cap \{(x, y) \in \D{R}^2 \mid y \geq b\}$ is compact, $C'$ must be compact. 

By property (ii) in \refD{D:strainght-hairy-Cantor} every $x \in C'$ is not an end point of $C$. 
Thus, by property (iii) in that definition, for every $x\in C'$ there is $x' \in C$ such that $x'< x$ and $|l(x') - l(x)| < (b-a)/2$. 
Since $l(x)\geq b$, and $l(x') \notin (a,b)$, we must have $l(x') \geq b$. 
Thus, $x'\in C'$. 
In other words, for every $x\in C'$, there is $x'\in C'$ with $x' < x$. 
This is not possible, since $C'$ is a compact set. 

To prove item (ii), let $(x,y)$ be an arbitrary point in $X$, and fix an arbitrary $\gep>0$. 
There are $\gd_1$ and $\gd_2$ in $(0, \gep)$ such that $(x-\gd_1, x+ \gd_2) \cap C$ is a Cantor set in $\D{R}$. 
Let $D= (x-\gd_1, x+ \gd_2) \cap C$. 
From \refD{D:strainght-hairy-Cantor}, one may see that the set 
\[\{(x, y) \in \D{R}^2 \mid x \in D, 0 \leq y \leq l(x) \},\] 
is a straight hairy Cantor set. 
As $(x,y)$ belongs to the above set, $y\leq \sup_{t\in D} l(t)$. 
Applying item (i) to the above straight hairy Cantor set, we conclude that there is $x'\in D$ with $|l(x')-y| < \gep$. 
Therefore, 
\[|(x,y)- (x', l(x'))| \leq |x-x'|+ |y- l(x')| \leq 2 \gep. \qedhere\] 
\end{proof}


\section{Height functions and base curves}\label{S:height-base}
In this section we establish some topological features of hairy Cantor sets. 
These will be used in \refS{S:uniformisation} to prove \refT{T:uniformization}.

Recall that a curve $\gamma: [0, 1) \to \D{R}^2$ \textbf{lands} at $z \in \D{R}^2$, if $\lim_{t\to 1} \gamma(t)$ exists 
and is equal to $z$. A point $z \in \D{R}^2$ is called \textbf{accessible} from $\gO \subset \D{R}^2$, 
if there is a curve $\gamma: [0, 1) \to \gO$ which lands at $z$. 

Given a set $X \subset \D{R}^2$ satisfying axioms $A_1$ to $A_4$ in the introduction, the \textbf{base} of $X$ is the unique 
Cantor set $B \subset X$ which contains all point components of $X$. 
We shall often say that $X$ is \textbf{based} on the Cantor set $B$. 

Any arc \footnote{A Jordan arc, or simply an arc, in $\D{R}^2$ is the image of a continuous and injective map 
$\gamma:[0,1] \to \D{R}^2$. The \textit{end points} of this Jordan arc are $\gamma(0)$ and $\gamma(1)$.} 
component $c$ of $X$ has two distinct end points, one of which belongs to $B$. 
The end point of $c$ which belongs to $B$ is called the \textbf{base} of $c$. 
The other end point of $c$, which does not belong to $B$, is called the \textbf{peak} of $c$. 
If $c$ is a point component of $X$, we may either refer to $c$ as the base of $c$ or as the peak of $c$.
For $x\in X$, the \textbf{base} of $x$ is defined as the base of the component of $X$ containing $x$, 
and similarly, the \textbf{peak} of $x$ is defined as the peak of that component. 
We may define the \textbf{base map} and the \textbf{peak map}
\[b: X \to B, \quad p: X \to X,\]
where $b(x)$ is the base of $x$ and $p(x)$ is the peak of $x$. 

Slightly abusing the notation, we say that $x\in X$ is a \textbf{base point}, if $b(x)=x$, and we say that $x\in X$ 
is a \textbf{peak point} if $p(x)=x$.
Since $b$ and $p$ are the identity map on $B$, any element of $B$ is both a base point, and a peak point. 
It follows from axiom $A_4$ that $b$ is continuous on $X$. However, $p$ is far from continuous. 

The base points of $X$ and the peak points of $X$ are defined solely using the topology of $X$.
It follows that any homeomorphism of a hairy Cantor set must send base points to base points, 
and peak points to peak points.

Assume that $X \subset \D{R}^2$ satisfies axioms $A_1$ to $A_4$, and $B$ is the base Cantor set of $X$. 
A function $h: X \to [0, +\infty)$ is called a \textbf{height function} on $X$, if the following properties hold:
\begin{itemize}
\item[(i)] h is continuous on $X$;
\item[(ii)] $B=h^{-1}(0)$;
\item[(iii)] $h$ is injective on any connected component of $X$. 
\end{itemize}
For instance, for a straight hairy Cantor set $X$, $h(x,y)=y$ is a height function on $X$.
However, there are many other height functions on $X$.
For the proof of \refT{T:uniformization} we require a height function on a given hairy Cantor set. 
To this end we employ a general theory of Whitney maps, which we explain below. 

Assume that $K$ is a compact metric space, and let $\op{CL}(K)$ denote the set of all non-empty compact 
subsets of $K$. The space $\op{CL}(K)$ may be equipped with the \textbf{Hausdorff topology} induced from the 
Hausdorff metric. That is, given non-empty compact sets $K_1$ and $K_2$ in $K$, the Hausdorff distance between 
$K_1$ and $K_2$ is defined as the infimum of all $\gep > 0$ such that $\gep$ neighbourhood of $K_1$ contains $K_2$ 
and $\gep$ neighbourhood of $K_2$ contains $K_1$. 
A function $\gm: \op{CL}(K)\to [0, +\infty)$ is called a \textbf{Whitney map} for $K$, if 
\begin{itemize}
\item[(i)] $\gm$ is continuous on $\op{CL}(K)$;
\item[(ii)] $\mu(\{x\}) = 0$ for every $x \in K$;
\item[(iii)] for every $K_1$ and $K_2$ in $\op{CL}(K)$ with $K_1 \subsetneq K_2$, we have $\mu(K_1) < \mu(K_2)$. 
\end{itemize}

In \cite{Wh33}, Whitney proves the existence of maps $\gm$ satisfying the above properties. 
The existence of a Whitney map for a compact subset of an Euclidean space is a classical result in topology. 
One may refer to the general reference \cite[Excercise 4.33]{Nadler92}, or the extensive monograph on the hyperspaces 
\cite{IlNa99}.

\begin{propo}\label{P:X-admits-height}
Any compact metric space $X$ which satisfies axioms $A_1$ to $A_4$ admits a height function.
\end{propo}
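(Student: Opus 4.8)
The plan is to construct a height function directly from a Whitney map on the hyperspace $\op{CL}(X)$, feeding into it the continuously varying family of base arcs of $X$ provided by axiom $A_4$. First I would fix a Whitney map $\mu : \op{CL}(X) \to [0, +\infty)$; such a map exists because $X$ is a compact metric space \cite{Wh33} (see also \cite{Nadler92,IlNa99}). For each $x \in X$, let $\ga_x \subset X$ denote the unique arc in $X$ joining $x$ to $B$, with the convention that $\ga_x = \{x\}$ when the component of $x$ is a single point. In every case $\ga_x$ is either a singleton or a closed sub-arc of a component of $X$, hence a non-empty compact subset of $X$, so $\ga_x \in \op{CL}(X)$, and I would define
\[ h : X \to [0, +\infty), \qquad h(x) = \mu(\ga_x). \]

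Before verifying that $h$ is a height function I would record the elementary fact, which follows from $A_4$, that $\ga_x = \{x\}$ precisely when $x \in B$; equivalently, that each component of $X$ meets $B$ in exactly one point, its base. Indeed, if some $z \in B$ were an interior point or the non-base end point of an arc component $c$, then choosing point components $y_i \to z$ (possible since $B$ is the closure of the point components of $X$) and applying $A_4$ would force the singletons $\ga_{y_i} = \{y_i\}$ to converge, in the Hausdorff metric, to the non-degenerate sub-arc $\ga_z \subsetneq c$, which is impossible. With this in hand, the three defining properties follow quickly. Continuity of $h$ holds because $x \mapsto \ga_x$ is, by axiom $A_4$, sequentially continuous from $X$ into $\op{CL}(X)$ with the Hausdorff metric, hence continuous (both spaces being metric), so that $h = \mu \circ (x \mapsto \ga_x)$ is continuous. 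The identity $B = h^{-1}(0)$ holds since for $x \in B$ we get $h(x) = \mu(\{x\}) = 0$ by property (ii) of a Whitney map, while for $x \notin B$ the component of $x$ is a non-degenerate arc with base $b(x) \neq x$, so $\{b(x)\} \subsetneq \ga_x$ and property (iii) of a Whitney map gives $h(x) = \mu(\ga_x) > \mu(\{b(x)\}) = 0$. Finally, injectivity of $h$ on point components is trivial, and on an arc component $c$ parametrised by $\gga : [0,1] \to c$ with $\gga(0)$ the base of $c$, for $0 \leq s < t \leq 1$ one has $\ga_{\gga(s)} = \gga([0,s]) \subsetneq \gga([0,t]) = \ga_{\gga(t)}$, so strict monotonicity of $\mu$ yields $h(\gga(s)) < h(\gga(t))$; thus $h$ is strictly increasing along $c$ from its base and in particular injective on $c$.

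I do not anticipate a genuine obstacle: the whole statement is carried by axiom $A_4$, which supplies the Hausdorff-continuity of the base-arc map $x \mapsto \ga_x$ and hence the continuity of $h$, together with the defining properties of a Whitney map, which supply $h^{-1}(0) = B$ and the injectivity of $h$ along components. The only mildly delicate point is the well-definedness of $\ga_x$ — equivalently, that a component of $X$ meets $B$ in a single point — and this is settled by the short Hausdorff-convergence argument above.
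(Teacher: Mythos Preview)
Your proposal is correct and is essentially the paper's own proof: define $h(x)=\mu([b(x),x])$ for a Whitney map $\mu$ on $\op{CL}(X)$, obtain continuity from axiom $A_4$, the identity $h^{-1}(0)=B$ from property (ii) of Whitney maps, and injectivity on components from the strict monotonicity property (iii). You are in fact slightly more careful than the paper, which takes for granted that each component meets $B$ in a single point (implicitly reading this into $A_3$), whereas you supply the short Hausdorff-convergence argument ruling out the possibility that an interior or non-base end point of an arc component lies in $B$.
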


\begin{proof}
Fix $X$, and assume that $B$ denotes the base Cantor set of $X$. 
Let $\mu$ be a Whitney map for $X$, that is, $\gm: \op{CL}(X) \to [0, +\infty)$ enjoys the above 
properties. Consider the function $h: X \to [0, +\infty)$, define as 
\[h(x) = \mu([b(x), x]),\] 
where $[b(x), x]$ denotes the unique arc in $X$ which connects $x$ to the base point $b(x) \in B$. 

To see the continuity of $h$ on $X$, let $x_i \to x$ within $X$. 
By axiom $A_4$, $[b(x_i), x_i] \to [b(x), x]$, in the Hausdorff topology. 
Thus, by the continuity of $\gm$ on $\op{CL}(X)$, we conclude that $h(x_i) \to h(x)$. 

By property (ii) of Whitney maps, 
\[h^{-1}(0) = \{ x\in X \mid b(x)=x \} = B.\]

By property (iii) of Whitney maps, $h$ is injective on every connected component of $X$. 
\end{proof}

The existence of the height function in the above proposition is sufficient to prove \refT{T:uniformization-abstract}. 
A reader interested only in that theorem may directly go to the proof of \refT{T:uniformization-abstract} at the end of 
\refS{S:uniformisation}. 
The remaining statements in this section are required for the proof of \refT{T:uniformization}. 
So, from now on we assume that $X$ is a subset of $\D{R}^2$. 

Let $X$ be a hairy Cantor set with base Cantor set $B$. A Jordan curve $\gamma \subset \D{R}^2$ is called 
a \textbf{base curve} for $X$, if $X \cap \gamma=B$, and the bounded connected component of 
$\D{R}^2 \setminus \gamma$ does not intersect $X$. 
In particular, the closure of the unbounded component of $\D{R}^2\setminus \gamma$ contains $X$. 

We aim to prove that any hairy Cantor set admits a base curve. 
To this end we need to introduce some basic notions and prove some technical lemmas. 

By a \textbf{topological disk} in the plane we mean a connected and simply connected open subset of the plane. 
Let $X$ be a hairy Cantor set with base Cantor set $B \subset X$. 
We say that an open set $U \subseteq \D{R}^2$ is \textbf{admissible} for $X$, if the following properties hold: 
\begin{itemize}
\item[(i)] $U$ consists of a finite number of pairwise disjoint topological disks, and $B \subseteq U$;
\item[(ii)] for all $x \in X \cap U$, $[b(x), x] \subseteq U$;
\item[(iii)] for all $x\in \partial U \setminus X$, $x$ is accessible from both of $U\setminus X$ and 
$\D{R}^2 \setminus \ol{U}$; 
\item[(iv)] if $U_1$ and $U_2$ are distinct connected components of $U$, then $\ol{U_1} \cap \ol{U_2} = \emptyset$. 
\end{itemize}

Evidently, any finite union of pairwise disjoint Jordan domains which contains $X$ is an admissible set for $X$.
However, it is not immediately clear that nontrivial admissible sets for $X$ exist. 
Over the next few pages we aim to show that any hairy Cantor set admits a nest of admissible sets shrinking to its base 
Cantor set. This will be employed to build a base curve for $X$. 

\begin{lem}\label{L:disjoint-closures}
Let $X$ be a hairy Cantor set with base Cantor set $B$. Assume that $U$ is an open set which satisfies properties 
(i)-(iii) in the definition of 
admissible sets for $X$, and let $U_i$, for $1 \leq i\leq n$, be the connected components of $U$. 
There are topological disks $V_i \subseteq U_i$, for $1 \leq i \leq n$, such that $\cup_{1 \leq i \leq n} V_i$ is an 
admissible set for $X$. 
\end{lem}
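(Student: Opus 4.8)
The plan is to modify each component $U_i$ separately, shrinking it slightly to $V_i$ so that the closures $\overline{V_i}$ become pairwise disjoint while retaining properties (i)--(iii). Since there are finitely many components, it suffices to arrange that each $\overline{V_i}$ lies in a prescribed open neighbourhood of $U_i \cap X$ (or, more precisely, of the portion of $X$ and of $U_i$ that we must keep), chosen small enough that these neighbourhoods are pairwise disjoint; this is possible because the sets $\overline{U_i}$ are compact and the sets $\{b(x),x] : x \in U_i \cap X\}$ they must contain are, for distinct $i$, at positive distance apart (as $B$ is a Cantor set split among the $U_i$, and each arc component lies entirely in one $U_i$ by property (ii)). So fix $i$ and work inside the single topological disk $U=U_i$ with $K = \bigcup\{[b(x),x] : x \in U\cap X\}$, a compact subset of $U$; I want a topological disk $V$ with $K \subseteq V \subseteq \overline V \subseteq U$ satisfying (i)--(iii).

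First I would invoke a uniformisation: let $\psi : \D{D} \to U$ be a Riemann map from the unit disk. The issue is that $\psi$ need not extend continuously to $\partial\D{D}$, and property (iii) — accessibility of each boundary point of $V$ from both sides — is a constraint on the \emph{prime-end} structure of $V$. The natural move is to take $V = \psi(\{|z| < r\})$ for $r$ close to $1$; then $\partial V = \psi(\{|z|=r\})$ is a Jordan curve (analytic, even), so every point of $\partial V$ is automatically accessible from both $V$ and from $\D{R}^2 \setminus \overline V$, giving (iii) for free (and in fact a stronger conclusion than (iii) requires). Properties (i) — $V$ a topological disk containing $B \cap U$ — and (ii) — $[b(x),x]\subseteq V$ for $x \in X \cap V$ — both follow once $r$ is chosen so large that $\psi(\{|z|<r\}) \supseteq K$: this is possible since $K$ is compact in $U = \psi(\D{D})$, so $\psi^{-1}(K)$ is a compact subset of $\D{D}$, hence contained in some $\{|z|\le \rho\}$ with $\rho < 1$; take any $r \in (\rho,1)$. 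Then $B\cap U \subseteq K \subseteq V$, and for $x \in X\cap V$ we have $b(x)\in B\cap U \subseteq V$ and the arc $[b(x),x] \subseteq [b(b(x)),x]\cdot$ — more carefully, $x \in U$ so by property (ii) of $U$ the whole arc $[b(x),x]$ lies in $U$, but we need it inside $V$; this does \emph{not} follow merely from $x \in V$.

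The main obstacle is exactly this last point: ensuring property (ii) for $V$, i.e. that for every $x \in X\cap V$ the arc $[b(x),x]$ stays inside $V$, not merely inside $U$. An arc could dip outside $\{|z|<r\}$ and re-enter. The fix is to choose $V$ adapted to $X$ rather than an arbitrary round sub-disk: by axiom $A_4$ the map $x \mapsto [b(x),x]$ is continuous into the hyperspace with the Hausdorff topology, so $K' = \bigcup\{[b(x),x] : x \in \overline{U\cap X}\}$ is compact (it is the image of the compact set $\overline{U\cap X}$ under a continuous map into $\op{CL}(\D{R}^2)$, then unioned — one checks this union is closed), and $K' \subseteq U$ since by (ii) each such arc lies in $U$ and $U$ is open containing the compact $K'$. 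Now enlarge: we want $V$ with $K' \subseteq V \subseteq \overline V \subseteq U$ \emph{and} $V$ a topological disk \emph{and} $V$ saturated under the arc-map. Take $r$ large enough that $\psi(\{|z|<r\}) \supseteq K'$; set $V = \psi(\{|z|<r\})$. Then for $x \in X\cap V \subseteq X \cap U$, the arc $[b(x),x] \subseteq K' \subseteq V$, giving (ii); and $\overline V = \psi(\{|z|\le r\}) \subseteq U$ is compact with $\partial V$ a Jordan curve, giving (iii) and (i). Finally, performing this construction in each $U_i$ with $r_i$ chosen (additionally) so large that $\overline{V_i} \supseteq K'_i$ but also — trivially, since $\overline{V_i}\subseteq U_i$ and $\overline{U_1},\dots,\overline{U_n}$ are already pairwise disjoint by hypothesis (the $U_i$ are the components of $U$, and while $\overline{U_i}$ may touch, we can shrink) — the closures $\overline{V_i}$ become pairwise disjoint, yielding property (iv). Hence $\bigcup_i V_i$ is admissible for $X$. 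I would present the compactness of $K'$ and the Jordan-curve argument for (iii) as the two substantive steps, with the rest routine.
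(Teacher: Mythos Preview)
Your approach has a genuine gap at the step where you claim $K' = \bigcup\{[b(x),x] : x \in \overline{U_i\cap X}\} \subseteq U_i$. This fails whenever an arc component of $X$ reaches $\partial U_i$: if $\gamma$ is an arc with $\gamma \cap U_i = [b(\gamma), z) $ for some $z \in \partial U_i$ (which is exactly the situation property~(ii) is designed to allow), then $z \in \overline{U_i \cap X}$ and hence $z \in K'$, but $z \notin U_i$. So $K'$ is compact but not contained in $U_i$, and there is no $r<1$ with $\psi(\{|w|<r\}) \supseteq K'$. If instead you use the smaller set $K = \bigcup\{[b(x),x] : x \in U_i\cap X\}$, then $K \subseteq U_i$ by property~(ii), but $K$ is no longer compact, for the same reason.

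More fundamentally, the strategy of taking $V_i$ to be a compactly contained Jordan sub-disk of $U_i$ cannot work: any such $V_i$ will fail property~(ii), because an arc $\gamma$ that runs out to $\partial U_i$ must exit $V_i$, and there is no control preventing $\gamma$ from re-entering $V_i$ (the round level set $\{|w|=r\}$ has no reason to cross each arc only once). The paper's proof avoids this entirely by arranging $V_i \cap X = U_i \cap X$: it first shows $\overline{X\cap U_i} \cap \overline{X\cap U_j} = \emptyset$ for $i\neq j$, and then removes from each $U_i$ only small Jordan pieces near the boundary overlaps, chosen disjoint from $\overline{X\cap U_i}$. This keeps the $X$-content of each component unchanged (so (ii) is inherited automatically) while separating the closures. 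Your construction needs to be redone along these lines; a Riemann-map sub-disk is too crude here.
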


\begin{proof}
If $n=1$, we let $V_1=U_1$, and there is nothing to prove. Below we assume that $n \geq 2$.
Let $J= \{j\in \D{Z} \mid 1 \leq j \leq n\}$.  
We claim that for distinct $i$ and $j$ in $J$, 
\[(\ol{X \cap U_i}) \cap (\ol{X \cap U_{j}}) = \emptyset.\] 
To prove this, it is enough to show that for every $i\in J$, if $x \in \ol{X \cap U_i}$ then $b(x) \in U_i$. 
Fix an arbitrary $x \in \ol{X \cap U_i}$. 
There is a sequence $x_l$ in $X \cap U_i$, for $l \geq 1$, which converges to $x$. 
By property (ii) of admissible sets, $b(x_l) \in U_i$, and hence $b(x)=\lim_{l \to \infty} b(x_l) \in \ol{U_i}$. 
Since $B \cap \partial U_i=\emptyset$, we must have $b(x) \in U_i$. 


In two steps, we shrink each component $U_i$ of $U$ so that the closures of distinct components become disjoint.

Fix an arbitrary $i\in J$.
If $\partial U_i \cap (\ol{X \cap U_j})= \emptyset$ for all $j\neq i$, we let $W_i=U_i$. 
Otherwise, first note that the sets $\ol{X \cap U_i}$ and 
$\cup_{j \in J \setminus\{i\}}(\ol{X \cap U_j}) \cap \partial U_i$ are compact, and by the above paragraph, are disjoint. 
Let $P_i$ be a finite union of Jordan domains such that $P_i$ contains 
$\cup_{j \in J \setminus\{i\}}(\ol{X \cap U_j}) \cap \partial U_i$, but $\ol{P_i}$ does not meet $\ol{X \cap U_i}$. 
Then, define, 
\[W_i = U_i \setminus \ol{P_i}.\]
Repeating the above process, we obtain topological disks $W_i$ for all $i\in J$. 
Note that since for all $i \in J$, $\ol{P_i} \cap (\ol{X \cap U_i})= \emptyset$, we have $W_i \cap X = U_i\cap X$. 
With these modifications, for distinct values of $i$ and $j$ in $J$, we have 
\[\partial W_i \cap (\ol{X \cap W_j}) = \emptyset.\]
That is because, the intersection must lie on $\partial W_j$, but 
$\partial W_j \cap (\ol{X \cap W_j}) \subseteq P_i$ and $\partial W_i \cap P_i =\emptyset$.

Fix an arbitrary $i\in J$. If $\partial W_i$ does not meet $\partial W_j$ for all $j \neq i$, we let $V_i = W_i$. 
If there is $j\neq i$ such that $\partial W_i \cap \partial W_j \neq \emptyset$, we further modify $W_i$ as follows. 
By the above paragraph, the sets $\partial W_i \cap ( \cup_{j\in J} \partial W_j)$ and 
$\cup_{j\in J} (\ol{X \cap W_j})$ are disjoint. 
Then, let $Q_i$ be a finite union of Jordan domains such that $Q_i$ contains 
$\partial W_i \cap ( \cup_{j\in J} \partial W_j)$, but $\ol{Q_i}$ does not meet $\cup_{j\in J} (\ol{X \cap W_j})$. 
Then, define 
\[V_i = W_i \setminus Q_i.\]
It follows that $V_i$ is connected and simply connected. 
Repeating the above process for all $i$ we obtain topological disks $V_i$ for all $i\in J$. 

By construction, $\ol{V_i} \cap \ol{V_j}=\emptyset$, for distinct $i$ and $j$. 
Moreover, since $\ol{Q_i} \cap (\ol{X\cap W_i})= \emptyset$, we must have 
$V_i \cap X = W_i \cap X= U_i \cap X$. This implies that $\cup_{i \in J} V_i$ contains $B$ and satisfies 
property (ii) of admissibility. 
As $U_i$ satisfies property  (iii), and each of $P_i$ and $Q_i$ is a finite union of 
Jordan domains, $V_i$ also satisfies property (iii). 
\end{proof}

\begin{lem}\label{L:simply_connected_to_admissible}
Let $X$ be a hairy Cantor set with base Cantor set $B$. For any $\gep > 0$, there is an admissible set for 
$X$ which is contained in $\D{D}(B, \gep)$.
\end{lem}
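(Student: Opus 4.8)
The plan is to first produce an open set $U$ satisfying properties (i)--(iii) of admissibility and contained in $\D{D}(B,\eps)$, and then upgrade it to a genuine admissible set by invoking \refL{L:disjoint-closures}. The key point is that \refL{L:disjoint-closures} only shrinks the components inside themselves, so if the original $U$ sits inside $\D{D}(B,\eps)$ then so does the resulting admissible set $\bigcup V_i$; thus it suffices to handle (i)--(iii).

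To build such a $U$, I would first use the fact that $B$ is a Cantor set in $\D{R}^2$: cover $B$ by finitely many pairwise disjoint small Jordan domains $D_1,\dots,D_n$, each of diameter $<\eps/3$, so that $\bigcup_k \ol{D_k} \subset \D{D}(B,\eps)$ and $\partial D_k \cap B = \emptyset$ for all $k$ (a Cantor set is totally disconnected, so such a covering exists, with the $\partial D_k$ avoiding $B$ by a small perturbation). This gives a set satisfying (i) and — if the $D_k$ are chosen to have piecewise-smooth or polygonal boundary — also (iii), since every boundary point of a Jordan domain is accessible from both sides. The difficulty is property (ii): an arc component $[b(x),x]$ with $b(x)\in D_k$ may leave $D_k$. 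To fix this, I would invoke axiom $A_4$ (Hausdorff-continuity of the arc map $x\mapsto[b(x),x]$) together with compactness of $X$: the map $b$ is continuous, so $b^{-1}(D_k\cap B)$ is open in $X$ and these sets cover $X$; moreover by $A_4$ and compactness, for every $\delta>0$ there is a neighbourhood of $B$ in $\D{R}^2$ such that any arc component whose base lies in that neighbourhood has diameter $<\delta$. Concretely, arcs with small base are short: if not, one extracts a sequence $x_i$ with $b(x_i)\to B$ but $\diam[b(x_i),x_i]$ bounded below, passes to a convergent subsequence $x_i\to x$, and by $A_4$ the limiting arc $[b(x),x]$ would have $b(x)\in B$ yet positive diameter contradicting nothing directly — so I instead argue: the function $x\mapsto\diam[b(x),x]$ is continuous on $X$ by $A_4$, hence attains a maximum $M_k$ on the compact set $\{x\in X: b(x)\in \ol{D_k}\cap B\}$, and this maximum depends on the covering. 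Therefore I would choose the covering $\{D_k\}$ in two stages: first pick an auxiliary finer covering controlling arc lengths, then enlarge each piece slightly to absorb the arcs emanating from it while staying within $\D{D}(B,\eps)$ and keeping the pieces disjoint — using that $\sup_{x\in X}\diam[b(x),x]$ can be made uniformly small by $A_4$ once the bases are confined to small sets, so that the enlargement needed is itself $<\eps/3$.

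Assembling this: choose $\delta \in (0,\eps/3)$ and, using $A_4$ and compactness of $X$, a finite cover of $B$ by disjoint Jordan domains $\{G_k\}$ with $\partial G_k\cap B=\emptyset$ and small enough diameters that every arc component with base in $\bigcup_k G_k$ has diameter $<\delta$; then set $U_k$ to be a slightly thickened Jordan domain containing $G_k\cup \bigcup\{[b(x),x]: b(x)\in G_k\cap B\}$, chosen with $\partial U_k$ avoiding $B$ and having nice (polygonal) boundary, still pairwise disjoint, and with $\bigcup_k \ol{U_k}\subset\D{D}(B,\eps)$. Then $U=\bigcup_k U_k$ satisfies (i)--(iii): (i) is immediate, (ii) holds because each arc with base in $G_k$ lies in $U_k$ by construction and every base point lies in some $G_k$, and (iii) holds because $\partial U_k$ is a Jordan curve (polygonal), so each of its points is accessible from $U_k\setminus X$ — shrinking slightly to dodge the finitely many? no, possibly infinitely many arcs; here I would use $A_5$-type accessibility is not needed since $\partial U_k$ can be chosen disjoint from $X$ except we only need points of $\partial U_k\setminus X$ accessible, and a point on a polygonal Jordan curve not in $X$ is accessible from both complementary sides provided a small neighbourhood of it on the curve avoids $X$, which holds since $X$ is closed. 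Finally apply \refL{L:disjoint-closures} to $U$ to obtain topological disks $V_k\subseteq U_k$ with $\bigcup_k V_k$ admissible; since $V_k\subseteq U_k$ we retain $\bigcup_k V_k\subseteq \bigcup_k U_k\subseteq \D{D}(B,\eps)$, completing the proof.

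The main obstacle is property (ii) combined with keeping everything inside $\D{D}(B,\eps)$: one must quantify, via $A_4$ and compactness, that arc components with bases confined to a sufficiently small neighbourhood of $B$ are uniformly short, so that the topological disks needed to contain those arcs can be taken within the prescribed $\eps$-neighbourhood; the rest ((i), (iii), and disjointness of closures) is routine once this uniform-shortness estimate is in hand, with \refL{L:disjoint-closures} doing the final cleanup.
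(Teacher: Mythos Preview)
Your proposal contains a genuine gap: the claim that ``arcs with bases confined to small sets are uniformly short'' is false, and you yourself half-notice this when the compactness argument ``contradicts nothing directly''. Every arc component of $X$ has its base in $B$, so once your cover $\{G_k\}$ contains $B$ it contains \emph{all} bases, and the condition ``base in $\bigcup_k G_k$'' imposes no constraint whatsoever on arc diameters. Concretely, for a straight hairy Cantor set with $\sup_{x\in C} l^X(x) = 1$, there are arc components of diameter arbitrarily close to $1$ whose bases lie in any prescribed $G_k$, no matter how small $G_k$ is. Hence the thickened domain $U_k$ you propose, which must contain $\bigcup\{[b(x),x]:b(x)\in G_k\cap B\}$, would have diameter at least $\sup l^X$ and cannot fit inside $\D{D}(B,\eps)$ for small $\eps$. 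Axiom $A_4$ does give that $[b(x),x]$ is short when $x$ (not $b(x)$) is close to $B$, but that is a different statement and does not rescue the construction.

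The paper's proof avoids this obstacle by not attempting to contain whole arcs at all. It starts from a Jordan-domain cover $U\subset\D{D}(B,\eps)$ of $B$, then uses a height function $h$ (\refP{P:X-admits-height}) to pick $t>0$ with $h^{-1}([0,t))\subset U$, removes the ``tall'' set $X'=\{x\in X: h(x)\ge t\}$ from $U$, fills bounded complementary holes, and keeps only the components meeting $B$. The resulting $W\subset U$ satisfies (ii) because for $x\in X\cap W$ one has either $h(x)<t$, forcing $[b(x),x]\subset h^{-1}([0,t))\subset W$, or $x$ lies in a filled hole and a short connectedness argument handles it. The height function is precisely the missing ingredient: it lets one \emph{truncate} the arcs at a controlled level rather than \emph{contain} them, which is what makes it possible to stay inside an arbitrarily thin neighbourhood of $B$. \refL{L:disjoint-closures} is then applied at the end, as you suggest.
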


\begin{proof}
Fix an arbitrary $\gep>0$. 
Let $U$ be a finite union of Jordan domains such that $B \subset U$, $U \subset \D{D}(B, \gep)$, 
and the closures of the connected components of $U$ are pairwise disjoint. 
Below we modify $U$ so that it becomes an admissible set for $X$. 

By \refP{P:X-admits-height} there is a height function $h:X \to [0, + \infty)$.
Since $U$ is open, $B \subset U$, and $h^{-1}(0)=B$, there must be $t > 0$ such that 
$h^{-1}([0,t)) \subset U$. Let us define 
\[X' = \{ x \in X \mid h(x) \geq t \}.\]
This is a closed set in $\D{R}^2$ with $X' \cap B = \emptyset$. 
Define 
\[V = U \setminus X'.\]
Let $Q$ denote the union of $V$ and the bounded components of $\D{R}^2 \setminus V$. 
Let $W$ denote the components of $Q$ which meet $B$. 
Then, ($V$ and hence) $W$ is contained in $U$, and every component of $W$ is simply connected. 
Also, since $W$ covers the compact set $B$, $W$ has a finite number of connected components. 

Fix an arbitrary $x \in X \cap W$. 
First assume that $x \in X \cap V$. Then $h(x) < t$, and hence 
$[b(x), x] \subset h^{-1} ([0, t)) \subset U \setminus X'= V$. 
In particular, since $[x,b(x)]$ is connected and meets $B$, $[x,b(x)]$ is contained in $W$. 
Now assume that $x \in X \cap (W \setminus V)$. 
Note that $x \notin V$, $b(x) \in V$, and for every $y\in [x,b(x)]\cap V$, $[y,b(x)] \subset V$. 
This implies that there is $z \in [x,b(x)]$ such that $[x,b(x)] \cap V= [z,b(x)] \setminus \{z\}$. 
Now, $[x,z] \cap V= \emptyset$, and since $x \in W$, the connected set $[x,z]$ must be contained in the bounded 
component of $\D{R}^2 \setminus V$. 
Thus, $[x,b(x)] = [x,z] \cup ([z, b(x)]\setminus \{z\}) \subset W$. 

Note that $U \setminus X \subseteq V \subseteq Q \subseteq U$. 
As $X$ has no interior points, we have $\ol{U \setminus X} = \ol{Q}=\ol{U}$. 
On the other hand, $U \setminus Q \subseteq X$, which implies 
$(\D{R}^2 \setminus Q) \setminus X=(\D{R}^2 \setminus U) \setminus X$. 
Combining these we obtain 
\begin{align*}
\partial Q \setminus X &= (\ol{Q} \cap (\ol{\D{R}^2 \setminus Q})) \setminus X \\
&=\ol{Q} \cap ((\D{R}^2 \setminus Q) \setminus X) =\ol{U} \cap ((\D{R}^2\setminus U) \setminus X) 
=\partial U \setminus X.
\end{align*}
The set $\partial U$ is a finite union of Jordan curves, so it is locally connected. It follows that every point in 
$\partial U \setminus X$ 
is accessible from both $U \setminus X$ and $\D{R}^2 \setminus U$. 
By the above equation, $\partial Q \setminus X = \partial U \setminus X$. 
This implies that every point in $\partial Q \setminus X$ enjoys the same property. 
Since $W$ consists of a finite number of the components of $Q$, we conclude that every point 
in $\partial W \setminus X$ enjoys that property. 

So far we have shown that $W$ satisfies properties (i)--(iii) in the definition of admissibility for $X$. 
If necessary, we employ \refL{L:disjoint-closures} to shrink each component of $W$ so that it becomes 
an admissible set for $X$. 
\end{proof}

We need the following general feature of admissible sets.   

\begin{lem}\label{L:properties_of_admissible-2}
Let $U$ be an admissible set for a hairy Cantor set $X$. Then, for any connected component $V$ of $U$, 
$V \setminus X$ is connected.
\end{lem}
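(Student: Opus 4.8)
The plan is to show that $V \setminus X$ is connected by exploiting the local-connectivity of the boundary $\partial V$ (which, being a finite union of Jordan curves up to the modifications in the admissibility definition, is at least well-behaved near points outside $X$) together with the structure of the arcs of $X$ inside $V$. The essential point is that $X \cap V$ is a union of whole components $[b(x),x]$ of $X$, each of which meets $B \subset V$, so removing $X$ from the topological disk $V$ only removes a ``comb-like'' closed set all of whose pieces are anchored at the base Cantor set. Intuitively $V \setminus X$ is connected because any point of $V \setminus X$ can be joined to a point near $\partial V$ by a path avoiding $X$, and the boundary region $\partial V \setminus X$ is accessible and connects everything up.

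First I would reduce to a local statement. Suppose, for contradiction, that $V \setminus X = A_1 \sqcup A_2$ with $A_1, A_2$ nonempty, open, and relatively closed in $V \setminus X$. Since $V$ is a topological disk (hence connected) and $V \setminus X$ is dense in $V$ (as $X$ has empty interior by the non-local-connectedness discussion, so $\interior X = \emptyset$), the sets $\ol{A_1}$ and $\ol{A_2}$ both meet $X \cap V$, and $\ol{A_1} \cup \ol{A_2} = \ol{V}$, and $\ol{A_1} \cap \ol{A_2} \subseteq X$. So $X \cap V$ ``separates'' $V$. The strategy is to derive a contradiction with axiom $A_5$: a separating set of this kind must contain a point that is accessible from $V \setminus X$ from ``both sides'' in a way that forces accessibility from $\D{R}^2 \setminus X$ of a non-endpoint of an arc. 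Concretely, I would pick a point $z \in \ol{A_1} \cap \ol{A_2} \cap X$; then $z$ lies in some component $c = [b(z), z']$ of $X$ (an arc or a point), and is approached by points of $A_1$ and by points of $A_2$.

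The key step is to analyze such a separating point $z$ using the topology of the arc $c$ containing it. If $z$ is an interior point of the arc $c$ (not an end point), axiom $A_5$ says $z$ is not accessible from $\D{R}^2 \setminus X$; but I would argue that if $A_1$ and $A_2$ are genuinely separated by $X$ near $z$, then the arc $c$ locally separates a neighborhood of $z$ into the $A_1$-side and the $A_2$-side, and one can then push a path from $A_1$ toward $z$ along one side to build an accessing curve in $V \setminus X \subseteq \D{R}^2 \setminus X$, contradicting $A_5$. This requires care: I would use that $c$ is a Jordan arc, so locally it does not separate the plane, hence a small disk around $z$ minus $c$ is connected; the only way $A_1$ and $A_2$ can fail to be joined in that small disk minus $X$ is if \emph{other} components of $X$ accumulating on $z$ block the way — but by axiom $A_4$, the arcs $[b(x_i), x_i]$ through nearby points $x_i$ converge in Hausdorff topology to $[b(z), z]$, so near $z$ the set $X$ looks like a convergent family of arcs all landing on the same local piece, and this ``brush'' structure, by the definition of hairy Cantor set, still leaves the complement locally connected away from the base. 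Alternatively, if $z$ is an end point of $c$ that is not in $B$ (a peak), axiom $A_6'$-type density and the brush structure handle it; and $z \in B$ is the genuinely delicate case. For $z \in B$, one uses that $B \subset V$ and that $V$ is open, so a whole neighborhood of $z$ lies in $V$; then separation at $z$ would again contradict the comb structure of $X$ near a base point, where arcs emanate only ``upward.''

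I expect the main obstacle to be the case $z \in B$, i.e. ruling out that the base Cantor set itself, together with the hairs attached to it, disconnects $V \setminus X$. The resolution should go through the ``straightening'' picture: by \refT{T:uniformization} (or rather its constituent ingredients available up to this point — but note \refT{T:uniformization} is proved later, so I should instead argue intrinsically), near any base point the complement of $X$ in a small disk is path-connected because one can slide horizontally underneath the Cantor set of base points (there is always ``room below $B$''), combined with the fact that any two nearby points above the brush can be connected by going down around the shorter hairs. Making this rigorous without circularly invoking \refT{T:uniformization} is the crux; I would localize carefully, use the height function $h$ from \refP{P:X-admits-height} to organize the hairs by length, and exploit axiom $A_5$ together with compactness of $X$ to produce the contradicting accessing curve. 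If a cleaner route exists, it is to show directly that $V \setminus X$ is path-connected by constructing explicit paths, using admissibility property (iii) to route paths along $\partial V \setminus X$ and property (ii) to ensure hairs do not escape $V$.
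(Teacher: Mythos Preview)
Your proposal is not a proof but a sketch with several explicit gaps, and the central mechanism you rely on --- producing an accessing curve to a separation point $z$ and contradicting axiom $A_5$ --- does not work as stated.

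The specific flaw is in the step where you assert that if $z \in \ol{A_1} \cap \ol{A_2}$ is an interior point of an arc $c$, then one can ``push a path from $A_1$ toward $z$ along one side to build an accessing curve in $V \setminus X$.'' Membership of $z$ in $\ol{A_1}$ does not imply accessibility of $z$ from $A_1$; that is the whole distinction between closure and accessibility. Moreover, a single Jordan arc $c$ does \emph{not} locally separate the plane, so if $A_1$ and $A_2$ really are separated near $z$, the separation must be caused by the \emph{other} components of $X$ accumulating on $c$ from both sides (which is exactly what $A_5$ encodes). But then those very accumulating arcs are precisely what block any curve in $A_1$ from landing at $z$, so your contradiction evaporates. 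You also leave the case $z \in B$ explicitly unresolved, and your suggested remedy (``slide horizontally underneath the Cantor set of base points'') presupposes a product structure that you do not have at this stage --- indeed, establishing it is essentially the content of \refT{T:uniformization}, which depends on the present lemma.

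The paper's approach is entirely different and avoids pointwise accessibility arguments. It uses Borsuk's homotopy criterion: $\hat{\D{C}} \setminus Y$ is connected if and only if every continuous map $Y \to \D{S}^1$ is null-homotopic. The set $X \cap V$ is split into $X_1$ (components crossing $\partial V$) and $X_2$ (components contained in $V$); one first collapses $\hat{\D{C}} \setminus V$ to a point and shows $V \setminus X_1$ is a topological disk by exhibiting an explicit null-homotopy, and then collapses $\hat{\D{C}} \setminus (V \setminus X_1)$ to a point and repeats the argument for $X_2$. The homotopies are built using the coordinate system $\Psi(z) = (\Psi(b(z)), h(z))$ coming from the height function of \refP{P:X-admits-height}, which lets one slide points along hairs toward the base or toward the collapsed point in a controlled way. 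This route sidesteps the delicate local topology of $X$ near base points entirely.
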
 
 
We shall use a criterion of Borsuk \cite[Chapter~XI, Theorem~3.7]{ES52} in the proof of the above lemma.
That is, for a closed set $Y$ in the Riemann sphere $\hat{\D{C}}$, the set $\hat{\D{C}} \setminus Y$ is connected 
if and only if every continuous map from $Y$ to the unit circle $\D{S}^1= \{(x,y) \in \D{R}^2 \mid x^2 + y^2=1\}$ 
is null-homotopic. 
Recall that a continuous map $g : Y \to \D{S}^1$ is called null-homotopic, if $g$ is homotopic to a constant map. 
For instance, this criterion may be used to show that the complement of a Cantor set in the plane is connected. 
To see this, let $C$ be a Cantor set in the plane, and $g : C \to \D{S}^1$ be a continuous map.  
As $C$ is compact, $g$ is uniformly continuous. Therefore, we may decompose $C$ into a finite number of disjoint 
Cantor sets $C_i$ such that the image of $g$ on each $C_i$ is contained in an arc of $\D{S}^1$ of length at most $\pi$. 
It follows that on each $C_i$, $g$ is homotopic to a constant map. Combining these homotopies, one obtains a homotopy 
from $g$ to a constant map on all of $C$. 

\begin{proof}
Fix an arbitrary hairy Cantor set $X$. Let $U$ be an admissible set for $X$, and $V$ be a connected component of $U$. 
As $X$ is compact, without loss of generality we may assume that $V$ is bounded in the plane, that is, $\ol{V}$ 
is compact. 
Define $X_1$ as the union of the connected components of $X$ which intersect both $V$ and $\partial V$. 
Every connected component of $X_1$ is a Jordan arc. 
Define $X_2$ as the union of the connected components of $X$ which are contained in $V$.
Note that $V \setminus X= (V \setminus X_1) \setminus X_2$. 
We shall first show that $V \setminus X_1$ is connected, and simply connected, and then show that 
$(V \setminus X_1) \setminus X_2$ is connected. 
Mostly due to perhaps complicated topological structure of $\partial V$ relative $X$ the argument is rather long. 
We shall deal with $V \setminus X_1$ in Steps 1--4 below, and then deal with $(V \setminus X_1) \setminus X_2$ in 
Steps 5--7. The main idea in both cases is to work in a quotient space where the boundary of $V$ is 
identified to a single point.  

Let $B \subset X$ be the base Cantor set of $X$. 
By \refP{P:X-admits-height}, we may choose a height function $h$ on $X$, which by rescaling, if necessary, 
we may assume that $h: X \to [0,1]$. 
Let $\gY: B \to C_0 \subset \D{R}$ be a homeomorphism from $B$ to a Cantor set $C_0$ in $\D{R}$. 
We may extend $\gY$ onto $X$, using the height function $h$, according to 
\[\gY: X \to C_0 \times[0, 1], \quad \gY(z) = (\gY(b(z)), h(z)).\]
By the continuity of the base map $b$ and properties of height functions, $\Psi$ is continuous and injective on $X$. 
As $X$ is compact, $\gY$ is a homeomorphism from $X$ to $\gY(X)$.
This provides us with a rather simple coordinate system on $X$, which we shall use to build homotopies of maps 
on subsets of $X$. 

\medskip

\textit{Step 1: The set $X_1$ is closed in $\D{R}^2$.}

\smallskip

Let $x_i$, for $i \geq 1$, be a sequence in $X_1$ that converges to some $x \in \ol{X_1}$. 
We need to show that $b^{-1}(b(x)) \cap \partial V \neq \emptyset$ and $b^{-1}(b(x)) \cap V \neq \emptyset$. 
Since each component of $X_1$ meets $\partial V$, there is $y_i \in X_1 \cap \partial V$ with $b(y_i)=b(x_i)$, for each 
$i \geq 1$. 
As $\partial V$ is compact, passing to a subsequence if necessary, we may assume that $y_i$ converges to some 
$y \in \partial V$.
By the continuity of $b$, we have 
\[b(x) = \lim_{i \to \infty} b(x_i) = \lim_{i \to \infty} b(y_i) = b(y).\] 
In particular, as $b(x_i) \in V$, $b(x) \in \ol{V}$. However, by properties of admissible sets, $b(x)\in B \subset U$, 
which implies that $b(x) \notin \partial V$. Hence, we must have $b(x) \in V$, and therefore 
$b^{-1}(b(x)) \cap V \neq \emptyset$. 
The above equation also implies that $y \in b^{-1}(b(x)) \cap \partial V$, thus, 
$b^{-1}(b(x)) \cap \partial V \neq \emptyset$. This completes Step 1. 

\medskip

Let 
\[C_1 = \gY(b(X_1)) \subseteq C_0.\]
This is a closed set in $\D{R}$. 
By property (iv) of admissible sets, for any connected component $\gga$ of $X_1$, there is 
$z \in \gamma \cap \partial V$ such that $V \cap \gamma = [b(z), z] \setminus \{z\}$.
It follows that there is a function $u_1: C_0 \to [0,1]$ such that
\[\gY(V \cap X_1) = \{(x, y) \in \D{R}^2 \mid x \in C_1, 0 \leq y < u_1(x) \}.\]

Consider the equivalence relation $\sim_1$ on $\hat{\D{C}}$ defined according to $z \sim_1 z'$ if and only if 
$z = z'$, or both $z$ and $z'$ belong to $\hat{\D{C}} \setminus V$.
The quotient space $\hat{\D{C}}/\mathord\sim_1$ is the one-point compactification of the 
connected and simply connected open set $V$.
Hence, it is a topological sphere, which may be identified with $\hat{\D{C}}$.
Let $\pi_1 : \hat{\D{C}} \to \hat{\D{C}}$ be the quotient map, that is, 
\[\pi_1(z)= [z]_{\mathord \sim_1}.\]
Define
\[v_1 = \pi_1(\hat{\D{C}}\setminus V), 
\quad K_1= \pi_1((\hat{\D{C}}\setminus V) \cup X_1) = \{v_1\} \cup \pi_1(V \cap X_1).\]
Then $v_1$ is a single point in $\hat{\D{C}}$, and $K_1$ is the corresponding one-point compactification of 
$X_1 \cap V$, and is a compact set in $\hat{\D{C}}$. 

Consider the map $\gF_1 : C_1 \times [0, 1]  \to K_1$, 
\[\Phi_1(x,y) = \begin{cases}
\pi_1(\Psi^{-1}(x,y))  	& \tif    y < u_1(x),\\
v_1							        & \tif   y \geq u_1(x).
\end{cases}\]


\medskip

{\em Step 2. The map $\gF_1$ is continuous, and $\gF_1: \gY(V \cap X_1) \to K_1 \setminus \{v_1\}$ is 
a homeomorphism.}

\smallskip

Let $A_1=\gY(X_1)$ and $A_2=C_1 \times [0,1] \setminus \gY(V \cap X_1)$. 
By Step 1, $X_1$ is closed, which implies that $A_1= \gY(X_1)$ is closed in $C_1 \times [0,1]$.  
We claim that $A_2$ is also closed in $C_1 \times [0,1]$. 
To see this, first note that $X_1 \setminus V$ is closed in $X_1$, and $\gY$ is a homeomorphism on $X_1$, 
which imply that $\gY(X_1 \setminus V)$ is closed. 
Since $A_2 \cap \gY(X_1)= \gY(X_1\setminus V)$, this set must be also closed, and hence compact. 
Using the compactness of $\gY(X_1)$ as well, we note that 
\begin{align*}
\ol{A_2} \cap \gY(V \cap X_1) &= \ol{A_2} \cap ( \gY (X_1) \cap \gY(V \cap X_1)) \\
&= (\ol{A_2} \cap \gY(X_1)) \cap \gY(V \cap X_1)\\
&= (A_2 \cap \gY(X_1)) \cap \gY(V \cap X_1) \\
&= A_2  \cap \gY(V \cap X_1)= \emptyset. 
\end{align*}
Hence, $\ol{A_2}= A_2$, and therefore, $A_2$ is closed in $C_1 \times [0,1]$. 

The map $\gF_1$ is continuous on both $A_1$ and $A_2$. That is because, $\pi_1$ is continuous on $\hat{\D{C}}$, 
and $\gY^{-1}$ is continuous on $A_1$. On th	e set $A_2$, $\gF_1$ is a constant map only taking value $v_1$. 
It follows that $\gF_1$ is continuous on $A_1 \cup A_2=C_1 \times [0,1]$. 

As $\gY: X \to \gY(X)$ is a homeomorphism, $\gY^{-1}$ is a homeomorphism on $\gY(V\cap X_1)$, which takes 
values in $V$. The map $\pi_1$ is also a homeomorphism on $V$. Therefore, $\gF_1$ is a homeomorphism from 
$\gY(V \cap X_1)$ onto $\pi_1 (V \cap X_1)=K_1 \setminus \{v_1\}$. 

\medskip

\textit{Step 3. Any continuous map $g: K_1\to \D{S}^1$ is null-homotopic.}

\smallskip

Consider  $H: (C_1 \times [0, 1]) \times [0, 1] \to C_1 \times [0, 1]$, defined as 
\[H((x, y), t) = (x, \max(y, t)).\]
This is a continuous map with $H((x,y),0)= (x,y)$ and $H((x,y), 1)= (x,1)$. 
Let $g: K_1 \to \D{S}^1$ be a continuous map. 
Define the map $G: K_1 \times [0, 1] \to \D{S}^1$ as 
\[G(w, t) = \begin{cases}
g(\Phi_1(H(\Phi_1^{-1}(w), t)))		&\tif	w \neq v_1,\\
g(v_1)				                   					&\tif	w = v_1.
\end{cases}\]
For all $w \in K_1$, $G(w, 0) = g(w)$ and $G(w, 1) = g(v_1)$. We need to show that $G$ is continuous. 

By Step 2, $G$ is continuous on $(K_1\setminus\{v_1\}) \times [0, 1]$. On the set $\{v_1\} \times [0, 1]$, $G$ is 
constant, and hence continuous. 
To prove that $G$ is continuous, it is sufficient to show that for any convergent sequence $(w_i, t_i)$ 
in $(K_1\setminus\{v_1\}) \times [0, 1]$ with $w_i \to v_1$, we have $G(w_i, t_i) \to g(v_1)$.

Let $(x_i,y_i)= \Phi_1^{-1}(w_i)$, for $i \geq 1$. 
Since $w_i \to v_1$, by Step 2, the sequence $(x_i,y_i)$ may not have a limit point in 
$\Phi_1^{-1}(K_1\setminus \{v_1\})$.
Hence, \footnote{Given compact sets $A$ and $B$ in $\D{R}^2$, we define 
$d(A, B) = \min\{d(x, y) \mid x\in A, y \in B\}.$}
\[\lim_{i \to \infty} d((x_i,y_i), \Phi_1^{-1}(v_1)) = 0.\]
As $\gF_1^{-1}(v_1)$ is compact, for every $i \geq 1$, there is $(x_i',y_i')$ in $\Phi_1^{-1}(v_1)$ with 
\[d((x_i,y_i), \Phi_1^{-1}(v_1)) = d((x_i,y_i), (x_i',y_i')).\]
Recall that 
\[\Phi_1^{-1}(v_1) = \{(x, y) \in \D{R}^2 \mid x \in C_1, u_1(x) \leq y \leq 1\}.\]
In particular, $(x_i',y_i') \in \Phi_1^{-1}(v_1)$ implies that $H((x_i',y_i'), t_i) \in \Phi_1^{-1}(v_1)$. 
Therefore, since $H(\cdot, t_i)$ is not expanding distances,
\[d(H((x_i,y_i), t_i), \Phi_1^{-1}(v_1)) \leq d(H((x_i,y_i), t_i), H((x_i',y_i'), t_i))  \leq d((x_i,y_i), (x_i',y_i')).\]
Therefore, 
\[\lim_{i \to \infty} d(H((x_i,y_i), t_i), \Phi_1^{-1}(v_1))= 0.\]
By the continuity of $\gF_1$ and $g$, the above equation implies that $G((x_i,y_i), t_i) \to g(v_1)$. 
This completes Step 3. 

\medskip

\textit{Step 4. The set $V \setminus X_1$ is connected, and simply connected.}

\smallskip

As $\pi_1$ is injective and continuous on $V$, and $V$ is open, $\pi_1: V \to \hat{\D{C}} \setminus \{v_1\}$ 
is a homeomorphism. 
It follows that $V \setminus X_1$ is connected if and only if $\pi_1(V\setminus X_1)= \hat{\D{C}} \setminus K_1$ 
is connected.
By Borsuk's criterion, discussed before the proof, and Step 3, $\hat{\D{C}} \setminus K_1$ is connected. 
Thus, $V \setminus X_1$ is connected.

Let $K = (\hat{\D{C}} \setminus V) \cup X_1$.
If $V \setminus X_1$ is not simply connected, there is a closed curve $\gamma$ in $V \setminus X_1$ which is 
not null-homotopic in $V \setminus X_1$.
Note that $K = \hat{\D{C}} \setminus (V \setminus X_1) \subseteq \hat{\D{C}} \setminus \gamma$.
Since $\gamma$ is not null-homotpic, both connected components of $\hat{\D{C}} \setminus \gamma$ must  
intersect $K$. However, this is not possible since $K$ is connected. This completes Step 4. 

\medskip

Consider the equivalence relation $\mathord \sim_2$ on $\hat{\D{C}}$ where $z \sim_2 z'$ if and only if 
$z = z'$, or both $z$ and $z'$ belong to $(\hat{\D{C}}\setminus V)\cup X_1$.
By Step 4, $V \setminus X_1$ is a topological disk. 
Thus, $\hat{\D{C}}/\mathord \sim_2$ is the one-point compactification of the topological disk $V \setminus X_1$, 
which is a topological sphere. Hence, we may identify $\hat{\D{C}}/\mathord \sim_2$ with $\hat{\D{C}}$.
Let $\pi_2 : \hat{\D{C}} \to \hat{\D{C}}$ be the quotient map,
\[\pi_2(z)= [z]_{\mathord \sim_2}.\]
Define 
\begin{gather*}
v_2 = \pi_2((\hat{\D{C}}\setminus V)\cup X_1), \\
K_2 = \pi_2((\hat{\D{C}} \setminus V) \cup X) = \pi_2( (\hat{\D{C}}\setminus V)\cup X_1 \cup X_2) 
= \{v_2\} \cup \pi_2(X_2).
\end{gather*}
Then, $v_2$ is a single point, while $K_2$ is a compact set. Note that $X_2$ may or may not be compact. 
Indeed, if $X_2$ is compactly contained in $V$, then $\pi_2(X_2)$ is compact, and $v_2$ is away from $\pi_2(X_2)$. 
However, if $X_2$ is not compactly contained in $V$, then $v_2$ is a limit point of $K_2 \setminus \{v_2\}$. 

Define
\[\Phi_2 : \Psi(X) \to K_2 ,\quad \Phi_2(x,y) = \pi_2(\Psi^{-1}(x,y)).\]
This is a continuous map on $\gY(X)$. Moreover, since $\pi_2$ is continuous and injective on $V \setminus X_1$, 
and $X_2 \subset V \setminus X_1$, $\gF_2$ induces a homeomorphism from $\gY(X_2)$ to $K_2 \setminus \{v_2\}$.

\medskip

\textit{Step 5 : Any continuous map $g: K_2 \to \D{S}^1$ is homotopic to a map $g': K_2 \to \D{S}^1$ 
which is constant on each connected component of $K_2$.}

\smallskip

Consider $H : \Psi(X) \times [0, 1] \to \Psi(X)$, defined as 
\[H((x, y), t) = (x, \min(y, 1-t)).\]
This a continuous map, with $H((x,y), 0)= (x,y)$, and $H((x,y),1)= (x,0)$.
Also, for each $t\in [0,1]$, $H(\cdot, t)$ maps $\gY(X_2)$ into $\gY(X_2)$. 

Let $g : K_2 \to \D{S}^1$ be a continuous map.
Consider $G : K_2 \times [0, 1] \to \D{S}^1$ defined as
\[G(w,t) = \begin{cases}
g(\Phi_2(H(\Phi_2^{-1}(w), t)))		&\tif	w \neq v_2,\\
g(v_2)								                       	&\tif	w = v_2.
\end{cases}\]
We have $G(w,0) \equiv g(w)$. On the other hand, since $\gF_2^{-1}$ maps each connected component of $K_2$ to a 
vertical line segment, $G(w,1)$ is constant on each connected component of $K_2$. 
There remains to show that $G$ is continuous. 

Evidently, $G$ is continuous on $(K_2 \setminus \{v_2\}) \times [0, 1]$, and is constant on $\{v_2\} \times [0,1]$. 
If $v_2$ is away from $K_2$, then we are done. Otherwise, assume that a sequence $w_i$ in $K_2 \setminus \{v_2\}$ 
converges to $v_2$, and $t_i$ converges to $t$. Below we show that $G(w_i, t_i) \to g(v_2)$. 

Note that  $\ol{X_2} \subseteq X_1 \cup X_2$. That is because, if $z_i$ is a sequence in $X_2$ converging to some $z$, 
then $b(z_i)\to b(z) \in \ol{V}$. However, $\partial V \cap B=\emptyset$, which implies that $b(z)\in V$. Thus, 
$b(z)\in X_1 \cup X_2$, and hence $z\in X_1 \cup X_2$. 
Now recall that $\pi_2^{-1}(w_i) \in X_2$, for $i \geq 1$, and $\pi_2: X_2 \to K_2\setminus \{v_2\}$ 
is a homeomorphism. 
Since $w_i \to v_2$, the sequence $\pi_2^{-1}(w_i)$ may not have a limit point in $X_2$. 
Therefore, any limit point of this sequence belongs to $X_1$, that is,   
\[\lim_{ i \to \infty} d(\pi_2^{-1}(w_i), X_1) = 0, \]
which implies 
\[\lim_{ i \to \infty} d(\Psi(\pi_2^{-1}(w_i)), \Psi(X_1)) = 0. \]
For $i \geq 1$, let $(x_i,y_i) = \Phi_2^{-1}(w_i) = \Psi(\pi_2^{-1}(w_i))$. 
As $\Psi(X_1)$ is compact, for every $i \geq 1$, there is $(x_i', y_i')$ in  
$\gY(X_1)$ such that 
\[d((x_i, y_i),\Psi(X_1)) = d((x_i, y_i), (x_i', y_i')).\]
Since $H((x_i', y_i'), t_i) \in \Psi(X_1)$, for all $i \geq 1$, and $H(\cdot, t_i)$ is non-expanding,
\[d(H((x_i,y_i), t_i), \Psi(X_1))  \leq d(H((x_i, y_i), t_i), H((x_i', y_i'), t_i)) 
\leq d((x_i,y_i), (x_i', y_i')).\]
Thus, $H((x_i,y_i), t_i)$ converges to $\gY(X_1)$. 
Recall that $\gF_2(\gY(X_1))=v_2$. Using the continuity of $\gF_2$ and $g$, we conclude that $G(w_i, t_i) \to g(v_2)$.
This completes Step 5. 

\medskip

{\em Step 6. The map $g': K_2 \to \D{S}^1$ is null-homotopic.}

\smallskip

Define 
\[K_3 = \{v_2\} \cup \pi_2(B \cap X_2) = \{v_2\} \cup \pi_2(b(X_2)). \]
We have $K_3\subset K_2$, and $K_3$ is closed and totally disconnected. 
The only point in $K_3$ which might be isolated is $v_2$.
Hence $K_3$ is either a Cantor set, or the union of an isolated point and a Cantor set.
By the discussion preceding the proof of this lemma, $g': K_3 \to \D{S}^1$ is null-homotopic.
Hence, there is a constant $\theta \in \D{S}^1$, and a continuous map $F: K_3 \times [0, 1] \to \D{S}^1$ 
such that 
\[F(w,0) \equiv g'(w), \tand F(w,1) \equiv \theta.\]
Now consider the map $G: K_2 \times [0, 1] \to \D{S}^1$ defined as
\[G(w, t) = 
\begin{cases}
F(\pi_2(b(\pi_2^{-1}(w))), t)		&\tif	w \neq v_2,\\
F(v_2, t)		               						&\tif	w = v_2.
\end{cases}\]
For $w\in K_2 \setminus \{v_2\}$, $\pi_2(b(\pi_2^{-1}(w))) \in K_3$, and therefore, $G$ is well-defined.
Since $g'$ is constant on each component of $K_2$, for $w \in K_2 \setminus\{v_2\}$, 
\[G(w,0)= g'(\pi_2(b(\pi_2^{-1}(w))))= g'(w).\]
For $w= v_2$, $G(w,0)= F(v_2,0)= g'(v_2)$. Thus, $G(w,0)\equiv g'(w)$. 
Also, $G(w,1) \equiv \theta$. 
Below we show that $G$ is continuous. 

Let $(w_i, t_i)$, for $i \geq 1$, be a convergent sequence in $K_2 \times [0, 1]$. 
To prove that $G$ is continuous, it is sufficient to consider sequences with $w_i \in (K_2 \setminus \{v_2\})$ which 
converge to $v_2$.
Let $w'_i = \pi_2(b(\pi_2^{-1}(w_i))) \in K_3$. 
We claim that $w_i' \to v_2$. If not, let $w \in K_3 \setminus \{v_2\}$ be a limit point of the sequence $w_i'$.
Then $\pi_2^{-1}(w) \in \pi_2^{-1}(K_3 \setminus \{v_2\}) \subseteq X_2$ is a limit point of the sequence 
$b(\pi_2^{-1}(w_i))$.
Therefore, $\pi_2^{-1}(w_i)$ has a limit point in $b^{-1}(\pi_2^{-1}(w)) \subseteq X_2$.
Hence, $w_i$ has a limit point in $\pi_2(X_2) = K_2 \setminus \{v_2\}$, which is a contradiction. 
Thus, $w_i' \to v_2$, which implies that $F(w_i', t_i)$ tends to $F(v_2, \lim_{i \to \infty} t_i)$, as $i \to +\infty$. 
Therefore, $G$ is continuous.

\medskip

\textit{Step 7 : The set $(V \setminus X_1) \setminus X_2$ is connected.}

\smallskip

The map $\pi_2$ is injective and continuous on $V \setminus X_1$. Hence, $\pi_2$ is a homeomorphism  from 
$V\setminus X_1$ to $\hat{\D{C}} \setminus \{v_2\}$.
This implies that $(V \setminus X_1)\setminus X_2$ is connected if and only if the set 
$\pi_2((V\setminus X_1)\setminus X_2) = \hat{\D{C}} \setminus K_2$ is connected.
By Borsuk's criterion discussed before the proof, and Steps 5--6, the latter set is connected. 
Thus, $(V \setminus X_1) \setminus X_2$ is connected. 
\end{proof}

Let $X$ be a hairy Cantor set, and $U$ be an admissible set for $X$. 
By a marking for $U$ we mean a collection $u$ of points in $\partial U \setminus X$ such that for every connected 
component $U'$ of $U$ there is a unique $u' \in u$ such that $u' \in \partial U' \setminus X$. 
We refer to the pair $(U,u)$ as a \textbf{marked admissible set} for $X$. 

Let $(U,u)$ and $(V, v)$ be marked admissible sets for $X$ with $\ol{V} \subset U$. 
Let $V'$ be a connected component of $V$ which is contained in a component $U'$ of $U$. 
We say that $V'$ is a \textbf{free component} of $V$ in $(U,u)$, if there is a curve 
\[\gamma: (0,1) \to U' \setminus (\ol{V} \cup X)\]
such that 
\[\lim_{t\to 0} \gamma(t) \in \partial U' \cap u,  \qquad  \tand \qquad \lim_{t \to 1} \gamma(t) \in \partial V' \cap v.\] 
Let us say that $(V, v)$ is a \textbf{free admissible set} for $X$ within $(U,u)$, if $V$ is an admissible set for $X$, 
$\ol{V} \subseteq U$, and every component of $V$ is a free component of $V$ in $(U,u)$. 

\begin{lem}\label{L:admissible_to_free}
Let $X$ be a hairy Cantor set, $(U,u)$ be a marked admissible set for $X$, and $V$ be an admissible set for $X$ with 
$\ol{V} \subset U$. There exists a marked admissible set $(W, w)$ for $X$ such that 
$W \subseteq V$ and $(W,w)$ is free in $(U,u)$. 
\end{lem}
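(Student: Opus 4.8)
\emph{Reduction.} The plan is to reduce to a single component of $U$, shrink $V$ to a very small admissible set $W$ hugging the base Cantor set $B$ by means of a height function, and then route the required curves through $U\setminus(\overline W\cup X)$; the main point will be the connectedness of this last set. Since the components of $U$ have pairwise disjoint closures and $\overline V\subset U$, every component of $V$ lies in a unique component of $U$, so it suffices to work in a single component $U'$ of $U$ with its marking point $u'\in\partial U'\setminus X$. First I would produce topological disks $W_1,\dots,W_m$, each contained in a component of $V$ that is contained in $U'$, with pairwise disjoint closures $\overline{W_i}\subset U'$, each $W_i$ meeting $B$, with $\bigcup_iW_i$ admissible for $X$, and with marking points $w_i\in\partial W_i\setminus X$; then I would show each $W_i$ is a free component in $(U,u)$. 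Forming the union over all components of $U$, and using property~(iv) of $U$ across distinct components, then yields the desired marked admissible set $(W,w)$.

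\emph{Construction of $W$.} By \refP{P:X-admits-height} fix a height function, rescaled so that $h\colon X\to[0,1]$, and for small $t>0$ put $X'_t=h^{-1}([t,1])$, which is compact and disjoint from $B$, with $h^{-1}([0,t])$ shrinking to $B$ as $t\downarrow0$. Running the construction in the proof of \refL{L:simply_connected_to_admissible} with $X'=X'_t$, started from a neighbourhood of $B$ of radius $\eps$, produces an admissible set $W$ for $X$ with $W\subseteq\D D(B,\eps)$, each component of $W$ meeting $B$, and — since $X'_t$ is a compact subset of the pairwise disjoint arcs of $X$ and so encloses no region, so that no ``holes'' arise in the construction — with $X\cap W\subseteq h^{-1}([0,t))$; moreover $\partial W$ is built from arcs of Jordan curves disjoint from $X$ together with points of $X'_t$, so $\partial W\setminus X$ is dense in the outer boundary of each component of $W$. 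Taking both $t$ and $\eps$ sufficiently small, one arranges that $\overline{\D D(B,\eps)}\subset U$, $\D D(B,\eps)\subseteq V$, $\eps<\tfrac12\,d(u,B)$ (here $d(u,B)>0$ because the marking points lie in $\partial U$ while $B\subset U$ is open), and that each component of $W$, being connected, contained in $\D D(B,\eps)$, and meeting $B$, lies inside a single component of $V$ contained in $U'$. Relabelling the components of $W$ inside $U'$ as $W_1,\dots,W_m$, pick $w_i\in\partial W_i\setminus X$.

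\emph{Routing the curves.} Put $Z=U'\setminus\bigl(\bigcup_i\overline{W_i}\cup X\bigr)$, an open set, and suppose for the moment that $Z$ is connected, hence arcwise connected. Then $u'$ is accessible from $Z$: a curve in $U'\setminus X$ landing at $u'$, which exists by admissibility property~(iii) of $U$, has its tail outside $\D D(B,\eps)\supseteq\overline W$, hence in $Z$. Each $w_i$ is accessible from $Z$ as well: by property~(iii) of admissibility for $W$ it is accessible along a curve in $\D R^2\setminus\overline W$ whose tail lies near $w_i$, inside the component of $V$ containing $W_i$ (hence in $U'$) and avoiding $X$ and the other $\overline{W_j}$, hence in $Z$. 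Concatenating a curve from $u'$ into $Z$, an arc in $Z$, and a curve from $Z$ to $w_i$, we obtain $\gamma_i\colon(0,1)\to Z$ with $\lim_{s\to0}\gamma_i(s)=u'\in\partial U'\cap u$ and $\lim_{s\to1}\gamma_i(s)=w_i\in\partial W_i\cap w$. Thus every component of $W$ is a free component of $W$ in $(U,u)$, which finishes the proof once $Z$ is known to be connected.

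\emph{The main obstacle: connectedness of $Z$.} This is the heart of the matter, and I would treat it with Borsuk's separation criterion in the manner of the proof of \refL{L:properties_of_admissible-2}. Collapsing $\hat{\D C}\setminus U'$ to a single point by a quotient map $\pi$, the map $\pi|_{U'}$ is a homeomorphism onto $\hat{\D C}\setminus\{v\}$ and $Z$ is carried onto $\hat{\D C}\setminus Y$, where $Y=\{v\}\cup\pi(\overline W)\cup\pi(X\cap U')$ is compact; it suffices to show every continuous $g\colon Y\to\D S^1$ is null-homotopic. Using the coordinate map $z\mapsto(\Psi(b(z)),h(z))$ on $X$, as in \refL{L:properties_of_admissible-2}, one first homotopes $g$ by sliding the height coordinate of the points of $\pi(X\cap U')$ down to $0$ (the analogue of $H((x,y),s)=(x,\min(y,1-s))$ used there) while keeping $v$ and $\pi(\overline W)$ fixed; the containment $X\cap W\subseteq h^{-1}([0,t))$ ensures that this slide carries points of $\pi(X\cap U')$ lying near $\pi(\overline W)$ into $\pi(\overline W)$, which together with the smallness of $\overline W$ is exactly what makes the homotopy continuous along the ``seam'' between $\pi(\overline W)$ and $\pi(X\cap U')$ (compare Steps~3 and~5 of \refL{L:properties_of_admissible-2}). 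After this homotopy, $g$ factors through the space obtained from $Y$ by collapsing each connected component — the components of $\pi(X\cap U')$ (arcs and points), the sets $\pi(\overline{W_i})$ (on each of which $g$ is null-homotopic, their complements in $\hat{\D C}$ being connected), and $v$ — and this quotient is totally disconnected, a Cantor set together with at most finitely many isolated points, so $g$ is null-homotopic by the argument recalled just before the proof of \refL{L:properties_of_admissible-2}. Pulling back the null-homotopy shows that $\hat{\D C}\setminus Y$, and hence $Z$, is connected. The delicate points in this last step — the continuity of the sliding homotopy across the seam, and the triviality of $\D S^1$-valued maps on the closed sets $\overline{W_i}$ — are where I expect the real work to lie.
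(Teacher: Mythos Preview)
Your approach differs from the paper's. The paper never shrinks $V$ and never attempts to prove that $U'\setminus(\overline W\cup X)$ is connected. It uses only that $U'\setminus X$ is path-connected (\refL{L:properties_of_admissible-2}): given a non-accessible component $V'$, a path in $U'\setminus X$ from $u'$ to a point of $\partial V'\setminus X$, truncated at its first contact with $\overline{V'}$, yields $\lambda:(0,1)\to U'\setminus(\overline{V'}\cup X)$. This $\lambda$ may pass through the other components $Q_i$ of $V$, and the paper simply replaces $V$ by $V\setminus\lambda((0,1))$, restores admissibility via \refL{L:disjoint-closures}, and observes that $V'$ and every cut piece of the $Q_i$ become accessible through sub-arcs of $\lambda$. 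The number of non-accessible components strictly drops, so finitely many passes finish. This buys the paper a short argument that rests entirely on the already-proved \refL{L:properties_of_admissible-2}.

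Your route hinges on the stronger connectedness of $Z=U'\setminus(\overline W\cup X)$, and the Borsuk argument you sketch has a genuine gap. The proposed homotopy --- slide $\pi(X\cap U')$ down in height while holding $v$ and $\pi(\overline W)$ fixed --- is not even well-defined on the overlap $\pi(X\cap\overline W)$: for $w=\pi(y)$ with $y\in X\cap\overline{W_i}$, the ``fixed'' rule gives $g(w)$ while the ``slide'' rule gives $g$ at the slid point, a \emph{different} point of $\pi(\overline{W_i})$, and for generic $g$ these disagree. More seriously, continuity at $v$ fails for arcs that cross $\partial U'$: if $y_n\in X\cap U'$ with $y_n\to\partial U'$ then $\pi(y_n)\to v$, yet at $s=1$ your slide sends $y_n$ to $b(y_n)\in W$, which lies in a fixed compact set away from $v$, so $G(\pi(y_n),1)$ is governed by $g|_{\pi(\overline W)}$ and need not approach $g(v)$. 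Repairing this would require the two-stage $(X_1,X_2)$ collapse of \refL{L:properties_of_admissible-2} with $\pi(\overline W)$ carefully woven in --- substantially more than you have outlined. (The auxiliary claim $X\cap W\subseteq h^{-1}([0,t))$ is also unjustified: the filling step in \refL{L:simply_connected_to_admissible} can and does restore points of $X'_t$ to $W$; see the case $x\in X\cap(W\setminus V)$ there.) As written, the connectedness of $Z$ is not established.
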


\begin{proof}
Let $V'$ be a component of $V$, which is contained in a component $U'$ of $U$. 
Let us say that $V'$ is an accessible component of $V$ in $(U,u)$, if there is a curve 
\[\gl_{V'}: (0,1) \to U' \setminus (\ol{V} \cup X)\] 
such that 
\[\lim_{t \to 0} \gl_{V'}(t) \in \partial U' \cap u, \quad \tand \quad \lim_{t \to 1} \gl_{V'}(t) \in \partial V' \setminus X.\] 

If every component $V'$ of $V$ is an accessible component of $V$ in $(U,u)$, we define $W=V$, and $w$ 
be the collection of $\lim_{t \to 1} \gl_{V'}(t)$ over all components $V'$  of $V$. 
If $V$ has non-accessible components within $(U,u)$, we modify the components of $V$ so that all components of 
$V$ become accessible. We explain this process below. 

Let $V'$ be a non-accessible component of $V$. Let $U'$ be the component of $U$ which contains $V'$, 
and let $u'$ be the unique element in $u \cap \partial U'$. 
Let $Q_i$, for $1 \leq i \leq m$, denote the components of $V$ which lie in $U'$.
Rearranging the indexes if necessary, we may assume that $Q_1=V'$. 
Recall from \refL{L:properties_of_admissible-2} that $U' \setminus X$ is connected. As $U' \setminus X$ is also open, 
it must be path-connected. 
Also, recall that $u'$ is accessible from $U' \setminus X$. 
These imply that there is a curve 
\[\gl: (0,1) \to U' \setminus (\ol{V'} \cup X)\] 
such that $\lim_{t \to 0} \gamma(t)= u'$ and $\lim_{t \to 1} \gamma(t)\in \partial V' \setminus X$. 

For $2 \leq i \leq m$, let $Q_i'$ denote the union of the connected components of $Q_i \setminus \gl((0,1))$ 
which intersect $X$.
We define  
\[P = (V \setminus \cup_{2 \leq i\leq m} Q_i) \bigcup (\cup_{2 \leq i \leq m} Q_i')= V \setminus \gl((0,1)).\]
Since each $Q_i$ is a topological disk, the connected components of $Q_i'$ are topological disks. 
As $X \cap \gl([0,1]) = \emptyset$, for each $i$, $Q_i \cap X= Q_i' \cap X$, and $V \cap X = P \cap X$. 
It follows that $P$ satisfies properties (i)--(iii) in the definition of admissible sets. 

At this stage, the set $P$ might not satisfy property (iv) in the definition of admissible sets. 
However, we may employ \refL{L:disjoint-closures} to shrink the components of $P$ so that property (iv) holds as well. 
Let $L$ denote this modified admissible set obtain from employing that lemma. 
Note that the number of components of $L$ is the same as the number of components of $P$. 

If $\gl ((0,1))$ meets some $Q_i$, for $2 \leq i \leq m$, then every component of $Q_i'$ is an accessible component of 
$P$ in $(U, u)$ (via a portion of the curve $\gl((0,1))$).
Also, if some $Q_i$, for $2 \leq i \leq m$, is an accessible component of $V$ within $(U,u)$, and 
$\gl((0,1)) \cap Q_i= \emptyset$, then $Q_i$ is an accessible component of $P$ within $(U,u)$.
Moreover, as $V' \cap \gl((0,1))=\emptyset$, $V'$ is an accessible component of $P$ in $(U,u)$, 
via the curve $\gl$.

By the above paragraph, the number of non-accessible components of $P$ within $(U,u)$ is strictly 
less than the number of non-accessible components of $V$ within $(U,u)$. 
On the other hand, if $L'$ is a component of $L$ which is contained in a component $P'$ of $P$, and $P'$ 
is an accessible component of $P$ within $(U,u)$, then $L'$ is an accessible component of $L$ within $(U,u)$. 
Therefore, the number of non-accessible components of $L$ within $(U,u)$ is strictly 
less than the number of non-accessible components of $V$ within $(U,u)$. 

Recall that $V$ has a finite number of connected components. By repeating the above process, we may reduce the 
number of non-accessible components of $V$ until all its components become accessible.
\end{proof}

\begin{lem}\label{L:construction_for_base_curve}
Let $X$ be a hairy Cantor set with base Cantor set $B$. 
There are marked admissible sets $(U_n, u_n)$, for $n\geq 1$, such that the following hold: 
\begin{itemize}
\item[(i)] $\cap_{n \geq 1} U_n= B$;
\item[(ii)] for $n \geq 1$, $U_{n+1}$ is compactly contained in $U_n$;
\item[(iii)] for $n \geq 1$, $(U_{n+1}, u_{n+1})$ is a free admissible set in $(U_n, u_n)$.
\end{itemize}
\end{lem}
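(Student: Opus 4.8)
The plan is to construct the sequence $(U_n,u_n)$ recursively: at each stage I would use \refL{L:simply_connected_to_admissible} to produce an admissible set of small Euclidean size sitting compactly inside the previous one, and then use \refL{L:admissible_to_free} to replace it by a \emph{free} admissible set. All the genuine topological content has already been isolated in those two lemmas, so the argument amounts to threading them together with a good choice of scales.

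Before starting the recursion I would record the elementary fact that every bounded admissible set $U$ for $X$ admits a marking, i.e.\ that $\partial U'\setminus X\neq\emptyset$ for each component $U'$ of $U$. Indeed, $U'$ is a bounded topological disk, so $\partial U'$ is a non-degenerate continuum: it is closed and connected (the boundary of a bounded simply connected planar domain is connected), and it is neither a single point nor an arc, since in either of those cases $\hat{\D{C}}\setminus\partial U'$ would be connected, contradicting that it is the disjoint union of the two non-empty open sets $U'$ and $\operatorname{int}(\hat{\D{C}}\setminus U')$. By axiom $A_1$ every connected subset of $X$ lies in a single component of $X$, which is a point or an arc; hence $\partial U'\not\subseteq X$, and picking one point of $\partial U'\setminus X$ in each component of $U$ gives a marking.

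With this in hand the recursion runs as follows. For $n=1$, apply \refL{L:simply_connected_to_admissible} with $\gep=1$ to obtain an admissible set $U_1\subseteq\D{D}(B,1)$, which is bounded, and fix a marking $u_1$ for it. Given a marked admissible set $(U_n,u_n)$, choose $\gep_n\in(0,1/(n+1))$ small enough that $\ol{\D{D}(B,\gep_n)}\subseteq U_n$ (possible since $B$ is compact and $U_n$ is open with $B\subseteq U_n$); then \refL{L:simply_connected_to_admissible} provides an admissible set $V$ for $X$ with $V\subseteq\D{D}(B,\gep_n)$, so $\ol V$ is compact and $\ol V\subseteq U_n$. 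Feeding $(U_n,u_n)$ and $V$ into \refL{L:admissible_to_free} yields a marked admissible set $(W,w)$ for $X$ with $W\subseteq V$ that is free in $(U_n,u_n)$, and I would set $(U_{n+1},u_{n+1}):=(W,w)$. Then (iii) holds by construction; since $\ol{U_{n+1}}\subseteq\ol V\subseteq U_n$ with $\ol V$ compact, (ii) holds; and since each $U_n$ is admissible (so $B\subseteq U_n$) while $U_n\subseteq\D{D}(B,\gep_{n-1})$ with $\gep_{n-1}<1/n$ for $n\geq2$, the closedness of $B$ gives $B\subseteq\bigcap_{n\geq1}U_n\subseteq\bigcap_{n\geq2}\D{D}(B,\gep_{n-1})=B$, which is (i).

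The step I expect to be the main — though minor — obstacle is the existence of markings (the continuum argument above); everything else is bookkeeping with the scales $\gep_n$ on top of Lemmas~\ref{L:simply_connected_to_admissible} and~\ref{L:admissible_to_free}.
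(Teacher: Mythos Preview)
Your proposal is correct and follows essentially the same route as the paper: use \refL{L:simply_connected_to_admissible} to produce a small admissible set compactly inside the previous one, then \refL{L:admissible_to_free} to upgrade it to a free marked admissible set, and iterate. The only cosmetic difference is the base case: the paper simply takes $U_1$ to be a Jordan domain containing $X$ (so $\partial U_1\cap X=\emptyset$ trivially and any boundary point serves as a mark), whereas you obtain $U_1$ from \refL{L:simply_connected_to_admissible} and therefore need your continuum argument to guarantee a marking exists; both work fine.
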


\begin{proof}
For $n=1$, let $U_1$ be a Jordan domain containing $X$.  
Let $u_1$ consist of an arbitrary point on $\partial U_1$. 
Evidently, $(U_1, u_1)$ is a marked admissible set for $X$.

Assume that $(U_m, u_m)$ is defined for all $1 \leq m < n$, for some $ n \geq 2$, and satisfy properties (ii) and (iii) 
listed in the lemma. Below, we build $(U_n, u_n)$. 

Let $\gep_n = d(\partial U_{n-1}, B)/2$.
By \refL{L:simply_connected_to_admissible}, there is an admissible set $U_n'$ for $X$ such that 
$U_n' \subset \D{D}(B, \eps_n)$. It follows from the choice of $\gep_n$ that $U_n'$ is compactly contained in 
$U_{n-1}$. 
By the induction hypothesis, $(U_{n-1}, u_{n-1})$ is a marked admissible set for $X$. 
Therefore, by \refL{L:admissible_to_free}, there is a marked admissible set $(U_n, u_n)$ for $X$ such that 
$U_n \subset U'_n$ and $(U_n, u_n)$ is free within $(U_{n-1}, u_{n-1})$. 

Note that for $n\geq 2$, $\gep_n \leq \diam (U_1)/ 2^{n-1}$ and $U_n \subset \D{D}(B, \gep_n)$. 
Therefore, $\cap_{n \geq 1} U_n= B$. 
\end{proof}

\begin{propo}\label{P:every_hcs_has_base}
Every hairy Cantor set admits a base curve.
\end{propo}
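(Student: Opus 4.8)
The plan is to produce a base curve as a uniform limit of Jordan curves $\gamma_n$, each bounding an $X$-free Jordan domain $P_n$, built by an inductive ``finger-growing'' surgery driven by the free marked admissible sets of \refL{L:construction_for_base_curve}.

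\emph{Setup.} Fix marked admissible sets $(U_n,u_n)$, $n\ge1$, with $\bigcap_nU_n=B$, $\ol{U_{n+1}}$ compactly contained in $U_n$, and $(U_{n+1},u_{n+1})$ free in $(U_n,u_n)$. A decreasing sequence of compact connected sets whose intersection is contained in the totally disconnected set $B$ has diameters tending to $0$; applying this along each nested chain of components gives $\delta_n:=\max\{\diam V\mid V\text{ a component of }U_n\}\to0$, and since ``free'' is transitive along the hierarchy of marked points, after passing to a subsequence we may assume $\sum_n\delta_n<\infty$. For each $n$ and each component $V$ of $U_{n+1}$, contained in the component $V^-$ of $U_n$, fix a free curve $\lambda_{n,V}\colon(0,1)\to V^-\setminus(\ol{U_{n+1}}\cup X)$ from the marked point of $V^-$ to the marked point of $V$. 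For fixed $n$ only finitely many such arcs occur, they lie in open sets disjoint from the compact set $X$, and any two of them meet (if at all) only at a shared marked point; a routine general-position perturbation inside $U_n\setminus(\ol{U_{n+1}}\cup X)$ therefore makes the whole family pairwise disjoint except at shared marked points, and by property (iv) of admissibility the arcs based at different components of $U_n$ lie in sets with disjoint closures.

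\emph{The approximating Jordan curves.} Let $P_0$ be a small round disk about $u_1$; since $u_1\in\partial U_1\setminus X$ and $X$ is a compact subset of the open set $U_1$, $\ol{P_0}\cap X=\varnothing$ for $P_0$ small. Inductively, given an $X$-free Jordan domain $P_n$ whose closure reaches a neighbourhood of the marked point of every component of $U_{n+1}$, build $P_{n+1}$ by attaching to $P_n$, along a thin tube about each $\lambda_{n+1,V}$ (here $V$ ranges over the components of $U_{n+2}$), a ``finger'' from $\partial P_n$ near the marked point of $V^-$ to a small neighbourhood of the marked point of $V$. As $\lambda_{n+1,V}$ is a compact arc at positive distance from both $X$ and $\ol{U_{n+2}}$, a thin enough tube is disjoint from $X$; the disjointness from the Setup keeps the fingers mutually disjoint and disjoint from all earlier fingers (those lie in $U_m\setminus\ol{U_{m+1}}$ for $m\le n$, the new ones inside $U_{n+1}$). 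Thus $P_{n+1}$ is again an $X$-free Jordan domain, $P_n\subseteq P_{n+1}$, $\gamma_{n+1}:=\partial P_{n+1}$ is disjoint from $X$, and $\ol{P_{n+1}}$ reaches the marked points of all components of $U_{n+2}$. All stage-$(n+1)$ fingers lie in $U_{n+1}$, a union of finitely many sets of diameter $\le\delta_{n+1}$, so, taking the tube widths $\le\delta_{n+1}$ as well and using a compatible parametrisation by $\D{S}^1$, we get $\|\gamma_{n+1}-\gamma_n\|_\infty\lesssim\delta_{n+1}$, whence $\gamma_n$ converges uniformly to a continuous loop $\gamma\colon\D{S}^1\to\D{R}^2$. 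Writing $P=\bigcup_nP_n$: the surgery gives $P\cap X=\varnothing$; and each $z\in B$ lies in components $V_n$ of $U_n$ with $\diam\ol{V_n}\to0$, while $\ol{P_{n-1}}$ meets a neighbourhood of the marked point of $V_n$, so $d(z,\ol P)=0$ and $B\subseteq\ol P$.

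\emph{Conclusion, and the main obstacle.} Granting that $\gamma$ is a Jordan curve, a standard kernel-convergence argument identifies the bounded complementary component of $\gamma$ with $P$; then $P\cap X=\varnothing$ gives the first requirement, while $B\subseteq\ol P$ and $B\cap P=\varnothing$ give $B\subseteq\partial P=\gamma(\D{S}^1)$, and conversely any $x\in X\setminus B$ lies outside $\ol{U_{m_0}}$ for some $m_0$ (as $\bigcap_m\ol{U_m}=B$), hence has a neighbourhood missing the fixed set $\ol{P_{m_0}}$ and all fingers of stages $>m_0$ (which sit inside $U_{m_0}$, with tube widths $\to0$), so $x\notin\ol P\supseteq\gamma(\D{S}^1)$; thus $\gamma(\D{S}^1)\cap X=B$ and $\gamma$ is a base curve. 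The crux is to prove that $\gamma$ is an embedded circle rather than a mere loop: the danger is that the two banks of some finger, or a finger together with the finitely many finer fingers sprouting from its tip, collapse onto each other in the limit. I expect this to be the main difficulty, and would handle it by carrying through the induction a quantitative lower bound on the separation of the two banks of each finger — supplied by the disjointness of the closures of the components of the admissible sets (property (iv)) together with the accessibility built into the definition of admissibility — so that $\gamma$ stays injective on complementary arcs of $\D{S}^1$ meeting only at preimages of points of $B$; injectivity then upgrades $\gamma(\D{S}^1)=\partial P$ to a genuine Jordan curve.
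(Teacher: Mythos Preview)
Your approach is essentially the paper's: both constructions thicken the tree of free curves joining the marked points of the nested admissible sets into a sequence of Jordan curves and pass to a uniform limit. The paper phrases it as ``thicken the tree $T_m$'' via tubular neighbourhoods $\tilde\eta_{n,i,j}$, you phrase it as growing fingers from an increasing Jordan domain $P_n$, but the geometry is the same.

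The gap you flag --- injectivity of the limit $\gamma$ --- is genuine, and your proposed fix (``carry a quantitative lower bound on the separation of the two banks of each finger'') is too vague to be a proof; the banks of a thin tube are by construction arbitrarily close, so no such uniform lower bound exists. The paper resolves this not by any estimate but by a careful \emph{choice of parametrisation}: it fixes in advance a Cantor set $C\subset\D{S}^1$, built as $\bigcap_n\bigcup_i C_{n,i}$ from nested arcs $C_{n,i}$ whose combinatorics mirror those of the components $U_{n,i}$, and arranges that $\gamma_{n+1}\equiv\gamma_n$ off $\bigcup_i C_{n,i}$ while $\gamma_n(C_{n,i})\subset\D{D}(U_{n,i},\eps_n')$. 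Injectivity of $\gamma$ then falls into three trivial cases: if $s,t\in C$ lie in distinct $C_{n,i}$, $C_{n,j}$, then $\gamma(s)$ and $\gamma(t)$ lie in the disjoint sets $\ol{U_{n,i}}$, $\ol{U_{n,j}}$ (this is where property~(iv) of admissibility is actually used); if $s,t\notin C$, then $\gamma(s)=\gamma_N(s)\ne\gamma_N(t)=\gamma(t)$ for some finite $N$; if exactly one of $s,t$ is in $C$, one image lies in $B$ and the other does not. Your phrase ``compatible parametrisation by $\D{S}^1$'' has to mean precisely this Cantor-set bookkeeping, and once it is in place the injectivity is a routine case check rather than the main difficulty.
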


\begin{proof}
Let $X$ be a hairy Cantor set, and let $B\subset X$ be the base Cantor set of $X$. 
By \refL{L:construction_for_base_curve}, there is a nest of marked admissible sets $(U_n,u_n)$, for $n\geq 1$, 
such that $(U_{n+1}, u_{n+1})$ is free in $(U_n, u_n)$, and $U_n$ shrinks to $B$. 
Recall that $U_1$ is a Jordan domain containing $X$. 
To simplify the forthcoming presentation, let us choose a Jordan domain $U_0$ which contains $\ol{U_1}$, and 
let $u_0 \subset \partial U_0$ contain a single point. 
For each $n\geq 0$, let $U_{n,i}$, for $1\leq i \leq k_n$, denote the connected components of $U_n$. 
We have $k_0=k_1=1$. 
Also, let $u_{n,i}$ denote the unique point in $u_n \cap \partial U_{n,i}$. 
For $n\geq 0$ and $1 \leq i \leq k_n$, define $J_{n,i}$ as the set of integers $j$ with $1 \leq j \leq k_{n+1}$ and 
$U_{n+1,j} \subset U_{n,i}$. 

For each $n\geq 0$, $(U_{n+1}, u_{n+1})$ is free in $(U_n, u_n)$. Therefore, there are  
\[\gh_{n,i,j}: (0,1) \to U_{n,i} \setminus (\cup_{j\in J_{n,i}} \ol{U_{n+1,j}} \cup X),\]
such that
\[\lim_{t\to 0} \gh_{n,i,j}(t)=u_{n,i}, \qquad \lim_{t \to 1} \gh_{n,i,j}(t)=u_{n+1,j},\]
for all $1 \leq i \leq k_n$ and $j \in J_{n,i}$. 
We may extend $\gh_{n,i,j}$ onto $[0,1]$ by setting $\gh_{n,i,j}(0)=u_{n,i}$ and $\gh_{n,i,j}(1)=u_{n+1,j}$. 
If necessary, we may modify these curves such that $\gh_{n,i,j}((0,1)) \cap \gh_{n,i,j'}((0,1))= \emptyset$, 
for distinct values of $j$ and $j'$. 

For $m \geq 0$, let $T_m$ denote the union of the curves $\gh_{n,i,j}([0,1])$, for all $0 \leq n \leq m$, 
$1 \leq i\leq k_n$, and $j\in J_{n,i}$. 
Each $T_m$ forms a finite tree embedded in the plane. 
Let 
\[T = \cup_{m \geq 0} T_m \cup B.\] 
Since $\cap_{m\geq 1} U_m = B$, $T$ is a compact set in the plane. Moreover, $T \cap X= B$. 

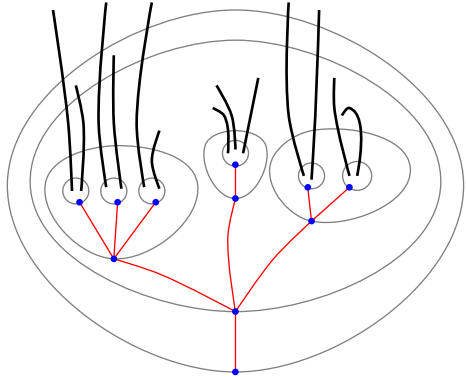
\begin{figure}[ht]
\begin{pspicture}(6,5)
\psccurve[linewidth=.5pt,linecolor=gray](3,0)(6,2.5)(3,4.8)(0,2.5)(3,0) 
\psccurve[linewidth=.5pt,linecolor=gray](3,.8)(5.7,2.5)(3,4.4)(.3,2.5)(3,.8) 
\psccurve[linewidth=.5pt,linecolor=gray](1.4,1.5)(2.5,2.5)(1.4,3)(.5,2.5)(1.4,1.5) 
\psccurve[linewidth=.5pt,linecolor=gray](3,2.3)(3.4,3)(3,3.2)(2.6,3)(3,2.3) 
\psccurve[linewidth=.5pt,linecolor=gray](4,2)(5.3,2.7)(4,3.2)(3.45,2.7)(4,2) 

\pscurve[linewidth=.5pt,linecolor=red](3,0)(3,.7)(3,.8) 
\pscurve[linewidth=.5pt,linecolor=red](3,.8)(2,1.3)(1.4,1.5) 
\pscurve[linewidth=.5pt,linecolor=red](3,.8)(2.9,1.8)(3,2.3) 
\pscurve[linewidth=.5pt,linecolor=red](3,.8)(3.5,1.5)(4,2) 

\psline[linewidth=.5pt,linecolor=red](1.4,1.5)(.95,2.25) 
\psline[linewidth=.5pt,linecolor=red](1.4,1.5)(1.45,2.25) 
\psline[linewidth=.5pt,linecolor=red](1.4,1.5)(1.95,2.25) 

\psline[linewidth=.5pt,linecolor=red](3,2.3)(3,2.75) 

\psline[linewidth=.5pt,linecolor=red](4,2)(3.95,2.45) 
\psline[linewidth=.5pt,linecolor=red](4,2)(4.5,2.45) 

\pscircle[linewidth=.5pt,linecolor=gray](.9,2.4){.18} 
\pscircle[linewidth=.5pt,linecolor=gray](1.4,2.4){.18} 
\pscircle[linewidth=.5pt,linecolor=gray](1.9,2.4){.18} 

\pscircle[linewidth=.5pt,linecolor=gray](3,2.9){.18} 

\pscircle[linewidth=.5pt,linecolor=gray](4,2.6){.18} 
\pscircle[linewidth=.5pt,linecolor=gray](4.6,2.6){.2} 

\psdots[dotsize=2.5pt,linecolor=blue](3,0)(3,.8)(1.4,1.5)(3,2.3)(4,2)(.95,2.25)(1.45,2.25)(1.95,2.25)(3,2.75)(3.95,2.45)(4.5,2.45)

\pscurve[linewidth=1pt](.85,2.4)(.8,3.3)(.6,4.8) 
\pscurve[linewidth=1pt](.97,2.4)(1,3.3)(.9,3.8) 

\pscurve[linewidth=1pt](1.3,2.45)(1.2,3.3)(1.3,4.9) 
\pscurve[linewidth=1pt](1.5,2.43)(1.4,3.3)(1.4,4.2) 

\pscurve[linewidth=1pt](1.8,2.45)(1.7,3.3)(1.9,4.9) 
\pscurve[linewidth=1pt](2,2.43)(1.9,2.8)(2,3.2) 

\pscurve[linewidth=1pt](2.9,2.9)(2.85,3.4)(2.7,3.5) 
\pscurve[linewidth=1pt](3,2.95)(2.95,3.4)(2.75,3.8) 
\pscurve[linewidth=1pt](3.1,2.9)(3.2,3.4)(3.3,3.9) 

\pscurve[linewidth=1pt](3.9,2.6)(3.7,3.4)(3.7,4.9) 
\pscurve[linewidth=1pt](4,2.55)(4.05,3.4)(4.1,4.8) 

\pscurve[linewidth=1pt](4.5,2.6)(4.3,3.5)(4.3,3.9) 
\pscurve[linewidth=1pt](4.6,2.6)(4.5,3.5)(4.4,3.4) 
\end{pspicture}
\caption{An admissible nest, the mark points, and the three $T_2$.}
\label{F:Nest}
\end{figure}

We aim to turn $T$ into a base curve $\gamma$ for $X$. 
The curve $\gamma$ will be the limit of a sequence of curves $\gamma_n$, where $\gamma_n$ is obtained from
slightly ``thickening'' the tree $T_n$. We present the details of this construction 
below. See Figure \ref{F:Tree-to-base-curve}. 


Note that the curves $\gh_{n,i,j}((0,1))$ are pairwise disjoint, for distinct values of the triple $(n,i,j)$. 
Hence, there are tubular neighbourhoods of these curves, which are pairwise disjoint, and do not meet $X$. 
In other words, there are orientation preserving, continuous and injective maps 
\[\tilde{\gh}_{n,i,j}:(0,1) \times [-0.1,0.1] \to U_{n,i} \setminus (\cup_{j \in J_{n,i}} \ol{U_{n+1,j}} \cup X),\]
such that for all $(t,y) \in (0,1) \times [-0.1,0.1]$, we have 
\begin{equation*}
\begin{gathered}
\tilde{\gh}_{n,i,j}(t,0)=\gh_{n,i,j}(t), \\ 
\lim_{t \to 0} \tilde{\gh}_{n,i,j}(t,y)=\gh_{n,i,j}(0)= u_{n,i}, \\
\lim_{t \to 1} \tilde{\gh}_{n,i,j}(t,y)=\gh_{n,i,j}(1)=u_{n+1,j},
\end{gathered}
\end{equation*}
and for distinct triples $(n,i,j)$ and $(n',i',j')$ we have 
\[\tilde{\gh}_{n,i,j}((0,1) \times [-0.1,0.1]) \bigcap \tilde{\gh}_{n',i',j'}((0,1) \times [-0.1,0.1]) =\emptyset.\]

We may extend $\tilde{\gh}_{n,i,j}$ onto $[0,1] \times [-0.1,0.1]$ by setting $\tilde{\gh}_{n,i,j}(0,y)=u_{n,i}$ and 
$\tilde{\gh}_{n,i,j}(1,y)=u_{n+1,j}$, for all $y\in [-0.1,0.1]$.
Consider  
\[M= \bigcup_{n\geq 0} \bigcup_{1 \leq i \leq k_n} \bigcup_{j\in J_{n,i}} \tilde{\gh}_{n,i,j}([0,1] \times [-0.1,0.1]) 
\bigcup B.\] 
This is a compact set in $\D{R}^2$, with $M \cap X = B$. 

Fix $n\geq 1$ and $1\leq i \leq k_n$. There are $2(|J_{n,i}|+1)$ curves on $\partial M$ which land at $u_{n,i}$. 
These are the curves $\tilde{\gh}_{n,i,j} ([0,1]\times \{-0.1\})$ and $\tilde{\gh}_{n,i,j} ([0,1]\times \{+0.1\})$, 
for $j \in J_{n,i}$, as well as $\tilde{\gh}_{n-1,i',i} ([0,1]\times \{-0.1\})$ and 
$\tilde{\gh}_{n-1,i',i} ([0,1]\times \{+0.1\})$, for some $1\leq i' \leq k_{n-1}$ with $i \in J_{n-1,i'}$. 
Let $j_1$, $j_2$, \dots, $j_m$ denote the elements of $J_{n,i}$, labelled in such a way that the curves 
$\tilde{\gh}_{n,i,j_l}([0,1]\times \{-0.1\})$, for $1\leq l\leq m$, followed by 
$\tilde{\gh}_{n-1,i',i} ([0,1]\times \{-0.1\})$ and then  
$\tilde{\gh}_{n-1,i',i} ([0,1]\times \{+0.1\})$ land at $u_{n,i}$ in a clockwise fashion. 

\begin{figure}[ht]
\psset{xunit=2,yunit=2}
\begin{pspicture}(.9,0)(4.6,3)

￼
￼\pscustom[linewidth=1pt,fillstyle=solid,fillcolor=lightgray,linecolor=lightgray]{%
\pscurve[liftpen=1](3,0)(3.15,.4)(3,.8) 
\pscurve[liftpen=2](3,0)(2.85,.4)(3,.8) 
}
\pscurve[linewidth=1pt,linecolor=red](3,0)(3,.7)(3,.8) 

\psecurve[linewidth=1pt,linecolor=black](3,0)(3,0)(3.15,.4)(3.035,.74)(3,.8) 
\psecurve[linewidth=1pt,linecolor=black](3,0)(3,0)(2.85,.4)(2.965,.74)(3,.8)

￼\pscustom[linewidth=1pt,fillstyle=solid,fillcolor=lightgray,linecolor=lightgray]{%
\pscurve[liftpen=1](3,.8)(2.28,1.35)(1.4,1.5)
\pscurve[liftpen=2](3,.8)(2.12,1.05)(1.4,1.5)
}
\psline[linecolor=lightgray](3,.8)(1.4,1.5)
\pscurve[linewidth=1pt,linecolor=red](3,.8)(2,1.3)(1.4,1.5) 

\psecurve[linewidth=1pt,linecolor=black](3,.8)(2.95,.85)(2.28,1.35)(1.46,1.5)(1.4,1.5)
\psecurve[linewidth=1pt,linecolor=black](3,.8)(2.93,.81)(2.12,1.05)(1.45,1.46)(1.4,1.5)

￼\pscustom[linewidth=1pt,fillstyle=solid,fillcolor=lightgray,linecolor=lightgray]{%
\pscurve[liftpen=1](3,.8)(2.8,1.7)(3,2.3)
\pscurve[liftpen=1](3,.8)(3.1,1.7)(3,2.3)
}
\psline[linecolor=lightgray](3,.8)(3,2.3)
\pscurve[linewidth=1pt,linecolor=red](3,.8)(2.95,1.8)(3,2.3) 

\psecurve[linewidth=1pt,linecolor=black](3,.8)(2.975,.865)(2.8,1.7)(2.975,2.25)(3,2.3)
\psecurve[linewidth=1pt,linecolor=black](3,.8)(3.01,.86)(3.1,1.7)(3.01,2.25)(3,2.3)

￼\pscustom[linewidth=1pt,fillstyle=solid,fillcolor=lightgray,linecolor=lightgray]{%
\pscurve[liftpen=1](3,.8)(3.4,1.55)(4,2)
\pscurve[liftpen=2](3,.8)(3.6,1.4)(4,2)
}
\psline[linecolor=lightgray](3,.8)(4,2)
\pscurve[linewidth=1pt,linecolor=red](3,.8)(3.5,1.5)(4,2) 

\psecurve[linewidth=1pt,linecolor=black](3,.8)(3.02,.86)(3.4,1.55)(3.95,1.975)(4,2)
\psecurve[linewidth=1pt,linecolor=black](3,.8)(3.055,.845)(3.6,1.4)(3.97,1.95)(4,2)

￼\pscustom[linewidth=1pt,fillstyle=solid,fillcolor=lightgray,linecolor=lightgray]{%
\pscurve[liftpen=1](1.4,1.5)(1.23,1.9)(.95,2.25)
\pscurve[liftpen=2](1.4,1.5)(1.15,1.8)(.95,2.25)
}
\psline[linecolor=lightgray](1.4,1.5)(.95,2.25)
\psline[linewidth=1pt,linecolor=red](1.4,1.5)(.95,2.25) 

\psecurve[linewidth=1pt,linecolor=black](1.4,1.5)(1.36,1.54)(1.15,1.8)(.97,2.19)(.95,2.25)
\psecurve[linewidth=1pt,linecolor=black](1.4,1.5)(1.379,1.555)(1.23,1.9)(.995,2.205)(.95,2.25)

￼\pscustom[linewidth=1pt,fillstyle=solid,fillcolor=lightgray,linecolor=lightgray]{%
\pscurve[liftpen=1](1.4,1.5)(1.37,1.9)(1.45,2.25)
\pscurve[liftpen=2](1.4,1.5)(1.48,1.9)(1.45,2.25)
}
\psline[linecolor=lightgray](1.4,1.5)(1.45,2.25)
\psline[linewidth=1pt,linecolor=red](1.4,1.5)(1.45,2.25) 

\psecurve[linewidth=1pt,linecolor=black](1.4,1.5)(1.39,1.55)(1.37,1.9)(1.43,2.19)(1.45,2.25)
\psecurve[linewidth=1pt,linecolor=black](1.4,1.5)(1.42,1.56)(1.48,1.9)(1.46,2.19)(1.45,2.25)

￼\pscustom[linewidth=1pt,fillstyle=solid,fillcolor=lightgray,linecolor=lightgray]{%
\pscurve[liftpen=1](1.4,1.5)(1.76,1.9)(1.95,2.25)
\pscurve[liftpen=2](1.4,1.5)(1.65,1.95)(1.95,2.25)
}
\psline[linecolor=lightgray](1.4,1.5)(1.95,2.25)
\psline[linewidth=1pt,linecolor=red](1.4,1.5)(1.95,2.25) 

\psecurve[linewidth=1pt,linecolor=black](1.4,1.5)(1.42,1.55)(1.65,1.95)(1.903,2.21)(1.95,2.25)
\psecurve[linewidth=1pt,linecolor=black](1.4,1.5)(1.445,1.54)(1.76,1.9)(1.925,2.195)(1.95,2.25)

￼\pscustom[linewidth=1pt,fillstyle=solid,fillcolor=lightgray,linecolor=lightgray]{%
\pscurve[liftpen=1](3,2.3)(3.06,2.55)(3,2.75)
\pscurve[liftpen=2](3,2.3)(2.94,2.55)(3,2.75)
}
\psline[linecolor=lightgray](3,2.3)(3,2.75)
\psline[linewidth=1pt,linecolor=red](3,2.3)(3,2.75) 

\psecurve[linewidth=1pt,linecolor=black](3,2.3)(2.98,2.35)(2.937,2.55)(2.975,2.69)(3,2.75)
\psecurve[linewidth=1pt,linecolor=black](3,2.3)(3.02,2.35)(3.057,2.55)(3.025,2.69)(3,2.75)

￼\pscustom[linewidth=1pt,fillstyle=solid,fillcolor=lightgray,linecolor=lightgray]{%
\pscurve[liftpen=1](4,2)(3.92,2.23)(3.95,2.45)
\pscurve[liftpen=2](4,2)(4.03,2.23)(3.95,2.45)
}
\psline[linecolor=lightgray](4,2)(3.95,2.45)
\psline[linewidth=1pt,linecolor=red](4,2)(3.95,2.45) 

\psecurve[linewidth=1pt,linecolor=black](4,2)(3.975,2.05)(3.92,2.23)(3.935,2.39)(3.95,2.45)
\psecurve[linewidth=1pt,linecolor=black](4,2)(4.015,2.06)(4.03,2.23)(3.977,2.39)(3.95,2.45)

￼\pscustom[linewidth=1pt,fillstyle=solid,fillcolor=lightgray,linecolor=lightgray]{%
\pscurve[liftpen=1](4,2)(4.2,2.25)(4.5,2.45)
\pscurve[liftpen=2](4,2)(4.3,2.19)(4.5,2.45)
}
\psline[linecolor=lightgray](4,2)(4.5,2.45)
\psline[linewidth=1pt,linecolor=red](4,2)(4.5,2.45) 

\psecurve[linewidth=1pt,linecolor=black](4,2)(4.035,2.05)(4.22,2.27)(4.44,2.422)(4.5,2.45)
\psecurve[linewidth=1pt,linecolor=black](4,2)(4.06,2.028)(4.31,2.2)(4.465,2.39)(4.5,2.45)

\psarc[linewidth=1pt,linecolor=black](3,.8){.16}{300}{40}
\psarc[linewidth=1pt,linecolor=black](3,.8){.16}{65}{80}
\psarc[linewidth=1pt,linecolor=black](3,.8){.16}{110}{135}
\psarc[linewidth=1pt,linecolor=black](3,.8){.16}{170}{240}

\psarc[linewidth=1pt,linecolor=black](4,2){.13}{240}{30}
\psarc[linewidth=1pt,linecolor=black](4,2){.13}{55}{75}
\psarc[linewidth=1pt,linecolor=black](4,2){.13}{115}{210}

\psarc[linewidth=1pt,linecolor=black](1.4,1.5){.14}{360}{45}
\psarc[linewidth=1pt,linecolor=black](1.4,1.5){.14}{65}{75}
\psarc[linewidth=1pt,linecolor=black](1.4,1.5){.14}{100}{110}
\psarc[linewidth=1pt,linecolor=black](1.4,1.5){.14}{135}{325}

\psarc[linewidth=1pt,linecolor=black](3,2.3){.13}{285}{68}
\psarc[linewidth=1pt,linecolor=black](3,2.3){.13}{110}{245}

\psarc[linewidth=1pt,linecolor=black](.95,2.25){.14}{287}{315}

\psarc[linewidth=1pt,linecolor=black](1.45,2.25){.14}{252}{280}

\psarc[linewidth=1pt,linecolor=black](1.95,2.25){.14}{220}{245}

\psarc[linewidth=1pt,linecolor=black](3,2.75){.15}{245}{294}

\psarc[linewidth=1pt,linecolor=black](3.95,2.45){.14}{255}{295}

\psarc[linewidth=1pt,linecolor=black](4.5,2.45){.15}{210}{240}
\end{pspicture}
\caption{Turning the tree $T_2$ (red curves) to the Jordan curve $\gamma_2$ (black loop) through a thickening process.}
\label{F:Tree-to-base-curve}
\end{figure}
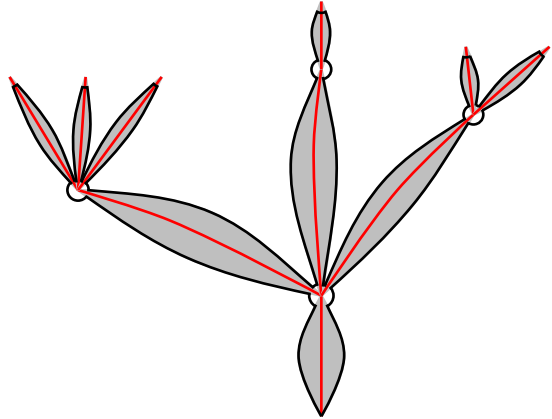

Let $\gep_n \in (0,1/4)$ be small enough, and consider pairwise disjoint curves 
\[\tau_{n,i,j_l}: (0,1) \to \D{R}^2 \setminus M,\] 
for $1\leq l \leq m-1$, such that 
\[\lim_{t\to 0} \tau_{n,i,j_l}(t)= \tilde{\gh}_{n,i,j_l}(\gep_{n}, -0.1), \quad 
 \lim_{t\to 1} \tau_{n,i,j_l}(t)= \tilde{\gh}_{n,i,j_l+1}(\gep_{n}, 0.1).\]
Similarly, let $\rho_{n,i,1}:(0,1) \to \D{R}^2 \setminus M$ and $\rho_{n,i,2}:(0,1) \to \D{R}^2 \setminus M$ 
be such that 
\begin{gather*}
\lim_{t \to 0} \rho_{n,i,1}(t)= \tilde{\gh}_{n-1,i',i}(1-\gep_{n}, 0.1), \quad 
 \lim_{t\to 1} \rho_{n,i,l}(t)= \tilde{\gh}_{n,i,j_1}(\gep_{n}, 0.1), \\
\lim_{t \to 0} \rho_{n,i,2}(t)= \tilde{\gh}_{n,i,j_m}(\gep_{n}, -0.1), \quad 
\lim_{t\to 1} \rho_{n,i,2}(t)= \tilde{\gh}_{n-1,i',i}(1-\gep_{n}, -0.1).  
 \end{gather*} 

Let $\gep_n'$ be a decreasing sequence of positive numbers tending to $0$ such that 
$\D{D}(u_{n,i}, \gep_n') \cap X = \emptyset$, for all $n$ and $i$.  
We may choose the above maps so that the union of the curves $\tilde{\gh}_{n,i,j}(\{\gep_n\} \times [-0.1, +0.1])$, 
for $j \in J_{n,i}$, $\tilde{\gh}_{n-1, i',i}(\{1-\gep_{n-1}\} \times [-0.1,+0.1])$, $\tau_{n,i,j_l}((0,1))$, 
for $1\leq l \leq m-1$, $\rho_{n,i,1}(0,1)$, and $\rho_{n,i,2}(0,1)$ forms a closed loop which encloses the mark point 
$u_{n,i}$ and is contained in $\D{D}(u_{n,i}, \gep_n')$. 

Recall that $\D{S}^1 \subset \D{R}^2$ denotes the circle of radius $1$ about $(0,0)$. 
We may choose closed and connected sets $C_{n, i} \subsetneq \D{S}^1$, for $n \geq 1$ and $1 \leq i \leq k_n$, 
satisfying the following properties: 
\begin{itemize}
\item $C_{n, i}$ is contained in the interior of $C_{m,j}$ if and only if $U_{n, i} \subsetneq U_{m,j}$;
\item if $i \neq j$, then $C_{n,i} \cap C_{n, j}=\emptyset$;  
\item the Euclidean length of $C_{n, i}$ is bounded by  $2^{-n}$.
\end{itemize}
For $n\geq 1$, let $C_n = \cup_{i=1}^{k_n} C_{n,i}$ and define 
\[C= \cap _{n \geq 1} C_n.\]
It follows that $C$ is a Cantor set of points on $\D{S}^1$. 
We aim to build a Jordan curve $\gamma: \D{S}^1 \to \D{R}^2$ such that $\gamma(\D{S}^1) \cap X=B$, and 
$\gamma(t) \in B$ if and only if $t\in C$. 
The map $\gamma$ will be defined as the limit of a sequence of maps $\gamma_n: \D{S}^1 \to \D{R}^2$, for $n\geq 1$.
We define $\gamma_n$ inductively as follows. 

For $n=1$, we define $\gamma_1: \D{S}^1 \to \D{R}^2$ such that 
\begin{gather*}
\gamma_1(C_{1,1})= \tilde{\gh}_{0,1,1}(\{1-\gep_1\} \times [-0.1, +0.1]), \\
\gamma_1(\D{S}^1 \setminus C_{1,1})= \tilde{\gh}_{0,1,1}([0,1-\gep_1) \times \{-0.1\}) 
\cup \tilde{\gh}_{0,1,1}([0,1-\gep_1) \times \{+0.1\}).
\end{gather*}
The choice of the parameterisation of this curve is not important; any choice will work. 

Now assume that the curve $\gamma_n$ is defined for some $n \geq 1$. We define $\gamma_{n+1}$ as follows. 
On $\D{S}^1 \setminus C_n$ we let $\gamma_{n+1} \equiv \gamma_n$. 
On each $C_{n,i}$, we define $\gamma_{n+1}$ in such a way that for all $j \in J_{n,i}$
\[\gamma_{n+1}(C_{n+1,j})= \tilde{\gh}_{n,i,j}(\{1-\gep_n\} \times [-0.1,+0.1]),\]
and $\gamma_{n+1}(C_{n,i} \setminus \cup_{j\in J_{n,i}} C_{n+1,j})$ becomes 
\begin{align*} 
&\bigcup_{j\in J_{n,i}} \tilde{\gh}_{n,i,j}((\gep_n,1-\gep_n) \times \{-0.1\})
\bigcup_{j\in J_{n,i}} \tilde{\gh}_{n,i,j}((\gep_n,1-\gep_n) \times \{+0.1\}]) \\ 
& \quad  \bigcup_{1 \leq l \leq m-1} \tau_{n,i,j_l}([0,1])  \bigcup \rho_{n,i,1}([0,1]) \bigcup \rho_{n,i,2}([0,1]).
\end{align*}
Again, the specific parametrisation of the curve $\gamma_{n+1}$ is not important.
This completes the definition of $\gamma_n$ for all $n\geq 1$. 
Note that for all $n\geq 1$ and $1 \leq i \leq k_n$ we have 
\begin{equation}\label{E:P:image-gamma_n}
\gamma_n (C_{n,i}) \subset \D{D}(U_{n,i}, \gep_n').
\end{equation}

Now we show that $\gamma_n: \D{S}^1 \to \D{R}^2$, for $n\geq 1$, forms a Cauchy sequence. 
Fix $l > 0$, and assume that $m$ and $n$ are bigger than $l$. 
If $x \notin C_l$, we have $\gamma_n(x) = \gamma_m(x)$. 
If $x \in C_{l, i}$, for some $1 \leq i \leq k_l$, by \refE{E:P:image-gamma_n}, $\gamma_l(x) \in \D{D}(U_{l,i}, \gep_l')$. 
It follows from the construction that $\gamma_n(x)$ and $\gamma_m(x)$ also belong to $\D{D}(U_{l,i}, \gep_l')$. 
Therefore, for all $x\in \D{S}^1$, we have 
\[|\gamma_n(x) - \gamma_m(x)| \leq \diam(U_{l, i})+ 2 \gep_l'.\]
However, as $\cap_{n \geq 1} \cup_{i=1}^{k_n} U_{n, i} = B$, $\diam(U_{l, i})$ tends to zero as $l$ tends to infinity.
Hence, the sequence of maps $\gamma_n$ converges uniformly on $\D{S}^1$ to a map 
$\gamma : \D{S}^1 \to \D{R}^2$.
In particular, $\gamma$ is continuous. 

If $x \notin C$, there are $n$ and $i$ such that $x \notin C_{n,i}$.
This implies that for all $m\geq n$, $\gamma_m(x)= \gamma_n(x) \notin B$. 
Therefore, $\gamma(x) \notin X$.
If $x\in C$, there are an arbitrarily large $n$ and $1\leq i \leq k_n$ with $x \in C_{n,i}$. 
By \refE{E:P:image-gamma_n}, $\gamma_n (x) \in \D{D}(U_{n,i},\gep_n')$.
Since $\cap _{n\geq 1} \cup_{1 \leq i \leq k_n} \D{D}(U_{n,i}, \gep_n')= B$, we conclude that 
$\gamma(x) = \lim_{n \to \infty} \gamma_n(x) \in B$. 
On the other hand, for every $z\in B$ there is $x \in C$ with $\gamma(x)=z$. 
To see this, choose $1 \leq i_n \leq k_n$ such that $\cap_{n \geq 1} U_{n,i_n}= z$. There is $x_{n} \in C_{n,i}$ 
with $\gamma_n(x_n) \in \D{D}(U_{n,i_n}, \gep_n')$. If $x_{n_j}$ is a convergent subsequence of $x_n$, converging to
some $x\in \D{S}^1$, we obtain $\gamma(x)= \lim_{j \to \infty} \gamma_{n_j}(x_{n_j})=z$. 
It follows from these statements that $\gamma(\D{S}^1)\cap X= B$. 

To see that $\gamma$ is injective, let $x$ and $y$ be distinct elements of $\D{S}^1$. 
If $x$ and $y$ belong to $C$, then there must be $n\geq 1$ and $i\neq j$ such that $x\in C_{n,i}$ and $y\in C_{n,j}$. 
By \refE{E:P:image-gamma_n} and the decreasing property of $\gep_n'$, for all 
$m\geq n$, $\gamma_m (x) \in \D{D}( U_{n,i}, \gep_m')$ and $\gamma_m(y) \in \D{D}(U_{n,j}, \gep_m')$. 
Since $\ol{U_{n,i}} \cap \ol{U_{n,j}}=\emptyset$, we must have $\gamma(x) \neq \gamma(y)$. 
If $x$ and $y$ do not belong to $C$, then there is $n\geq 1$ such that $x \notin C_n$ and $y \notin C_n$. 
Then, $\gamma(x) = \gamma_n (x) \neq \gamma_n(y) = \gamma(y)$, by the definition of $\gamma$, and the
injectivity of $\gamma_n$. 
If exactly one of $x$ and $y$ belongs to $C$, say $x\in C$ and $y \notin C$, by the above paragraph, 
$\gamma(x)\in B$ and $\gamma (y) \notin B$. Thus, $\gamma(x)\neq \gamma(y)$. 

There remains to show that $X$ does not meet the bounded connected component of 
$\D{R}^2 \setminus \gamma(\D{S}^1)$. 
To see this, note that $\gamma_n(\D{S}^1)$ may be continuously deformed into the tree $T_{n-1}$ in the complement of 
$X$. 
This implies that $X$ does not meet the bounded component of 
$\D{R}^2 \setminus \gamma_n(\D{S}^1)$. Since $\gamma_n$ converges to $\gamma$ uniformly on $\D{S}^1$, 
one infers that $X$ does not meet the bounded component of $\D{R}^2 \setminus \gamma(\D{S}^1)$. 
\end{proof}

\begin{propo}\label{P:plane_minus_based_hcs_has_two_components}
Let $X$ be a hairy Cantor set, and $\go$ be a base curve for $X$. 
Then, the set $\D{R}^2 \setminus (X \cup \go)$ has two connected components, where the bounded component is simply 
connected, and the unbounded component is doubly connected.
\end{propo}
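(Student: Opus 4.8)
The plan is to analyze the complement $\D{R}^2 \setminus (X \cup \go)$ by separately controlling two features: the number of components, and the connectivity type of each. Write $\go=\gamma(\D{S}^1)$ for a base curve, let $\gO_b$ and $\gO_u$ be the bounded and unbounded components of $\D{R}^2\setminus\go$ (these exist and are unique by the Jordan curve theorem, with $\gO_b$ a topological disk and $\gO_u$ doubly connected). By the definition of a base curve, $X\cap\go=B$ and $\gO_b\cap X=\emptyset$, so $\gO_b\subseteq \D{R}^2\setminus(X\cup\go)$ entirely; since $X$ is compact, the unbounded part of $\D{R}^2\setminus(X\cup\go)$ lies in $\gO_u$. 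Thus it suffices to show (a) $\gO_b$ is one whole component of $\D{R}^2\setminus(X\cup\go)$ — equivalently, no point of $\gO_b$ is connected to a point of $\gO_u\setminus X$ inside the complement, which is automatic because $\go$ separates them and $\go\not\subseteq\D{R}^2\setminus(X\cup\go)$; (b) $\gO_u\setminus X$ is connected; and (c) compute the connectivity (fundamental group / number of complementary components in $\hat{\D{C}}$) of each piece.

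For step (b), the natural tool is exactly the setup already used in Lemma~\ref{L:properties_of_admissible-2}: collapse $\D{R}^2\setminus\gO_u = \ol{\gO_b}=\gO_b\cup\go$ to a point. Concretely, work in $\hat{\D{C}}$, let $K_0 = \hat{\D{C}}\setminus\gO_u$ (a closed disk, since $\gO_u$ is the unbounded component), and form the quotient $\hat{\D{C}}/(K_0 \sim \text{pt})$, which is again a sphere with $\gO_u$ embedded as the complement of one point $v$. Under this quotient map $\pi$, the image of $X$ becomes $K=\{v\}\cup\pi(X\cap\gO_u)$, and $X\cap\gO_u = X\setminus B$ is the union of arc components minus their basepoints (plus the limit structure). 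I would then set up the coordinate homeomorphism $\gY: X\to \gY(X)\subseteq C_0\times[0,1]$ from a height function (Proposition~\ref{P:X-admits-height}), exactly as in the proof of Lemma~\ref{L:properties_of_admissible-2}, write $\pi(X\cap\gO_u)$ as the homeomorphic image of $\{(x,y): x\in C_0, 0\le y<u(x)\}$ for a suitable function $u$, and run the same Borsuk-criterion argument: any continuous $g:K\to\D{S}^1$ is null-homotopic because we can push the $y$-coordinate up toward the collapsed point $v$ via $H((x,y),t)=(x,\max(y,t))$, and on the totally disconnected "base" fiber $\pi(b(X\cap\gO_u))$ (a Cantor set or Cantor set plus isolated point) every map to $\D{S}^1$ is null-homotopic by the decomposition-into-small-pieces argument recalled before that lemma. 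Borsuk then gives $\hat{\D{C}}\setminus K$ connected, hence $\gO_u\setminus X$ connected. Essentially this is Lemma~\ref{L:properties_of_admissible-2} applied with $V=\gO_u$ and $X_1=\emptyset$, $X_2 = X\cap\gO_u$ — though one must be slightly careful because $\gO_u$ is not simply connected, so the collapse produces a sphere only after one observes $K_0$ is connected (a closed topological disk); I would spell that out.

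For step (c): $\gO_b$ is simply connected by the Jordan curve theorem, and it is a component of the complement, so the bounded component of $\D{R}^2\setminus(X\cup\go)$ is simply connected. For the unbounded component $\gO_u\setminus X$: it is doubly connected iff its complement in $\hat{\D{C}}$ has exactly two components. Its complement in $\hat{\D{C}}$ is $K_0\cup X = \ol{\gO_b}\cup X$. Now $\ol{\gO_b}$ is connected, and each arc (or point) component of $X$ meets $B\subset\go\subset\ol{\gO_b}$ by axioms $A_1$–$A_3$; hence $\ol{\gO_b}\cup X$ is connected. And $\hat{\D{C}}\setminus(\ol{\gO_b}\cup X)=\gO_u\setminus X$ is connected by step (b), nonempty, and does not contain $\infty$'s whole neighborhood situation trivially — it is an open connected set whose complement $\ol{\gO_b}\cup X$ is a connected compact set that is \emph{not} a single point and does \emph{not} separate the sphere. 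So $\hat{\D{C}}$ minus this set is connected, i.e.\ exactly one component; but we must produce the \emph{two} Euclidean components and the doubly-connectedness. The clean way: $\D{R}^2\setminus(X\cup\go) = \gO_b \,\sqcup\, (\gO_u\setminus X)$, two components. The first is simply connected (a disk). The second, $\gO_u\setminus X$, contains a neighborhood of $\infty$ and has connected complement $\ol{\gO_b}\cup X$ in $\hat{\D{C}}$ which is not a point; a standard fact (e.g.\ via the Riemann-sphere statement that a domain in $\hat{\D{C}}$ with connected complement is simply connected, combined with the fact that removing the point $\infty$ from a simply connected domain of the sphere yields a doubly connected planar domain, provided the complement in the sphere is not just $\{\infty\}$) shows $\gO_u\setminus X$ is doubly connected. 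I would phrase this as: $\hat{\D{C}}\setminus(\ol{\gO_b}\cup X)$ is a simply connected subdomain of $\hat{\D{C}}$ containing $\infty$, and $\ol{\gO_b}\cup X\neq\{\infty\}$, so its intersection with $\D{R}^2$ is a doubly connected planar domain.

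The main obstacle I expect is step (b) — showing $\gO_u\setminus X$ is connected — because one cannot quite cite Lemma~\ref{L:properties_of_admissible-2} verbatim: that lemma is stated for a component $V$ of an admissible set $U$, which is simply connected, whereas $\gO_u$ is doubly connected, and moreover $X$ here is \emph{all} of $X$ (so $X\cap\gO_u = X\setminus B$, and every such component accumulates on $B\subset\partial\gO_u$ rather than sitting compactly inside). The fix is to run the quotient/Borsuk argument directly with $\gO_u$ in place of $V$: the collapse of the closed disk $\ol{\gO_b}$ to a point still yields a sphere, the role of "$X_1$" (arcs meeting the boundary) is played by the entire limit set of $X\setminus B$ on $B$, and since all of $X\setminus B$ maps into the deleted chart of the sphere the $y$-pushing homotopy and the Cantor-set null-homotopy argument go through unchanged. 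I would present this as a self-contained repetition of Steps~5–7 of the proof of Lemma~\ref{L:properties_of_admissible-2}, noting the only new point is the preliminary observation that $\hat{\D{C}}\setminus\gO_u$ is a \emph{connected} set (a closed topological disk), so the quotient is a sphere.
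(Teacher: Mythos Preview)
Your overall strategy---Borsuk's criterion plus a height-function homotopy---is exactly the paper's, but the paper's execution is cleaner and your Step~(b) has two confusions worth flagging.

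The paper avoids the quotient entirely: it applies Borsuk directly to the compact set $X_\go \cup \ol{U'}$ (your $X\cup\ol{\gO_b}$). Given $g:X_\go\cup\ol{U'}\to\D{S}^1$, it first homotopes $g$ on $X$ by pushing heights \emph{down} via $H_1((x,y),t)=(x,\min(y,1-t))$, obtaining $g'$ with $g'(z)=g(b(z))$ on $X$ and $g'=g$ on $\ol{U'}$. Then, since $\ol{U'}$ is a closed disk, $g'|_{\ol{U'}}$ is already null-homotopic, and this null-homotopy extends to all of $X_\go\cup\ol{U'}$ via the base map $b$. No collapse and no Cantor-set step at the end.

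In your version the push direction is reversed. Since $v=\pi(\ol{\gO_b})$ absorbs $B$, approaching $v$ in $\gY$-coordinates means $y\to 0^+$; you want $(x,\min(y,1-t))$, not $(x,\max(y,t))$---your map at $t=1$ would land at $(x,1)$, which need not lie in $\gY(X)$ at all. Relatedly, once you push correctly toward $v$ everything lands at the single point $v$, so the ``totally disconnected base fiber $\pi(b(X\cap\gO_u))$'' step is spurious here: $b(X\cap\gO_u)\subset B\subset\ol{\gO_b}$ is already collapsed to $v$. (That Cantor-set step belongs to the $X_2$-case of Lemma~\ref{L:properties_of_admissible-2}, where the bases lie in the interior of $V$, not on the collapsed boundary.) With these corrections your argument goes through.

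Your Step~(c) is correct and matches the paper's one-line conclusion: $X_\go\cup U'$ is connected, hence $U\cup\{\infty\}$ is simply connected in $\hat{\D{C}}$, hence $U$ is doubly connected.
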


\begin{proof}
Let $U'$ denote the bounded component of $\D{R}^2 \setminus \go$, and $X_\go= X \cup \go$. 
Since $\go$ is a base curve for $X$, $X$ does not meet $U'$. Therefore, $U'$ is a bounded component of 
$\D{R}^2 \setminus X_\go$. Evidently, $U'$ is simply connected. 
Let $U= \D{R}^2 \setminus (X_\go \cup U')$. 
We aim to show that $U$ is connected and doubly connected. 
To this end, we use Borsuk's criterion (see the paragraph after \refL{L:properties_of_admissible-2}), and show that 
$U\cup \{\infty\}$ is connected and simply connected in the Riemann sphere $\hat{\D{C}}$. 

By Borsuk's criterion, in order to prove that $U\cup \{\infty\}$ is connected, it is sufficient to show that any continuous map 
$g : (X_\go \cup U') \to \D{S}^1$ is null-homotopic. 
Below we construct a homotopy between a given such map $g$ and a constant map.

By \refP{P:X-admits-height}, there is a height function $h$ on $X$, which after rescaling we may assume $h: X \to [0, 1]$.
Let $\gy : B \to \psi(B)$ be a homeomorphism from $B$ onto a Cantor set in the line $\{(x,y) \in \D{R}^2 \mid y=0\}$. 
We extend the map $\gy$ to $\gy: X \to \gy(B) \times [0,1] \subset \D{R}^2$, according to 
\[\gy(z) = (\gy(b(z)), h(z)).\]
As $\gy$ is continuous and injective on a compact set, it must be a homeomorphism onto its image. 

Consider the family of map $H_1 :(\gy(B) \times [0, 1]) \times [0, 1] \to \gy(B) \times [0, 1]$, defined as 
\[H_1((x,y), t) = (x, \min(y, 1 - t)).\]
Define the family of maps $G_1: (X_\go \cup U') \times [0, 1] \to \D{S}^1$, defined as 
\[G_1(z, t) = \begin{cases}
g(\gy^{-1}(H_1(\gy(z), t)))		&   \tif	z \in X,\\
g(z)							                    	&   \tif	z \notin X.
\end{cases}\]
One may see that $G_1$ is continuous. Moreover, $G_1(z, 0) = g(z)$, for all $z \in X_\go \cup U'$.
Hence, $G_1$ is a homotopy between $g$ and the map 
\[g': (X_\go \cup U') \to \D{S}^1, \quad g'(z) = G_1(z, 1).\]
Note that $g'(z) = g(z)$, for all $z \in \ol{U'}$, and $g'(z) = g(b(z))$, for all $z \in X$.

Now we construct a homotopy between $g'$ and a constant map.
The set $\ol{U'} = U' \cup \go$ is homeomorphic to the closed unit disk $\ol{\D{D}(0,1)}$. 
Hence, the map $g'|_{\ol{U'}} = g|_{\ol{U'}}$ is null-homotopic.
Therefore, there is a continuous map $H_2:\ol{U'} \times [0, 1] \to \D{S}^1$ and a constant $\theta \in \D{S}^1$ such 
that $H_2(z, 0) \equiv g(z)$ and $H_2(z, 1) \equiv \theta$.
Now define $G_2: (X_\go \cup U') \times [0, 1] \to \D{S}^1$ as $G_2(z, t) = H_2(b(z)), t)$.
Evidently, $G_2$ is continuous, and for all $z \in (X_\go \cup U')$ we have $G_2(z, 0) = g(b(z)) = g'(z)$ and 
$G_2(z, 1) = \theta$.
Hence, $g$ is null-homotopic and therefore, $U$ is connected. 

The complement of $U\cup \{\infty\}$ in $\hat{\D{C}}$, which is $X_\go \cup U'$, is connected. 
This implies that $U\cup \{\infty\}$ must be simply connected. 
\end{proof}


\section{Uniformisation of hairy Cantor sets}\label{S:uniformisation}
In this section we prove \refT{T:uniformization}. 
Let us fix a hairy Cantor set $X$, with base Cantor set $B \subset X$, and a base curve 
$\go$. 
By \refP{P:plane_minus_based_hcs_has_two_components}, $X\cup \go$ divides the plane $\D{C}$ into two 
connected components. Let $U'$ denote the bounded connected component of $\D{C} \setminus (X \cup \go)$. 
We have $\partial U'=\go$.
We fix the notations
\begin{equation}\label{E:S:fixed-notations}
X_\go= X \cup \go , \quad X_\go^* = X_\go \cup U',  \quad U= \D{C} \setminus X_\go^*, \quad
U^*= U \cup \{\infty\} \subset \hat{\D{C}}.
\end{equation}
Then, $U^*$ is an open, connected, and simply connected subset of $\hat{\D{C}}$. 

We aim to complete $X$ into a one dimensional (topological) foliation of the plane so that each component of $X$ is 
contained in a leaf of that foliation. 
To that end, we identify a single chart (``a box neighbourhood'') containing $X$ so that within that chart $X$ may be 
completed into a foliation which is homeomorphic to the foliation of the unit square by vertical lines. 
The required portions of leaves will come from hyperbolic geodesics in $U^*$ and $U'$.
This analysis will be carried out in the framework of Carath\'eodory's theory of prime ends, 
which relates the geometry of a plane domain to the topology of its boundary. 
For the general theory of prime ends one may see the classic references \cite{Caratheodory1913} and \cite{Koebe1915}, 
or the modern treatment \cite{Pom75}.
However, below we briefly recap the basic notions and statements of the theory which are employed here. 

We use the notations $\ds$ and $\diams$ for the distance and the diameter, respectively, with respect to the 
spherical metric on $\hat{\D{C}}$.

Let $V \subset \hat{\D{C}}$ be an open, connected, and simply connected set whose boundary contains at least two points. 
A \textbf{fundamental chain} in $V$ is a nest of open sets $(\gO_i)_{i\geq 1}$ in $V$ satisfying the following 
properties: 
\begin{itemize}
\item for $i \geq 1$, $\partial \gO_i \cap V$ is a simple curve in $V$, which is homeomorphic to $(0,1)$ and its 
closure contains two distinct points in $\partial V$; 
\item $\ol{(\partial \gO_i \cap V)} \cap \ol{(\partial \gO_j \cap V)}=\emptyset$, for $1\leq i < j$; 
\item for $i \geq 1$, $\gO_{i+1} \subset \gO_i$;
\item $\diams(\partial \gO_i \cap V) \to 0$, as $i\to +\infty$. 
\end{itemize}
Note that by the Jordan curve theorem, $\partial \gO_i \cap V$ divides $V$ into two connected components. 
Equivalently, some authors define the notion of fundamental chains using sequences of Jordan arcs in $V$ with end 
points in $\partial V$. 

Two fundamental chains $(\gO_i)_{i\geq 1}$ and $(\gO'_i)_{i \geq1}$ in $V$ are called \textbf{equivalent} if
every $\gO_i$ contains some $\gO'_j$ and every $\gO'_j$ contains some $\gO_i$. 
Any two fundamental chains in $V$ are either equivalent or eventually disjoint, that is, $\gO_i \cap \gO'_j=\emptyset$ 
for large enough $i$ and $j$. 
An equivalence class of fundamental chains in $V$ is called a \textbf{prime end} of $V$.

The \textbf{impression} of a prime end of $V$, represented by a fundamental chain $(\gO_i)_{i\geq 1}$, is defined as 
$\cap_{i\geq 1} \ol{\gO_i}$. This is a non-empty compact and connected subset of $\partial V$. 
Evidently, the impression of a prime end is independent of the choice of the fundamental chain $(\gO_i)_{i\geq 1}$. 
Any point on $\partial V$ is contained in the impression of a prime end of $V$. 

We say that $x \in \partial V$ is the \textbf{principal point} of a fundamental chain $(\gO_i)_{i\geq 1}$ in $V$, 
if $\diams (\{x\} \cup (\partial \gO_i \cap V))\to 0$. 
In general, equivalent fundamental chains might have different principal points. 
The set of \textbf{principal points of a prime end} is defined as the set consisting of all principal points of the 
fundamental chains in the class of that prime end. 
Every prime end of $V$ has at least one principal point.
By the theory of prime ends, $p \in \partial V$ is accessible from $V$, if and only if, there is a prime end of $V$ 
whose impression contains $p$, and $p$ is the only point in the set of principal points of that prime end. 
Abusing the terminology, we say a prime end is \textbf{accessible} if it has a unique principal point.

\begin{lem}\label{L:boundary-U}
We have $X_{\go} = \partial U^*$.
\end{lem}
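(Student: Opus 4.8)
The plan is to read off the identity from the decomposition of the sphere into the three pieces $U^*$, $X_\go$ and $U'$, using only two elementary facts: that $U^*$ and $U'$ are open in $\hat{\D{C}}$, and that $X$ has empty interior in $\D{R}^2$. The empty-interior property is immediate from axiom $A_1$, since a connected open subset of $X$ would have to lie inside a single component of $X$, and neither a point nor a Jordan arc has interior in the plane.

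First I would do the bookkeeping with the notation of \refE{E:S:fixed-notations}. Because $\go$ is a base curve, the bounded component of $\D{C}\setminus\go$ is disjoint from $X$; comparing this with \refP{P:plane_minus_based_hcs_has_two_components} shows that $U'$ is exactly that bounded component, so if $D$ denotes the unbounded component of $\D{C}\setminus\go$ then $D=\D{C}\setminus(\go\cup U')$ and hence $U=D\setminus X$. This gives the disjoint decomposition $\hat{\D{C}}=X_\go\sqcup U'\sqcup U^*$ in which $X_\go$ is compact and $U'$ is open, while $U^*$ is open as well: since $X_\go\cup U'$ is bounded, $U$ contains $\{|z|>R\}$ for $R$ large, so $\{|z|>R\}\cup\{\infty\}$ is a neighbourhood of $\infty$ inside $U^*$.

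Next I would prove the two inclusions. For $\partial U^*\subseteq X_\go$: since $U^*$ is open, $\partial U^*=\ol{U^*}\setminus U^*\subseteq\hat{\D{C}}\setminus U^*=X_\go\cup U'$, and $U'$, being open and disjoint from $U^*$, cannot meet $\ol{U^*}$, so $\partial U^*\subseteq X_\go$. For $X_\go\subseteq\partial U^*$ it is enough, as $X_\go\cap U^*=\emptyset$, to show $X_\go\subseteq\ol{U}$. Fix $z\in X_\go$ and $\eps>0$. If $z\in\go=\partial D$, then $\D{D}(z,\eps)\cap D$ is a non-empty open set which cannot be contained in $X$ by the empty-interior property, so it meets $D\setminus X=U$. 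If $z\in X\setminus\go$, then $z$ lies neither in $U'$ (which misses $X$) nor in $\go$, hence $z\in D$; for small $\eps$ we have $\D{D}(z,\eps)\subseteq D$, and again $\D{D}(z,\eps)$ is not contained in $X$, so it meets $U$. In both cases every neighbourhood of $z$ meets $U\subseteq U^*$, so $z\in\ol{U^*}\setminus U^*=\partial U^*$.

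I do not anticipate a genuine obstacle; the only two points needing a moment of attention are the openness of $U^*$ at the point $\infty$ and the empty-interior property of $X$, and both are routine.
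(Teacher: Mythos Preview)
Your proof is correct and follows essentially the same approach as the paper: both arguments use the disjoint decomposition $\hat{\D{C}}=U^*\sqcup X_\go\sqcup U'$ together with the openness of $U^*$ and $U'$ for the inclusion $\partial U^*\subseteq X_\go$, and the fact that $X$ (equivalently $X_\go$) has empty interior for the reverse inclusion. The only cosmetic difference is that the paper handles the reverse inclusion in one stroke via $\D{D}(z,r)\setminus\ol{U'}$, whereas you split into the cases $z\in\go$ and $z\in X\setminus\go$; both are fine.
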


\begin{proof}
Let $z\in \partial U^*$. Since $U^*$ and $U'$ are open in $\hat{\D{C}}$, and $U^* \cap U'=\emptyset$, $z$ may not be in 
$U^* \cup U'$. As $\hat{\D{C}} = U^* \cup U' \cup X_\go$, $z\in X_\go$. 
Therefore, $\partial U^* \subseteq X_\go$. 

Let $z \in X_\go$ and $r>0$. Since $U'$ is a Jordan domain, $\D{D}(z, r) \setminus \ol{U'}$ is a non-empty open set. 
As $X_\go$ has empty interior, $\D{D}(z, r) \setminus \ol{U'}$ may not be contained in $X_\go$, and hence, 
must meet $U$.
This implies that $z\in \partial U$. Thus, $X_\go \subseteq \partial U$. 
\end{proof}

Recall the base map $b: X \to B$ and the peak map $p: X \to X$ defined at the beginning of \refS{S:height-base}.
We may extend these maps to 
\[b: X_\go \to \go, \quad p: X_\go \to X_\go,\]
by setting $b(x)=p(x) =x$, for all $x \in X_\go \setminus X$. Also, for $x\in X_\go \setminus X$, we set 
$[b(x), p(x)] =\{x\}$.
As $b$ is continuous on both $X$ and $\go$, and $X$ and $\go$ are closed sets, $b$ is continuous on $X_\go$. 

We present the properties of the prime ends of $U^*$ in the following lemma. 

\begin{lem}\label{L:prime-ends-impressions}
The following properties hold:  
\begin{itemize}
\item[(i)] For any prime end $P$ of $U^*$ there is $z\in X_\go$ such that the impression of $P$ is equal to $[b(z),p(z)]$. 
On the other hand, for any $z \in X_\go$, there is a prime end of $U^*$ whose impression is equal to $[b(z),p(z)]$.
\item[(ii)] Every prime end of $U^*$ is accessible. Moreover, $w \in X_\go$ is accessible from $U^*$ if and only if 
$w= p(z)$ for some $z \in X_\go$.
\item[(iii)] Distinct prime ends of $U^*$ have disjoint impressions.
\end{itemize}
\end{lem}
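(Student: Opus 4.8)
I would prove the three items of \refL{L:prime-ends-impressions} together, since they all rest on the same geometric picture: $U^*$ is a simply connected domain whose boundary $X_\go$ (by \refL{L:boundary-U}) is, from the point of view of $U^*$, a "hairy curve'' — the Jordan curve $\go$ with arcs of $X$ attached, each arc $[b(z),p(z)]$ being a slit reached from $U^*$ only along its two sides. The key structural fact I would first establish is that $U^*$ deformation-retracts, in a controlled way, onto a neighbourhood of $\go$ with the arcs of $X$ collapsed; more concretely, I would use the nest of marked admissible sets $(U_n,u_n)$ from \refL{L:construction_for_base_curve} and the base curve construction of \refP{P:every_hcs_has_base} to get, for each $n$, a Jordan domain $W_n \supset X$ with $\partial W_n$ close to $X$, so that $U^* \setminus \ol{W_n}$ is an annular neighbourhood of $\go$ in $\hat{\D C}$ and $W_n \cap U^*$ is a disjoint union of "pocket'' domains, one around each component-cluster of $X$. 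The diameters of the pockets go to $0$ as $n\to\infty$ by property (i) of that lemma. This gives a recipe for producing fundamental chains and for pinning down their impressions.

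For item (i), given a prime end $P$ represented by a fundamental chain $(\gO_i)$, the cross-cuts $\partial\gO_i\cap U^*$ have spherical diameter tending to $0$, so eventually they lie in a single pocket of $W_n$ for every large $n$; intersecting the closures, the impression $\bigcap \ol{\gO_i}$ is a connected compact subset of $X_\go$ of the form $\bigcap_n \ol{(\text{pocket})}$. Since the pockets around a fixed point shrink and their closures meet $X$ in sets of the form $[b(z),x]$ (by property (ii) of admissible sets, using the height function), the impression must be exactly one full arc $[b(z),p(z)]$ of a component of $X$, or a single point of $\go\setminus B$, or a point of $B$ — all of which are of the claimed form $[b(z),p(z)]$ once we extend $b,p$ to $X_\go$ as in the text. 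Conversely, given $z\in X_\go$, I would build a fundamental chain explicitly: if $z$ lies on an arc component with peak $p(z)$, take cross-cuts $\gamma_i$ in $U^*$ that loop tightly around the sub-arc $[b(z),p(z)]$, shrinking onto it; the two sides of the slit are both accessible because of axiom $A_5$ (interior arc points are not accessible) — wait, more precisely, $A_5$ says interior points are \emph{not} accessible, which is exactly why the impression is the whole arc rather than a sub-arc, and why the only accessible boundary points are peaks and boundary-curve points. This simultaneously gives the "if'' direction of (ii).

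For the accessibility statement in (ii), I would show every prime end has a unique principal point. Given a fundamental chain with cross-cuts $\gamma_i$ in a shrinking nest of pockets, the endpoints of $\gamma_i$ on $X_\go$ converge (passing to the limit of the pockets) to a single point, which must be the peak $p(z)$ of the arc forming the impression — because approaching along $U^*$ one can only reach the arc at its free tip (again by $A_5$), not along its length or at its base $b(z)\in B$ (points of $B$ are surrounded by arcs on, essentially, all sides, hence not accessible from $U^*$; this needs a short separate argument using axiom $A_6'$ or the density of peaks/arcs). So the principal point set is the singleton $\{p(z)\}$, the prime end is accessible, and the accessible points of $X_\go$ from $U^*$ are precisely the peaks and the points of $\go\setminus B$ — i.e. exactly $\{p(z): z\in X_\go\}$. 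For item (iii), suppose two prime ends $P,P'$ have impressions meeting at a point $w$. Both impressions are arcs (or points) of the form $[b(z),p(z)]$; two such arcs coming from the component structure of $X$ are either equal or disjoint, and $\go$-points give singletons disjoint from the arc-interiors. So the impressions coincide, say both equal $[b(z),p(z)]$; then both prime ends have the same unique principal point $p(z)$, and by the prime-end characterization of accessible points (a point accessible from $V$ determines a unique prime end with that principal point) $P=P'$.

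\medskip

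\textbf{Main obstacle.} The crux is Step 1 — turning the admissible-set/base-curve machinery into genuine \emph{cross-cuts} of $U^*$ that isolate individual arcs of $X$ and whose diameters are controlled — i.e. certifying that the pockets $W_n\cap U^*$ really do shrink onto single arcs $[b(z),p(z)]$ and that \emph{every} fundamental chain is eventually trapped in one pocket. This is where the hypotheses bite: admissibility (properties (ii)–(iii)) guarantees pockets contain whole arcs and have boundary points accessible from both sides, axiom $A_5$ forces impressions to be full arcs rather than proper sub-arcs and kills accessibility of arc interiors, and density of peaks (axiom $A_6'$/$A_6$ via \refP{P:dense-ends}) together with the no-interior-points property ensures $U^*$ genuinely surrounds $X$ so that the pocket picture is accurate near base points. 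Once the pocket picture is in hand, items (i)–(iii) follow by the soft prime-end facts recalled before the lemma (impressions are compact connected boundary subsets, accessible $\iff$ unique principal point, a point accessible from $V$ is the principal point of a unique prime end). I would budget most of the write-up for making the pocket/cross-cut reduction rigorous, and keep the prime-end bookkeeping brief.
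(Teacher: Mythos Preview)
Your overall plan has the right geometric intuition, but there are two genuine gaps.

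\textbf{The pocket construction for (i) does not come from the cited lemmas.} The admissible sets $U_n$ of \refL{L:construction_for_base_curve} satisfy $\cap_{n\geq 1}U_n=B$; they shrink to the base Cantor set, not to individual arcs $[b(z),p(z)]$, and they do not yield Jordan domains $W_n\supset X$ whose intersection with $U^*$ decomposes into pockets around single components of $X$. Since arc components of $X$ accumulate on one another (axiom $A_6$), no plane neighbourhood of an arc isolates it from the others, so the pocket picture you describe cannot be realised as stated. The paper bypasses this entirely: given a fundamental chain $(\gO_i)$ whose impression contains $z$, it notes that the endpoints $\ga_i,\gb_i$ of each cross-cut lie in $b(X_\go)\cup p(X_\go)$ by axiom $A_5$, and uses continuity of the base map $b$ together with the cyclic order on $\go$ to obtain the strict inequalities $b(\ga_i)<b(z)<b(\gb_i)$; these force $[b(z),p(z)]\subseteq\ol{\gO_i}$ for every $i$, and conversely every $w$ in the impression has $b(w)=b(z)$. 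No auxiliary neighbourhoods are needed.

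\textbf{Your argument for (iii) invokes a false general fact.} You write that ``a point accessible from $V$ determines a unique prime end with that principal point.'' This is not true: for the slit disk $\D{D}(0,1)\setminus[0,1)$, every interior point of the slit is accessible and is the sole principal point of \emph{two} distinct prime ends (one from each side). So knowing that two prime ends share the principal point $p(z)$ does not by itself force them to coincide; establishing that in the present setting is exactly the content of (iii), and your argument is circular. The paper gives a direct geometric proof: assuming $(\gO^1_i)$ and $(\gO^2_i)$ represent distinct prime ends with the same impression $[b(y),p(y)]$, it takes $j$ with $\gO^1_j\cap\gO^2_j=\emptyset$, forms the Jordan domain $W_j=\interior(\ol{\gO^2_j}\cup\ol{U'})$, checks that $[b(y),p(y)]\subseteq W_j$, and observes that points $y_i\in\gO^1_i$ with $d(y_i,[b(y),p(y)])\to 0$ must eventually lie in $W_j\cap U=\gO^2_j$, contradicting $\gO^1_j\cap\gO^2_j=\emptyset$.
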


\begin{proof}
{\em (i):} 
By \refL{L:boundary-U}, $\partial U^*= X_\go$. Therefore, any $z\in X_\go$ belongs to the impression of a prime 
end of $U^*$. 
That is, there is a fundamental chain $(\gO_i)_{i \geq 1}$ in $U^*$ whose impression contains $z$.
We aim to show that $\cap_{i\geq 1} \ol{\gO_i}= [b(z), p(z)]$. 

For $i \geq 1$, $\overline{\partial \gO_i \cap U^*}$ is a Jordan arc connecting two points on $X_\go$. 
Let $\ga_i$ and $\gb_i$ denote those points. 
Since $\ol{\gO_{i}} \cap X_\go$ is connected, for all  $i\geq 1$, $b(\ol{\gO_{i}} \cap X_\go)$ is a closed arc on 
$\go$, which is bounded by $b(\ga_i)$ and $b(\gb_i)$. 
As $z$ belongs to $\partial \gO_i \cap X_\go$, we must have $b(\ga_i) \leq b(z) \leq b(\gb_i)$, with respect to a fixed 
cyclic order on $\go$. 
On the other hand, as $i\to \infty$, $\diams(\partial \gO_i \cap U^*) \to 0$, which implies that $\ds (\ga_i, \gb_i)\to 0$. 
By the uniform continuity of $b: X_\go \to \go$, we conclude that 
\[\lim_{i\to \infty} b(\ga_i)=b(z) = \lim_{i\to \infty} b(\gb_i).\]

By the definition of hairy Cantor sets, the only accessible points of $X$ from $\D{C} \setminus X$ belong to 
$b(X) \cup p(X)$. It follows that $\ga_i$ and $\gb_i$ belong to $b(X_\go) \cup p(X_\go)$. 
Using $\ol{(\partial \gO_i \cap U^*)} \cap \ol{(\partial \gO_j \cap U^*)}=\emptyset$, for $j > i \geq 1$, 
one concludes that for all $i\geq 1$, 
\begin{equation}\label{E:L:prime-ends-impressions-2}
b(\ga_i) < b(z) < b(\gb_i).
\end{equation}
In particular, $[b(z), p(z)] \subset \ol{\gO_i}$, for all $ i\geq 1$, and hence, 
$[b(z), p(z)] \subseteq \cap_{i\geq 1}\ol{\gO_i}$.
On the other hand, for all $w \in \cap_{i\geq 1} \ol{\gO_i}$, $b(\ga_i) \leq b(w) \leq b(\gb_i)$, for all $i\geq 1$. 
By the above paragraph, $b(\ga_i) \to b(z)$ and $b(\gb_i) \to b(z)$, which leads to $b(w)=b(z)$. 
Therefore, $\cap_{i\geq 1} \ol{\gO_i} \subseteq [b(z), p(z)]$. Combining the two relations, we obtain 
$\cap_{i\geq 1} \ol{\gO_i}= [b(z), p(z)]$.  

\medskip

{\em (ii):} 
Let $P$ be a prime end in $U^*$, and let $(\gO_i)_{i \geq 1}$ be a fundamental chain in $U^*$ in the class of $P$ 
which has  a principal point, say $y \in X_\go$. We aim to show that $p(y)=y$. 

Assume in the contrary that $p(y) \neq  y$. In particular, $b(y)\neq p(y)$, and $[b(y), p(y)]$ is a Jordan arc. 
We may extend $[b(y), p(y)]$ into $U'$ so that we obtain a Jordan arc $\gamma$ in $[b(y), p(y)]\cup U'$ 
with of the end points of $\gamma$ in $U'$. 
There is a Jordan domain $D$ such that $[b(y), y] \subset D$, $p(y) \notin \ol{D}$, $\partial D \cap \go$ consists of two 
points, and $D \setminus \gamma$ has two connected components. 

Let $\ga_i$ and $\gb_i$ denote the landing points of $\partial \gO_i \cap U^*$ on $X_\go$. Since $y$ is the principal 
point of $(\gO_i)_{i \geq 1}$, we must have $\ga_i \to y$ and $\gb_i \to y$, as $i \to \infty$. 
Moreover, since $y$ belongs to the impression of $(\gO_i)_{i\geq1}$, as in \refE{E:L:prime-ends-impressions-2},
we have $b(\ga_i) < b(y) < b(\gb_i)$, for all $i\geq 1$. 
By the choice of $D$, this implies that for large enough $i$, $\ga_i$ and $\gb_i$ belong to distinct components of 
$D \setminus \gamma$.
On the other hand, $\ga_i$ and $\gb_i$ belong to $\ol{\partial \gO_i \cap U^*}$, 
with $\ol{\partial \gO_i \cap U^*}  \cap \gamma=\emptyset$ and $\diams (\ol{\partial \gO_i \cap U^*}) \to 0$. 
This implies that for large enough $i$, both $\ga_i$ and $\gb_i$ belong to the same component of $D \setminus \gamma$. 
This contradiction shows that we must have $p(y)=y$. 

Let $P$ be an arbitrary prime end of $U^*$. 
By part (i), the impression of $P$ is of the form $[b(z), p(z)]$, for some $z\in X_\go$. 
By the above argument, if $y$ is a principal point of $P$, we must have $y=p(y)$. 
Since $y\in [b(z), p(z)]$, and there is a single point $y$ in $[b(z), p(z)]$ with $p(y)=y$, we conclude that $P$ 
has a unique principle point. Thus, $P$ is accessible. 
On the other hand, by Part (i), for any $z\in X_\go$, there is a prime end of $U^*$ whose impression is equal 
to $[b(z), p(z)]$. 
By the above argument, $p(z)$ is the unique principal point of that prime end. 
Therefore, the set of principal points of the prime ends of $U^*$ is equal to $\{p(z) \mid z\in X_\go\}$. 

\medskip 

{\em (iii):}
Assume in the contrary that there are distinct prime ends $P^1$ and $P^2$ in $U$ whose impressions intersect.
By part (i), the impressions of $P_1$ and $P_2$ must be equal to $[b(y), p(y)]$, for some $y \in X_\go$.
Let $(\Omega_i^1)_{i \geq 1}$ and $(\Omega_i^2)_{i \geq 1}$ be fundamental chains in the classes of $P^1$ and $P^2$, 
respectively.
Since $P_1$ and $P_2$ are not equivalent, there is $j \geq 1$ such that $\Omega_j^1 \cap \Omega_j^2 = \emptyset$.
For $i \geq 1$, choose $y_i \in \Omega_i^1$.
We have $d(y_i, [b(y), p(y)]) \to 0$.

Let $\ga_i$ and $\gb_i$ denote the landing points of $\partial \gO_i \cap U^*$ on $X_\go$. We may relabel these points so 
that $\ga_i < b(y) < \gb_i$, with respect to a fixed cyclic order on $\go$. 
The open set
\[W_j =  \interior(\ol{\Omega^2_{j}} \cup \ol{U'}) \]
is a Jordan domain, which is bounded by
\[\ol{\partial \Omega^2_{j} \cap U} \cup [\ga_{j}, b(\ga_{j})] \cup [\gb_{j}, b(\gb_{j})] \cup 
\{ w \in \omega \mid b(\gb_{j}) \leq b(w) \leq b(\ga_{j}) \}). \]
In particular, $[b(y), p(y)] \subseteq W_j$.
Since $d(y_i, [b(y), p(y)]) \to 0$, as $i \to \infty$, there must be $i > j$ such that $y_i \in W_j$, and hence 
$y_i \in W_j \cap U=\Omega^2_j$.
Therefore, 
\[y_i \in \Omega_i^1 \cap \Omega_j^2 \subseteq \Omega_j^1 \cap \Omega_j^2 = \emptyset,\]
which is a contradiction.
\end{proof} 

Let $V$ be a connected and simply connected domain in $\hat{\D{C}}$ whose boundary contains at least two points.
Let $\gy : \D{D}(0,1) \to V$ be a Riemann map, that is, a one-to-one and onto holomorphic map. 
By the general theory of prime ends, for any fundamental chain $(\Omega_i)_{i \geq 1}$ in $V$, 
$(\gy^{-1}(\Omega_i))_{i \geq 1}$ is a fundamental chain in $\D{D}(0,1)$. 
This correspondence induces a bijection between the set of prime ends of $V$ and the set of prime ends of $\D{D}(0,1)$.

The prime ends of $\D{D}(0,1)$ are easy to understand. Any prime end of $\D{D}(0,1)$ is accessible, 
and distinct prime ends of $\D{D}(0,1)$ have distinct principal points. 
The map which sends a prime end of $\D{D}(0,1)$ to its unique principal point 
induces a bijection between the set of prime ends of $\D{D}(0,1)$ and $\partial \D{D}(0,1)$. 
Combining with the above paragraph, $\gy$ induces a one-to-one correspondence between $\partial \D{D}(0,1)$ and 
the set of the prime ends of $V$. 
Let $P$ be a prime end of $V$ which is the image of $e^{i \theta} \in \partial \D{D}(0,1)$ under this bijection and 
$P$ has a unique principal point in $\partial V$. 
Then, the radial limit $\lim_{r \to 1^-} \gy(r e^{i\theta})$ exists and is equal to the principal point of $P$. 
See Corollary 2.17 in \cite{Po92}.

By \refP{P:plane_minus_based_hcs_has_two_components}, the set $U^*$ is connected, and simply connected. 
We may consider a Riemann map
\[\gF: \hat{\D{C}} \setminus \ol{\D{D}(0,1)} \to U^*,\]
with $\gF(\infty)= \infty$. 
By the above paragraphs, $\gF$ induces a bijection between the set 
$\partial \D{D}(0,1) = \partial (\hat{\D{C}}\setminus \ol{\D{D}(0,1))}$ and the set of the prime ends of $U^*$.
By \refL{L:prime-ends-impressions}, every prime end in $U^*$ is accessible, and has a unique principal point 
which belongs to $\{p(z) \mid z\in X_\go\}$. Thus, there is a bijection between $\partial \D{D}(0,1)$ and $p(X_\go)$. 
For every $\theta \in \D{R}$, the radial limit 
\[\lim_{r \to 1^+} \gF(r e^{i\theta})\] 
exists and belongs to $p(X_\go)$. 
Moreover, for $\theta \neq \theta'$ in $\D{R}/2\pi \D{Z}$, 
$\lim_{r \to 1^+} \gF(r e^{i\theta}) \neq \lim_{r \to 1^+} \gF(r e^{i \theta'})$.

We extend the base map $b: X_\go \to \go$ to 
\begin{equation}\label{E:base-map-extended}
b: U \cup X_\go \to \go
\end{equation}
as follows. 
For an arbitrary $x \in U$ we let $\theta(x)= \arg (\gF^{-1}(x)) \in \D{R}/2\pi \D{Z}$, and define 
\[b(x) =b\Big ( \lim_{r \to 1^+} \gF\big(r e^{i\theta(x)}\big)\Big).\] 

For each $w\in \go$, there is a unique $\theta \in \D{R}/2\pi \D{Z}$ such that $\lim_{r \to 1^+} \gF(r e^{i\theta})=p(w)$. 
Then, for all $w'$ on the curve $\gF(\{re^{i\theta} \mid r > 1\})$, $b(w')= b(p(w))=w$. 
This implies that for each $w\in \go$, the set $b^{-1}(w)$ in $U \cup X_\go$ is a Jordan arc consisting of $[w, p(w)]$ and 
$\gF(\{re^{i\theta} \mid r > 1\})$. This curve meets $\go$ only at $w$.

\begin{propo}\label{P:b-is-continuous}
The map $b: U \cup X_\go \to \go$ is continuous.
\end{propo}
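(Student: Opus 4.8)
The plan is to prove continuity by sequences, reducing everything to two inputs: the classical boundary behaviour of the Riemann map $\gF$ — concretely, the fact that the cluster set of $\gF$ at a point of $\partial\D{D}(0,1)$ is contained in the impression of the corresponding prime end of $U^*$ (see \cite{Caratheodory1913,Po92}) — and \refL{L:prime-ends-impressions}, which identifies every such impression with an arc $[b(z),p(z)]$ on which $b$ is constant, together with the continuity of $b$ on $X_\go$ noted above. The point is that, although $\gF$ itself does not extend continuously to $\partial\D{D}(0,1)$ (as $X_\go$ is not locally connected), the map $b$ built from it is continuous up to $X_\go$, precisely because $b$ collapses each prime-end impression to a single point of $\go$. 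It is convenient to introduce $\beta\colon\partial\D{D}(0,1)\to\go$, $\beta(\zeta)=b\big(\lim_{r\to1^+}\gF(r\zeta)\big)$, so that $b(x)=\beta\big(\gF^{-1}(x)/|\gF^{-1}(x)|\big)$ for $x\in U$, a continuous function of $x$ since $\gF^{-1}$ is continuous on $U$.

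The crux is the following observation, call it $(\star)$: if $\zeta_n\in\hat{\D{C}}\setminus\ol{\D{D}(0,1)}$, $\zeta_n\to\zeta_0\in\partial\D{D}(0,1)$ and $\gF(\zeta_n)\to w$, then $b(w)=\beta(\zeta_0)$. Indeed, by the cited prime-end theory $w$ lies in the impression of the prime end of $U^*$ at $\zeta_0$, which by \refL{L:prime-ends-impressions} is an arc $[b(z_0),p(z_0)]$ on which $b$ is constant; since the principal point $\lim_{r\to1^+}\gF(r\zeta_0)$ also belongs to this arc, the constant value of $b$ on it equals $\beta(\zeta_0)$, so $b(w)=\beta(\zeta_0)$.

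From $(\star)$ I would first deduce that $\beta$ is continuous. Given $\zeta_n\to\zeta_0$ on $\partial\D{D}(0,1)$, pick $\rho_n\in(1,1+1/n)$ with $\big|\gF(\rho_n\zeta_n)-\lim_{r\to1^+}\gF(r\zeta_n)\big|<1/n$; then $\rho_n\zeta_n\to\zeta_0$, so by $(\star)$ every limit point $w$ of $\big(\gF(\rho_n\zeta_n)\big)$ satisfies $b(w)=\beta(\zeta_0)$, and along a subsequence realising such a $w$ the points $\lim_{r\to1^+}\gF(r\zeta_n)\in X_\go$ converge to $w$ as well, so $\beta(\zeta_n)=b\big(\lim_{r\to1^+}\gF(r\zeta_n)\big)\to b(w)=\beta(\zeta_0)$ by continuity of $b$ on $X_\go$; as $\beta(\zeta_0)$ does not depend on the subsequence, $\beta(\zeta_n)\to\beta(\zeta_0)$. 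Continuity of $b$ on $U$ now follows at once. For continuity at a point $x_0\in X_\go$, take $x_n\to x_0$ in $U\cup X_\go$: the indices with $x_n\in X_\go$ give $b(x_n)\to b(x_0)$ by continuity of $b$ on $X_\go$, so it remains to handle a subsequence with $x_n\in U$; put $\zeta_n=\gF^{-1}(x_n)$. Since $x_0\in X_\go=\partial U^*$ and $x_0\neq\infty=\gF(\infty)$, every limit point of $(\zeta_n)$ in the compact set $\hat{\D{C}}\setminus\D{D}(0,1)$ lies on $\partial\D{D}(0,1)$ — otherwise $x_n\to\gF(\zeta_*)\in U^*$ along a subsequence, impossible since $U^*\cap X_\go=\emptyset$. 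Passing to a subsequence with $\zeta_n\to\zeta_0\in\partial\D{D}(0,1)$, $(\star)$ applied with $w=x_0=\lim\gF(\zeta_n)$ gives $b(x_0)=\beta(\zeta_0)$, while $\zeta_n/|\zeta_n|\to\zeta_0$ gives $b(x_n)=\beta\big(\zeta_n/|\zeta_n|\big)\to\beta(\zeta_0)=b(x_0)$ by continuity of $\beta$; since this limit is always $b(x_0)$, we obtain $b(x_n)\to b(x_0)$ along the whole subsequence, and combined with the $X_\go$-part, $b(x_n)\to b(x_0)$.

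The main obstacle is precisely this last case — a sequence inside $U$ tending to a point of $X_\go$, where neither $\gF^{-1}$ extends continuously to the boundary nor does the angular coordinate $\zeta_n/|\zeta_n|$ evidently converge. It is here that $(\star)$, i.e. the prime-end cluster-set theorem, together with the geometric description of impressions in \refL{L:prime-ends-impressions}, is indispensable, and where the subsequence bookkeeping must be carried out carefully.
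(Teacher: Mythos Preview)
Your proof is correct, and it takes a genuinely different route from the paper's.

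The paper does not argue by sequences at all. Instead it shows that $b^{-1}(I)$ is closed in $U\cup X_\go$ for every closed arc $I\subseteq\go$ (which suffices, since closed subsets of $\go$ are intersections of closed arcs). The argument is purely topological: for the endpoints $\ga,\gb$ of $I$, the fibres $b^{-1}(\ga)$ and $b^{-1}(\gb)$ are disjoint Jordan arcs running from $\go$ out to infinity, so they cut the closed region $U\cup X_\go=\D{R}^2\setminus U'$ into two closed pieces $\ol{V_1},\ol{V_2}$. Since every fibre $b^{-1}(w)$ is a connected arc from $w\in\go$ to infinity, it cannot cross these separating arcs and hence lies entirely in one piece; this identifies one piece, say $\ol{V_1}$, with $b^{-1}(I)$.

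Your approach instead reduces everything to the classical fact that the cluster set of the Riemann map $\gF$ at a boundary point equals the impression of the corresponding prime end, combined with \refL{L:prime-ends-impressions}, which says each impression is a single $b$-fibre. This makes transparent \emph{why} $b$ extends continuously across $X_\go$ even though $\gF$ does not: $b$ collapses exactly the impressions. The paper's argument, by contrast, is more elementary and self-contained --- it needs only the fibre structure already established before the proposition, and no further appeal to cluster-set theory --- but the separation claim (``$b^{-1}(\ga)\cup b^{-1}(\gb)$ divides $U\cup X_\go$ into two Jordan domains'') requires a moment's topological thought. Both proofs are short; yours trades that topological step for an invocation of a standard boundary-behaviour theorem.
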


\begin{proof}
We aim to prove that $b^{-1}(I)$ is closed in $U \cup X_\go$, for every closed set $I \subseteq \go$.
Since $\go$ is a Jordan curve, it is sufficient to prove that $b^{-1}(I)$ is closed whenever $I \subseteq \go$ is 
either a point or a Jordan arc.
If $I$ is a single point, then $b^{-1}(I)$ is a Jordan arc and therefore is closed.
Let $I \subseteq \go$ be a Jordan arc and let $\ga$ and $\gb$ be its end-points.
We know that $b^{-1}(\ga)$ and $b^{-1}(\gb)$ are disjoint Jordan arcs which land at $\ga$ and $\gb$ on $\go$. 
Therefore, $b^{-1}(\ga) \cup b^{-1}(\gb))$ divides $U\cup X_\go$ into two disjoint Jordan domains, denoted by 
$V_1$ and $V_2$.
The sets $\ol{V_1} \cap \go$ and $\ol{V_2} \cap \go$ are Jordan arcs in $\go$ with end points at $\ga$ and $\gb$.
Hence, one of these sets, say $\ol{V_1} \cap \go$, is equal to $I$.
On the other hand, $U \cup X_\go$ is a disjoint union of $b^{-1}(w)$, for $w \in \go$.
Therefore, $\ol{V_1}$ is a disjoint union of such arcs. This implies that 
\[\ol{V_1} = b^{-1}(b(\ol{V_1}))= b^{-1}(\ol{V_1} \cap \go) = b^{-1}(I). \qedhere\]
\end{proof}

For $w \in \go$, $b^{-1}(w)$ is a Jordan arc in $U \cup X_\go$ which lands at a single point on $\go$. 
For $x$ and $y$ in $b^{-1}(w)$, we define $[x, y]$ to be the unique Jordan arc in $b^{-1}(w)$ which ends at $x$ and $y$. 
When $x=y$, we have $[x,y]=\{x\}$. 
Note that if $x$ and $y$ belong to $X_\go \cap b^{-1}(w)$, this notation is consistent with our earlier notation $[x, y]$.

\begin{propo}\label{P:bracket-continuous}
For any convergent sequence $(x_i)_{i \geq 1}$ in $U \cup X_\go$ with $x_i \to x$, we have 
\[\lim_{i \to \infty} [x_i, b(x_i)] = [x, b(x)],\]
in the Hausdorff topology.
\end{propo}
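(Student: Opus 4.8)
The plan is to argue by compactness. Fix the convergent sequence $x_i\to x$ in $U\cup X_\go$. All the arcs $[x_i,b(x_i)]$ lie in a single fixed compact region of the plane: their hair parts lie in the compact set $X$, and, for the indices with $x_i\in U$, their $U$-parts are $\gF$-images of radial segments of bounded modulus (as $x_i$ stays bounded, $|\gF^{-1}(x_i)|$ stays bounded). Hence it suffices to show that every subsequential Hausdorff limit $K$ of $\big([x_i,b(x_i)]\big)_{i\ge1}$ equals $[x,b(x)]$; pass to such a subsequence. Being a Hausdorff limit of compact connected sets, $K$ is compact and connected. By \refP{P:b-is-continuous} we have $K\subseteq b^{-1}(b(x))$: if $y=\lim y_i$ with $y_i\in[x_i,b(x_i)]\subseteq b^{-1}(b(x_i))$ then $b(y_i)=b(x_i)\to b(x)$, so $b(y)=b(x)$. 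Moreover $x\in K$ (as $x_i\to x$) and $b(x)\in K$ (as $b(x_i)\to b(x)$, again by \refP{P:b-is-continuous}). Now $b^{-1}(b(x))$ is a Jordan arc with one endpoint $b(x)\in\go$; orienting it from $b(x)$, the set $[x,b(x)]$ is exactly its initial sub-arc up to $x$, so, $K$ being a connected subset of this arc containing $b(x)$ and $x$, we get $K\supseteq[x,b(x)]$.

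It remains to prove the reverse inclusion $K\subseteq[x,b(x)]$, i.e.\ that $K$ does not overshoot $x$ along the leaf. If $x\in U$ this follows from continuity of $\gF$ alone: for large $i$, $x_i\in U$ and $\gF^{-1}(x_i)\to\gF^{-1}(x)$, and $[x_i,b(x_i)]\cap U$ is the $\gF$-image of the radial segment from $\gF^{-1}(x_i)$ toward $\partial\D{D}(0,1)$; a point $y\in K$ strictly beyond $x$ on the leaf would lie in $U$ and be a limit of points of these images, forcing $|\gF^{-1}(y)|\le|\gF^{-1}(x)|$, contrary to $y$ lying beyond $x$. Now suppose $x\in X_\go$. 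I claim $K\subseteq X_\go$: for the indices with $x_i\in U$, the $U$-part of $[x_i,b(x_i)]$ is $\gF$ of a radial segment with one endpoint $\gF^{-1}(x_i)$ and the other on $\partial\D{D}(0,1)$, and $|\gF^{-1}(x_i)|\to1$ because $x_i\to x\in X_\go=\partial U^*$ (\refL{L:boundary-U}); so these segments shrink to $\partial\D{D}(0,1)$, whence (as $\gF$ is proper) their $\gF$-images cluster on $\partial U^*=X_\go$, while the $X$-parts of $[x_i,b(x_i)]$ lie in $X$. Thus $K\subseteq b^{-1}(b(x))\cap X_\go$, which equals the hair $[b(x),p(b(x))]$ at $b(x)$. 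If that hair is degenerate then $K=\{b(x)\}$, forcing $x=b(x)$ and $K=[x,b(x)]$; otherwise $b(x)\in B$ and $K$ is a nontrivial sub-arc of the hair containing $b(x)$ and $x$, and it only remains to rule out that $K$ extends past $x$ along the hair.

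For this, fix a height function $h:X\to[0,+\infty)$ (\refP{P:X-admits-height}); $h$ is continuous, vanishes exactly on $B$, and is strictly increasing along each hair from its base. A point of $K$ above $x$ on the hair would be a limit of points $y_i\in[x_i,b(x_i)]\cap X$ with $\lim h(y_i)>h(x)$. When $x_i\in X$, $[x_i,b(x_i)]\cap X=[b(x_i),x_i]$ and axiom $A_4$ gives $[b(x_i),x_i]\to[b(x),x]$, so $h(y_i)\le h(x_i)\to h(x)$ — a contradiction. The delicate case is $x_i\in U$: then $[x_i,b(x_i)]\cap X$ is the \emph{entire} hair $[b(x_i),p(b(x_i))]$, so one must show that these hairs do not grow past $x$, i.e.\ $\limsup_i h\big(p(b(x_i))\big)\le h(x)$. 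Passing to a subsequence with $p(b(x_i))\to q$, continuity of $b$ puts $q$ on the hair at $b(x)$ and $A_4$ gives $[b(x_i),p(b(x_i))]\to[b(x),q]$; excluding the possibility that $q$ lies strictly above $x$ is the main obstacle. The expected argument is that if $q$ were strictly above $x$, then, since the $U$-parts of the $[x_i,b(x_i)]$ (the $\gF$-images of the shrinking radial segments) would accumulate onto the sub-hair $[x,q]$, one could build a curve in $U\subseteq\D{R}^2\setminus X$ landing at the interior hair point $x$ — contradicting axiom $A_5$ — the control of these $U$-parts near $\partial U^*$ being where the hyperbolic geodesics $\gF(\{re^{\mathbf{i}\theta}:r>1\})$ of $U^*$ and the Gehring–Hayman theorem come in. Once this overshoot is excluded, $K=[x,b(x)]$, completing the proof; the remaining verifications are routine.
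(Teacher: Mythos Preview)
Your reduction to subsequential Hausdorff limits, the argument that $K\subseteq b^{-1}(b(x))$ is a connected sub-arc containing $[b(x),x]$, the treatment of the case $x\in U$, and the observation that $K\subseteq X_\go$ when $x\in X_\go$ (via $|\gF^{-1}(x_i)|\to 1$) are all correct, and together dispose of the paper's Case~(i). Your handling of the sub-case $x_i\in X$ via axiom $A_4$ is also fine.

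The genuine gap is the ``delicate case'' you yourself flag: $x\in X_\go$, $x_i\in U$, and the full hairs $[b(x_i),p(b(x_i))]$ possibly converging to $[b(x),q]$ with $q>x$. This is the entire substance of the proposition, and your sketch does not prove it. The proposed mechanism --- ``build a curve in $U$ landing at the interior hair point $x$, contradicting $A_5$'' --- does not work as written: having $x_i\in U$ with $x_i\to x$ does \emph{not} produce a curve in $U$ landing at $x$, since arcs in $U$ joining consecutive $x_i$ need not stay near $x$ (this is exactly the failure of local connectivity of $\partial U^*$). Moreover the sketch breaks when $x=b(x)\in B$, where $x$ \emph{is} accessible and $A_5$ is vacuous. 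You invoke Gehring--Hayman but do not say what curve it is applied to or what it bounds.

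The paper's argument for this case is concrete and rather different. Fixing $x<z<w<y$ on the limit hair, one builds two transversal Jordan arcs $\lambda^z,\lambda^w$ in the strip between the leaves $\gamma_1$ and $\gamma$, landing at $z$ and $w$. For large $i$ the leaf $\gamma_i$ crosses $\lambda^w$ on both sides of $x_i$, producing a Jordan domain $W_i$ bounded by a sub-arc $\lambda_i^w\subset\lambda^w$ and a piece of $\gamma_i$, and inside it a separating sub-arc $\lambda_i^z\subset\lambda^z$ with endpoints on $\gamma_i$ straddling $x_i$. Then one argues by trichotomy: if $\lambda_i^z$ misses $X$, the leaf-segment $[\lambda_i^z(0),\lambda_i^z(1)]$ lies in $U$ and is a hyperbolic geodesic, so Gehring--Hayman bounds its diameter by that of the shrinking chord $\lambda_i^z$, contradicting $\diam[\lambda_i^z(0),\lambda_i^z(1)]\ge d(z,x)>0$; if $\lambda_i^z$ meets $X$ at an endpoint or in its interior, axiom $A_4$ (not $A_5$) forces the corresponding hair segments to converge to $[z,b(x)]$, contradicting that they must also pass through points tending to $w$. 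None of this is routine, and your proposal does not supply it.
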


When $x_i \in X$, for all $i\geq 1$, we already have the convergence of $[x_i, b(x_i)]$ to $[x, b(x)]$ from the definition of 
hairy Cantor sets. 
When $x_i \notin X$, $[x_i, b(x_i)]$ is the union of $[b(x_i), p(x_i)]$ and $[p(x_i), x_i] \setminus \{p(x_i)\}$, 
where the latter set is a hyperbolic geodesic in $U^*$. 
For the above proposition, we need to show that such a sequence of hyperbolic geodesics may not have pathological
behaviour. 
We control the location of those hyperbolic geodesic by employing a theorem of Gehring and Hayman, see \cite{GeHa62} 
or \cite[Page 88]{Po92}. 
That is, let $V \subseteq \hat{\D{C}}$ be a simply connected domain whose boundary contains at least two points, and let 
$x$ and $y$ be distinct points in $V$. Assume that $\lambda$ is an arbitrary curve in $V$ connecting $x$ to $y$, 
and $\gamma$ is a geodesic in $V$, with respect to the hyperbolic metric on $V$, connecting $x$ to $y$. 
By Gehring-Hayman theorem, 
\[\diams(\gamma) \leq C_{GH} \diams(\lambda),\]
where $C_{GH}$ is a universal constant independent of $V$, $x$, $y$, and $\lambda$. 

For $r \geq 1$, let 
\begin{equation}\label{E:A_R}
A_r = \gF\big( \{w\in \D{C} \mid 1< |w| \leq r \}\big) \cup X_\go.
\end{equation}
This is a compact annulus in $\D{C}$. 

\begin{proof}[Proof of \refP{P:bracket-continuous}]
For any $w\in \go$, $b^{-1}(w) \subset U\cup X_\go$ is a Jordan arc which is homeomorphic to $[0, 1)$. 
So there is a linear order on each $b^{-1}(w)$, where $w=b(w)$ is the smallest point.

Let us fix $R\geq 1$ such that for all $i\geq 1$, $x_i \in A_R$, where $A_R$ is the compact annulus defined 
by \refE{E:A_R}.
Then, for all $i\geq 1$, $[x_i, b(x_i)] \subset A_R$. 
The set of non-empty compact subsets of $A_R$ with respect to the Hausdorff topology is compact. 
Therefore, to prove the proposition, it is sufficient to prove that if $[x_i, b(x_i)] )_{i \geq 1}$ is a 
convergent sequence 
in the Hausdorff topology, then $[x_i, b(x_i)] \to [x, b(x)]$.

Assume that $([x_i, b(x_i)])_{i\geq 1}$ converges in the Hausdorff topology, and let $K$ denote the limit 
of this sequence. 
Then, 
\[K = \{ y \in A_R \mid y = \lim_{i \to \infty} y_i, y_i \in [x_i, b(x_i)], \forall i\geq 1\}.\]
By \refP{P:b-is-continuous}, $b:U \cup X_\go \to \go$ is continuous. 
This implies that $K$ is contained in $b^{-1}(b(x))$.
On the other hand, since each $[x_i, b(x_i)]$ is connected, $K$ must be also connected.
Therefore, $K = [y, b(x)]$, for some $y \in b^{-1}(b(x))$. 
Moreover, since $x_i \to x$, we have $x\in K$, and hence $x \leq y$, with respect to the linear order 
on $b^{-1}(b(x))$. 
We need to show that $x=y$. 
Assume in the contrary that $y > x$. We shall derive a contraction by considering two cases. 

\medskip

{\em Case (i):} Assume that $y > x$ and $y \in U$. Define 
\[r = \begin{cases}
(|\gF^{-1}(y)| + |\gF^{-1}(x)|)/2 &\tif	x \in U,\\
(|\gF^{-1}(y)| + 1)/2 &\tif	x \in X_\go.
\end{cases}\]
Consider the compact annulus $A_r$ defined by \refE{E:A_R}.
Since $x \in \op{int} (A_r) \cup \go$, and $x_i \to x$, there is an integer $i_0$ such that for all $i \geq i_0$, 
$x_i \in A_r$. 
In particular, for $i\geq i_0$, $[x_i, b(x_i)]$ is contained in $A_r$. 
As $A_r$ is compact, the Hausdorff limit of $[x_i, b(x_i)]$, that is $K$, must be contained in $A_r$. 
But, $y\in K$ and $y \notin A_r$, since $|\gF^{-1}(y)| > r$. This is a contradiction. 

\medskip

{\em Case (ii):} Assume that $y > x$ and $y \in X_\go$. 
By passing to a subsequence, we may assume that $(b(x_i))_{i \geq 1}$ is monotone on $\go$, that is, 
for all $i\geq 1$, $b(x_i) < b(x_{i+1}) < b(x)$, with respect to a fixed cyclic order on $\go$. 
Define 
\[\gamma = b^{-1}(b(x)) \cap A_R , \quad \gamma_i = b^{-1}(b(x_i)) \cap A_R,\; \forall i\geq 1.\]
By virtue of the continuity of $b: A_R \to \go$ in \refP{P:b-is-continuous}, $\gamma_i \to \gamma$, as $i\to \infty$, 
with respect to the Hausdorff topology. 

The Jordan arcs $\gamma$ and $\gamma_1$ divide $A_R$ into two Jordan domains. Let  
\[E=\{ w \in \op{int} (A_R) \mid b(x_1) < b(w) < b(x) \}.\footnote{$\op{int}(A)$ denotes the topological interior of 
a given set $A \subseteq \D{C}$.}\]

Since $x< y$ in this case, we may choose two points $z$ and $w$ in $\gamma$ such that $x < z < w < y$.
There are Jordan arcs $\lambda^z : [0, 1] \to \ol{E}$ and $\lambda^w: [0,1] \to \ol{E}$ such that
\begin{equation*}
\begin{gathered}
\lambda^z(0) \in \gamma_1, \quad \lambda^z(1) = z \in \gamma, \quad \lambda^z((0, 1)) \subseteq E, \\
\lambda^w(0) \in \gamma_1, \quad \lambda^w(1) = w \in \gamma, \quad \lambda^w((0, 1)) \subseteq E,
\end{gathered}
\end{equation*}
and 
\[\lambda^z([0, 1]) \cap \lambda^w([0, 1])= \emptyset.\]
Each of the curves $\lambda^z((0,1))$ and $\lambda^w((0,1))$ divides $E$ into two connected components. 

\begin{figure}[ht]
\begin{pspicture}(-1.2,0)(10,5)
\pscurve[linewidth=.5pt](.5,.8)(1,1)(2,1.2)(4,1.5)(6,1.5)(6.5,1.5)(8.7,1.2)(9.2,1) 
\pscurve[linewidth=.5pt,linecolor=blue](.5,3.8)(.75,3.9)(3,4.4)(5.3,4.5)(8,4.2)(9.2,3.9)(9.5,3.8) 
\pscurve[linewidth=.5pt,linecolor=blue](1,1)(1.2,2)(.9,3)(.75,3.9) 
\pscurve[linewidth=.5pt,linecolor=blue](8.7,1.2)(9,2)(9,3)(9.2,3.9) 
\pscurve[linewidth=.5pt,linecolor=red](1.2,2)(3,2.3)(4,2.4)(6,2.4)(9,2) 
\pscurve[linewidth=.5pt,linecolor=red](.9,3)(2,3.15)(5,3.5)(6.6,3.35)(9,3) 

\pscurve[linewidth=.5pt,linecolor=blue](2,1.2)(2,3.15)(2.2,4)(2.4,3)(2.5,3.5)(2.6,3)(2.8,1.7)(3,2.3)(3,4.4) 

\pscurve[linewidth=.5pt,linecolor=blue](6,1.5)(6,2.4)(5.9,4)(5.7,3)(5.5,1.9)(4.8,3.5)(4.8,3.8)(5.1,3.3)(5.3,4.5) 

\pscurve[linewidth=.5pt,linecolor=blue](6.5,1.5)(6.6,3.35)(6.8,4)(6.8,3.3)(6.8,2.3)(6.9,2.1)(6.95,2.3)(7.1,3.3)(7.2,3.6)(7.3,3.3)(7.4,2.2)(7.5,1.8)(7.65,2.2)(8,4.2) 

\pscurve[linewidth=.5pt](6.2,1.5)(6.3,3.35)(6.8,4.3)(7.3,3.8)(7.4,3.3)(7.5,2.2)(7.5,2) 

\psecurve[linewidth=.5pt](2,.8)(2,1.2)(2,3.15)(2.2,4)(2.4,3)(2.5,3.5)(2.6,3)(2.8,1.7)(3,2.3) 
\psecurve[linewidth=.5pt](6,1.2)(6,1.5)(6,2.4)(5.9,4)(5.7,3)(5.5,1.9) 
\psecurve[linewidth=.5pt](6.5,1.2)(6.5,1.5)(6.5,2)(6.6,3.35)

\psecurve[linewidth=1pt,linecolor=green](1.2,2)(2.58,2.23)(3,2.3)(4,2.4)
\psecurve[linewidth=1pt,linecolor=green](4,2.4)(5,2.41)(5.7,2.4)(6,2.4)
\psecurve[linewidth=1pt,linecolor=green](6,2.4)(7.42,2.23)(7.65,2.2)(9,2) 

\psecurve[linewidth=1pt,linecolor=green](2,3.15)(2.6,3.23)(3.05,3.3)(5,3.5) 
\psecurve[linewidth=1pt,linecolor=green](5,3.5)(5.3,3.5)(5.7,3.45)(6.6,3.35) 
\psecurve[linewidth=1pt,linecolor=green](6.6,3.35)(7.33,3.25)(7.87,3.17)(9,3) 

\psecurve[linewidth=.5pt](.9,.8)(1,1)(1.13,1.5)(1.2,2) 
\psecurve[linewidth=.5pt](8.65,1)(8.7,1.2)(9,2)(9,3)(9.15,3.7)(9.2,3.9) 

\pscurve[linewidth=.5pt,linestyle=dotted](-.5,.4)(.5,.8)(1,1)(2,1.2)(4,1.5)(6,1.5)(6.5,1.5)(8.7,1.2)(9.2,1)(9.7,.75) 

\pscurve[linewidth=.5pt,linecolor=blue,linestyle=dotted](-.5,3.4)(.5,3.8)(.75,3.9)(3,4.4)(5.3,4.5)(8,4.2)(9.2,3.9)(9.5,3.8)(9.9,3.65) 

\psdot[dotsize=2pt](8.87,1.6) \rput(9.2,1.5){\small $x$}
\psdot[dotsize=2pt](9,2) \rput(9.2,2){\small $z$}
\psdot[dotsize=2pt](9,3) \rput(9.2,3){\small $w$}
\psdot[dotsize=2pt](9.1,3.5) \rput(9.3,3.5){\small $y$}

\psdot[dotsize=2pt](1,1) \rput(1,.5){\small $b(x_1)$}
\psdot[dotsize=2pt](2,1.2) \rput(2,.7){\small $b(x_i)$}
\psdot[dotsize=2pt](6,1.5) \rput(5.8,1){\small $b(x_j)$}
\psdot[dotsize=2pt](6.5,1.5) \rput(6.6,1){\small $b(x_l)$}
\psdot[dotsize=2pt](8.7,1.2) \rput(8.7,.7){\small $b(x)$}

\psdot[dotsize=2pt](.75,3.9) \rput(.75,4.3){\small $t_1$}
\psdot[dotsize=2pt](3,4.4) \rput(3,4.8){\small $t_i$}
\psdot[dotsize=2pt](5.3,4.5) \rput(5.3,4.9){\small $t_j$}
\psdot[dotsize=2pt](8,4.2) \rput(8,4.6){\small $t_l$}

\psdot[dotsize=2pt](2.2,4) \rput(1.9,4){\tiny $y_i$}
\psdot[dotsize=2pt](5.9,4) \rput(5.9,4.2){\tiny $y_j$}
\psdot[dotsize=2pt](6.8,4) \rput(6.8,4.2){\tiny $y_l$}

\psdot[dotsize=2pt](2.8,1.7) \rput(2.8,1.5){\tiny $x_i$}
\psdot[dotsize=2pt](5.5,1.9) \rput(5.5,1.7){\tiny $x_j$}
\psdot[dotsize=2pt](7.5,1.8) \rput(7.5,1.6){\tiny $x_l$}

\rput(1.1,3.3){\tiny $\lambda^w$}
\rput(1.4,2.3){\tiny $\lambda^z$}

\rput(2.8,3.5){\tiny $\lambda_i^w$}
\rput(2.8,2.5){\tiny $\lambda_i^z$}

\rput(5.53,3.7){\tiny $\lambda_j^w$}
\rput(5.3,2.7){\tiny $\lambda_j^z$}

\rput(7.59,3.5){\tiny $\lambda_l^w$}
\rput(7.55,2.5){\tiny $\lambda_l^z$}

\rput(-.3,3.9){\small $\gF(\partial \D{D}(0,R))$}
\rput(.2,.9){\small $\go$}

\end{pspicture}

\caption{Illustration of the curves and points in the proof of \refP{P:bracket-continuous}. 
The black curves lie in $X_\go$ while the blue curves lie in $U$. The green Jordan arcs denote the curve 
$\lambda_i^z$, $\lambda_i^w$, \dots.}
\label{F:foliation}
\end{figure}
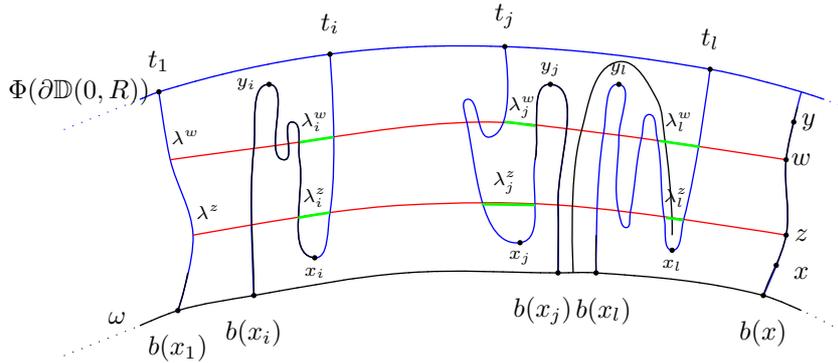

Let us fix $y_i \in [x_i, b(x_i)]$, for $i\geq 1$, such that $y_i \to y$ as $i \to \infty$. 
Let $t_i$ denote the unique point in $\gamma_i \cap \gF(\partial \D{D}(0,R))$.
Then, $\gamma_i= [t_i, b(t_i)]=[t_i, b(x_i)]$, for all $i\geq 1$. 

The curve $\gl^w((0,1))$ divides $E$ into two connected components. 
As $y_i \to y$, $x_i \to x$, and $x < w < y$, there must be $i_1\geq 1$ such that for all $i\geq i_1$, 
$x_i$ and $y_i$ lie in distinct components of $E \setminus \lambda^w((0,1))$.
In particular, for every $i\geq i_1$, both of the curves $[b(x_i), x_i]$ and $[x_i, t_i]$ meet the curve $\lambda^w((0,1))$. 
It follows that for $i \geq i_1$, the sets $C_{i,1}=[b(x_i), x_i] \cap \lambda^w((0,1))$ and 
$C_{i,2}= [x_i, t_i] \cap \lambda^w((0,1))$ are non-empty compact sets in $\lambda^w((0,1))$.
Since $C_{i,1} \cap C_{i,2} = \emptyset$, there must be a Jordan arc 
\[\lambda_i^w: [0,1] \to \lambda^w((0,1))\] 
such that 
\[\lambda_i^w(0) \in C_{i,1}, \quad \lambda_i^w(1) \in C_{i,2}, 
\quad \lambda_i^w((0,1)) \cap (C_{i,1} \cup C_{i,2})= \emptyset.\]
Note that the last property in the above equation implies that $\lambda_i^w((0,1)) \cap \gamma_i=\emptyset$. 
Then, the union of the Jordan arcs $\lambda_i^w([0,1])$ and $[\lambda_i^w(0), \lambda_i^w(1)]$ forms a 
Jordan curve in the plane. Let $W_i$ denote the bounded component of the complement of that Jordan curve. 
Thus, we have 
\begin{equation}\label{E:P:bracket-continuous-7}
\partial W_i=\lambda_i^w((0,1)) \cup [\lambda_i^w(0), \lambda_i^w(1)].
\end{equation}
and 
\begin{equation}\label{E:P:bracket-continuous-3}
W_i \cap \gamma_i = \emptyset.
\end{equation}

The closure of the Jordan domain $W_i$ is homeomorphic to $\ol{\D{D}(0,1)}$. 
Moreover, as $x_i$ belongs to $[\lambda_i^w(0), \lambda_i^w(1)]$, $\ol{W_i}$ contains $x_i$ and $\lambda_i^w([0,1])$. 
Since $x_i$ and $\lambda_i^w([0,1])$ lie on distinct components of $E \setminus \lambda^z((0,1))$, 
the curve $\lambda^z((0,1))$ must divide $\ol{W_i}$ into at least two connected components. 
It follows that there is a Jordan arc 
\[\lambda_i^z: [0,1] \to \ol{W_i} \cap \lambda^z((0,1))\]
such that 
\begin{equation}\label{E:P:bracket-continuous-4}
\lambda_i^z(0) \in [\lambda_i^w(0), x_i], \quad \lambda_i^z(1) \in [x_i, \lambda_i^w(1)], \quad 
\lambda_i^z((0,1)) \subset W_i,
\end{equation}
and $\lambda_i^z([0,1])$ divides $\ol{W_i}$ into two components, one of which contains $x_i$ and the other one contains 
$\lambda^w_i([0,1])$. 

We claim that for every $i\geq i_1$, the Jordan domain $W_i$ enjoys the following property: 
\begin{equation}\label{E:P:bracket-continuous-8}
\text{if } w \in \lambda^z_i((0,1)) \cap X, \quad \text{ then } \quad [b(w), w] \cap \lambda^w_i((0,1)) \neq \emptyset. 
\end{equation} 
To see this, fix an arbitrary $w \in \lambda^z_i((0,1)) \cap X$. By \refE{E:P:bracket-continuous-4}, $w\in W_i$,  
but $b(w) \notin \ol{W_i}$. Therefore, the Jordan arc $[w, b(w)]$ must cross 
$\partial W_i= \lambda_i^w((0,1)) \cup [\lambda_i^w(0), \lambda_i^w(1)]$.
However, $[w, b(w)] \cap [\lambda_i^w(0), \lambda_i^w(1)]=\emptyset$. 
That is because, $[b(w), w] \subset b^{-1}(b(w))$ and $[\lambda_i^w(0), \lambda_i^w(1)] \subset \gamma_i$, while 
$b^{-1}(b(w))$ and $\gamma_i= b^{-1}(b(x_i)$ are either disjoint or identical. 
But, $w$ belongs to $b^{-1}(b(w))$, and by \refE{E:P:bracket-continuous-3}), $w \notin \gamma_i$ (since $w\in W_i$). 
This completes the proof of the claim. 

Since all the points $\lambda_i^w(0)$, $\lambda_i^w(1)$, $\lambda_i^z(0)$, and $\lambda_i^z(1)$ 
lie on $\gamma_i$, we may compare them using the linear order on $\gamma_i$. We have 
\begin{equation}\label{E:P:bracket-continuous-0}
b(x_i) <\lambda_i^w(0) < \lambda_i^z(0) < x_i <  \lambda_i^z(1) < \lambda_i^w(1) < t_i.
\end{equation}
Since $\lambda_i^w(0)$ and $\lambda_i^w(1)$ belong to $\lambda^w((0,1)) \cap \gamma_i$, we must have 
\begin{equation}\label{E:P:bracket-continuous-5}
\lim_{i \to \infty} \lambda_i^w(0)= \lim_{i \to \infty} \lambda_i^w(1)= w.
\end{equation}
Similarly, as $\lambda_i^z(0)$ and $\lambda_i^z(1)$ belong to $\lambda^z((0,1)) \cap \gamma_i$, we must have 
\begin{equation}\label{E:P:bracket-continuous-6}
\lim_{i \to \infty} \lambda_i^z(0)= \lim_{i \to \infty} \lambda_i^z(1)= z
\end{equation}
In particular, we have 
\begin{equation}\label{E:P:bracket-continuous-1}
\lim_{i \to \infty} \diams(\lambda_i^z([0,1])) = 0.
\end{equation}
Moreover, since $x_i \in [\lambda_i^z(0), \lambda_i^z(1)]$, 
\begin{equation}\label{E:P:bracket-continuous-2}
\liminf_{i \to \infty} \diams([\lambda_i^z(0), \lambda_i^z(1)]) \geq \ds(z, x) > 0.
\end{equation}

Now note that at least one of the following three possibilities occurs:
\begin{itemize}
\item[(a)] there are infinitely many $i$ with $\lambda^z_i ([0,1]) \cap X = \emptyset$, 
\item[(b)] there are infinitely many $i$ with $\{\lambda^z_i (0), \lambda^z_i(1)\} \cap X \neq \emptyset$, 
\item[(c)] there are infinitely many $i$ with $\lambda^z_i((0,1)) \cap X \neq \emptyset$.  
\end{itemize}
Below we show that each of the above scenarios leads to a contradiction. 

If (a) occurs, let $(i_k)_{k\geq 1}$ be an increasing sequence such that $\lambda^z_{i_k} ([0,1]) \cap X = \emptyset$. 
In particular, the points $\lambda_{i_k}^z(0)$ and $\lambda_{i_k}^z(1)$ do not belong to $X$. 
Then, the Jordan arc $[\lambda^z_{i_k}(0), \lambda^z_{i_k}(1)]$ does not meet $X$, and hence, it is a hyperbolic 
geodesic in $U$. 
Therefore, according to the Gehring-Hayman theorem discussed before the proof, for $k\geq 1$, we must have 
\[\diams([\lambda^z_{i_k}(0), \lambda^z_{i_k}(1)]) \leq C_{GH} \diams(\lambda_{i_k}^z([0,1])).\]
This contradicts the limiting behaviours in Equations \eqref{E:P:bracket-continuous-1} 
and \eqref{E:P:bracket-continuous-2}. 

If (b) occurs, let $(i_k)_{k\geq 1}$ be an increasing sequence with 
$\{\lambda^z_{i_k} (0),\lambda^z_{i_k}(1)\} \cap X \neq \emptyset$. 
By \refE{E:P:bracket-continuous-0}, we must have $\lambda_{i_k}^z(0) \in X$. 
Therefore, by property (v) in the definition of hairy Cantor sets, and \refE{E:P:bracket-continuous-6},
 $[\lambda_{i_k}^z(0), b(x_{i_k})]$ converges to $[z, b(x)]$. 
On the other hand, by \refE{E:P:bracket-continuous-0}, we have $\lambda_{i_k}^w(0) \in [\lambda_{i_k}^z(0), b(x_i)]$, 
and by \refE{E:P:bracket-continuous-5}, $\lambda_{i_k}^w(0) \to w$, But $w\notin [z, b(x)]$, which is a contradiction. 

If (c) occurs, let $(i_k)_{k\geq 1}$ be an increasing sequence with $\lambda^z_{i_k}((0,1)) \cap X \neq \emptyset$.
Let $z_{i_k} \in \lambda^z_{i_k}((0,1)) \cap X$. 
By \refE{E:P:bracket-continuous-8}, $[z_{i_k}, b(z_{i_k})] \cap \lambda^w_{i_k}((0,1))\neq \emptyset$.  
Let $w_{i_k} \in [z_{i_k}, b(z_{i_k})] \cap \lambda^w_{i_k}((0,1))$. 
As $w_{i_k} \in \lambda^w_{i_k}((0,1))$, from \refE{E:P:bracket-continuous-5}, we conclude that $w_{i_k} \to w$ 
as $k\to \infty$. 
On the other hand, since $z_{i_k} \in X$, by property (v) in the definition of hairy Cantor sets, we have 
$[z_{i_k}, b(z_{i_k})] \to [z, b(z)]$.
But, $w_{i_k} \in [z_{i_k}, b(z_{i_k})]$ and $w_{i_k} \to w \notin [z, b(z)]$. This is a contradiction.
\end{proof}

\begin{proof}[Proof of \refT{T:uniformization}]
By virtue of \refT{T:straight-hairy-homeomorphic}, it is sufficient to prove that every hairy Cantor set in 
the plane is ambiently homeomorphic to a straight hairy Cantor set in the plane.
Let $X$ be an arbitrary hairy Cantor set, with base Cantor set $B$, that is, $B$ is the closure of the set of point 
components of $X$. 
By \refP{P:every_hcs_has_base}, $X$ admits a base curve, that is, a Jordan curve $\go$ such that $X \cap \go= B$ and the 
bounded component of $\D{R}^2 \setminus \go$ does not meet $X$. 
Then we define the sets in \refE{E:S:fixed-notations}. 

Let $A_2$ denote the closed annulus defined by \refE{E:A_R}.
The base map $b: X \to B$ extends to the map $b:A_2 \to \go$; see \refE{E:base-map-extended}.
By \refP{P:b-is-continuous}, $b:A_2 \to \go$ is continuous. 
On the other hand, following the discussions in \refS{S:height-base}, we may consider a Whitney map $\mu$ 
for the set $A_2$. Then, we define 
\[h:A_2 \to [0, \infty), \quad h(z) = \mu([z, b(z)]). \]
By virtue of \refP{P:bracket-continuous} and properties of Whitney maps, $h:A_2 \to [0, \infty)$ is continuous. 
Moreover, if $b(z) = b(w)$ and $h(z) = h(w)$ for some $z$ and $w$ in $A_2$, by the properties of height functions, 
we have $[z, b(z)] = [w, b(z)]$, which implies $z = w$.

Let $I \subseteq \go$ be a Jordan arc such that $b(X)=B \subset I$. 
Consider a homeomorphism 
\[u: I \to \big\{ (x,y) \in \D{R}^2 \mid y=0, x \in [0,1]\big\}.\]
Then, we may define the map 
\[\Psi : \{z \in A_2 \mid b(z) \in I \} \to \{ (x,y) \in \D{R}^2 \mid y \geq 0, x \in [0,1]\}\]
according to  
\[\Psi(z) = (u(b(z)), h(z)).\]
By the above paragraph, $\Psi$ is continuous and injective. 
As $\{ z \in A_2 \mid b(z) \in I \}$ is a compact set, $\Psi$ must be a homeomorphism from its domain onto its image. 
Since the domain of $\Psi$ is a Jordan curve, its image must be also a Jordan curve.
Therefore, one may extend $\Psi$ to a homeomorphism of the plane.

There remains to show that $\Psi(X)$ is a straight hairy Cantor set. 
To prove that, we employ \refL{L:straight_HCS_is_HCS}. 
Let $C= u(B)$. Since $u$ is a homeomorphism, and $B$ is a Cantor set of points, $C$ must be a Cantor set of points. 
Define $l: C \to [0, \infty)$ according to 
\[l(x)= \max \{y \in \D{R} \mid (x, y) \in \Psi(X)\}.\]
As $\Psi(X)$ is compact, $l(x)$ is well-defined and non-negative for all $x \in C$. 
We have 
\[\Psi(X)= \{(x,y) \in \D{R}^2 \mid x \in C, 0 \leq y \leq l(x)\}.\]

We need to verify the three conditions in \refL{L:straight_HCS_is_HCS}.
Evidently, $\Psi(X)$ is compact, so we have item (i). 
By property (iii) in the definition of hairy Cantor sets, $X\setminus B$ is dense in $X$. In particular, the closure of 
$X\setminus B$ contains $B$. This implies that the closure of $\Psi(X\setminus B)$ contains $\Psi(B)=C$.
In particular, we have item (ii) in \refL{L:straight_HCS_is_HCS}. 
Property (iv) in the definition of hairy Cantor sets directly implies item (iii) in \refL{L:straight_HCS_is_HCS}.
Therefore, by \refL{L:straight_HCS_is_HCS}, $Y$ is a straight hairy Cantor set. 
\end{proof}

\begin{proof}[Proof of \refT{T:uniformization-abstract}]
Fix a compact metric space $X$ which satisfies axioms $A_1$ to $A_4$, and $A_6'$. Let $B$ denote the base Cantor set of $X$, 
and $b : X \to B$ its base map.

By \refP{P:X-admits-height}, there is a height function $h:X \to [0, \infty)$, which is continuous on $X$, injective on 
any connected component of $X$, and $h^{-1}(0)=B$. 

Let $C \subset \D{R}$ be a Cantor set, and let $g :B \to C$ be an arbitrary homeomorphism.
Define the map $\Phi : X \to C \times [0, \infty)$ as 
\[\Phi(x) = (g(b(x)), h(x)).\]
The map $\Phi$ is injective. That is because, 
if $g(b(x_1)) = g(b(x_2))$ and $h(x_1) = h(x_2)$, for some $x_1$ and $x_2$ in $X$, then $x_1$ and $x_2$ belong to 
the same connected component of $X$, and we have $[x_1, b(x_1)] = [x_2, b(x_2)]$. This implies that $x_1 = x_2$.

The map $\Phi$ is continuous and injective on a compact set.
Therefore, it is a homeomorphism onto its image. We claim that $\Phi(X)$ is homeomorphic to a straight hairy Cantor set. 
To show that, we apply \refP{P:straight-AHCS-is-homeo-SHCS} to the set $Z=\Phi(X)$. 

Consider the function $l : C \to [0, \infty)$ such that 
\[\Phi(X) = \{ (x, y) \mid x \in C, 0 \leq y \leq l(x) \}.\]
As $\Phi$ is a homeomorphism, and $X$ is compact, $\Phi(X)$ must be compact. 

Define the sets 
\[E_b = \{(x, 0) \mid x \in C, l(x) \neq 0\} ,\quad E_p = \{(x, l(x)) \mid x \in C, l(x) \neq 0\}.\]
By axiom $A_6'$, $E_b \cup E_p$ is dense in $\Phi(X)$.
For any $\gep>0$, 
\[\ol{E_b} \cap \{(x, y) \mid x \in C, \gep \leq y \leq l(x)\} = \emptyset,\]
which implies that 
\[ \ol{E_p} \supseteq \{(x, y) \mid x \in C, \gep \leq y \leq l(x)\}. \]
Therefore, 
\[\ol{E_p} \supseteq \bigcup_{\gep > 0}\{(x, y) \mid x \in C, \gep \leq y \leq l(x)\} = \{(x, y) \mid x \in C, 0 < y \leq l(x)\}, \]
which implies that  
\[ \ol{E_p} \supseteq \{(x, y) \mid x \in C, 0 \leq y \leq l(x), l(x) \neq 0\} \supseteq E_b. \]
Therefore, $E_p$ is dense in $X$.
This completes the proof of the theorem. 
\end{proof}

Axiom $A_6'$ is stronger than axiom $A_6$. But it turns out that any compact set $X \subset \D{R}^2$ which 
satisfies axioms $A_1$ to $A_6$, also satisfies axiom $A_6'$. 
We state the following corollary for future application. 

\begin{cor}
For any hairy Cantor set $X \subset \D{R}^2$ the set of peak points of $X$ is dense in $X$.  
\end{cor}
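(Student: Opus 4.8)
The plan is to reduce the statement to the model case of a straight hairy Cantor set and then read it off from results already proved. Recall from \refS{S:height-base} that for a set satisfying axioms $A_1$--$A_4$ the base Cantor set, the base map and the peak map are defined purely in terms of the topology: the base Cantor set is the closure of the union of the point components, the base of an arc component is its unique end point lying on the base Cantor set, and the peak of a component is the unique point of that component which is not its base (a point component being at once a base point and a peak point). Consequently, if $f$ is a homeomorphism between two such sets, then $f$ sends point components to point components, arc components to arc components, the base Cantor set onto the base Cantor set, and hence conjugates the peak maps. In particular $f$ maps the set of peak points bijectively onto the set of peak points, so the property ``the peak points are dense'' is preserved by homeomorphisms.

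Next I would identify the peak points of a straight hairy Cantor set. If $Z = \{(x,y)\in\D{R}^2 \mid x\in C,\ 0\le y\le l(x)\}$ is a straight hairy Cantor set with length function $l\colon C\to[0,+\infty)$, then $Z\cap(\{x\}\times\D{R}) = \{x\}\times[0,l(x)]$ for each $x\in C$, while for $x\neq x'$ in $C$ the corresponding vertical segments lie in distinct complementary intervals of $C$ and are therefore separated. Hence the connected components of $Z$ are exactly the segments $\{x\}\times[0,l(x)]$, $x\in C$, each being a single point precisely when $l(x)=0$. The base Cantor set of $Z$ is $C\times\{0\}$, so the peak of the component over $x$ is the point $(x,l(x))$. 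Thus the set of peak points of $Z$ is exactly $\{(x,l(x)) \mid x\in C\}$, which by \refP{P:dense-ends}(ii) is dense in $Z$.

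Finally I would combine these two observations with the uniformization theorem. By \refT{T:uniformization}, any hairy Cantor set $X\subset\D{R}^2$ is ambiently homeomorphic to a straight hairy Cantor set $Z$; fix a homeomorphism $f$ of the plane with $f(X)=Z$. Then $f|_X\colon X\to Z$ is a homeomorphism carrying peak points onto peak points by the first paragraph, and the peak points of $Z$ are dense in $Z$ by the second paragraph; since homeomorphisms preserve density, the set of peak points of $X$ is dense in $X$. I do not expect a genuine obstacle here: all the substantive work has been absorbed into \refT{T:uniformization} and \refP{P:dense-ends}, and the only point that needs care is the elementary verification that ``being a peak point'' is a topological invariant, which was already noted in \refS{S:height-base}.
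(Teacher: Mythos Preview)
Your proof is correct and follows essentially the same route as the paper: apply \refT{T:uniformization} to reduce to a straight hairy Cantor set and then invoke \refP{P:dense-ends}(ii), using that the notion of peak point is topological. The paper's version is terser, but your added explanations (that homeomorphisms carry peak points to peak points, and that the peak points of a straight model are exactly the points $(x,l(x))$) are implicit there and make the argument self-contained.
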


\begin{proof}
By \refT{T:uniformization}, $X$ is ambiently homeomorphic to a straight hairy Cantor set. 
By \refP{P:dense-ends}, for every straight hairy Cantor set, the set of peak points is dense in that straight hairy Cantor set. 
Therefore, the set of peak points of $X$ must be dense in $X$.
\end{proof}

In \cite{DeRo14}, the authors propose a topological model for the attractor (post-critical set) of a especial class of infinitely satellite 
renormalisable maps. The main object in that paper is built as the intersection of a nest of plain domains. However, they do not study 
the topological features of the model presented in the paper. One may see that the topological model presented in that paper is a Hairy 
Cantor set. 

\subsection*{Acknowledgements.} The first author acknowledges funding from EPSRC(UK) -
grant No. EP/M01746X/1 - rigidity and small divisors in holomorphic dynamics.
The second author acknowledges funding from ERC advanced grant No 339523 -- Rigidity and global deformations in dynamics.

\bibliographystyle{amsart}
\bibliography{Data}
\end{document}